\documentclass{amsart}[draft]
\usepackage{preambleFL}
\usepackage{graphicx} 
\usepackage[all]{xy}
\usepackage{enumerate}

\newcommand{\mI}{\mathcal{I}}

\newcommand{\sB}{\mathscr{B}}
\newcommand{\sD}{\mathscr{D}}
\newcommand{\sE}{\mathscr{E}}
\newcommand{\sF}{\mathscr{F}}
\newcommand{\sV}{\mathscr{V}}
\newcommand{\sI}{\mathscr{I}}

\newcommand{\sW}{\mathscr{W}}

\newcommand{\bD}{\mathbb{D}}

\newcommand{\bj}{\mathbf{j}}
\newcommand{\bk}{\mathbf{k}}
\newcommand{\bm}{\mathbf{m}}
\newcommand{\bn}{\mathbf{n}}

\newcommand{\bt}{\mathbf{t}}

\newcommand{\Ek}{\sE^{\boxtimes k}}

\newcommand{\rtarr}{\to}

\newcommand{\comm}{\ensuremath{\mathscr{C\mkern-3mu}omm}}
\newcommand{\asso}{\ensuremath{\mathscr{A\mkern-3mu}ss}}
\newcommand{\fin}{\ensuremath{\mathscr{F}_{>0}}}
\newcommand{\fins}{\ensuremath{\mathscr{F}}}

\newcommand{\SI}{\Sigma}
\newcommand{\si}{\sigma}
\newcommand{\LA}{\Lambda}

\newcommand{\letter}{x}
\newcommand{\e}{X}

\newcommand{\Rmod}{\mathrm{RMod}}
\newcommand{\Lmod}{\mathrm{LMod}}
\newcommand{\Mon}{\mathrm{Mon}}

\title{On the Kelly monoidal structure of $\Lambda$-sequences and unital operads}
\author{Aowen Fan and Foling Zou}
\date{}

\begin{document}
\begin{abstract}
  Let $\Lambda$ be the category of based finite sets $\bn$ and based injections. We study properties of monoids and modules in
  $\Lambda$-sequences under the Kelly monoidal structure. In particular, we show that
  the forgetful functor from right modules in $\Lambda$-sequences to right modules in
  symmetric sequences is an isomorphism. We show that any compatible lower data
  extends to a normal oplax monoidal structure and use this to establish a
  universal normal oplax monoidal structure on $\Lambda$-sequences extending the
  Kelly product, identifying unital operads to monoids in unital $\Lambda$-sequences
  for a general symmetric monoidal category $\sV$. We also establish a closed
  monoidal localization theorem.
\end{abstract}
\maketitle

\tableofcontents

\section*{Introduction}
The notion of operads was introduced by Peter May in the 1970s \cite{MayGeo}, and has since been
prevalent in studying algebraic structures in the context of topological spaces,
spectra, algebras and more. It was first observed by Kelly \cite{Kelly0} that the structure of an operad in a
closed symmetric monoidal category $\sV$ is the same as a monoid in symmetric
sequences in $\sV$, under a product $\odot_{\Sigma}$ he defined. This product is also
called the composition product (see \cite{MZZ} for a historical account). This perspective
is convenient for studying model structures on operads and
their algebras, or for forming bar constructions \cite{BMAx, BergMoerOp2, Kro,
  MZZ, PS}. 

An operad $\mathscr{C}$ in a symmetric monoidal category
$(\sV, \otimes, \mI)$ is called \emph{based} if it comes with a map $\eta:\mI \to \sC(0)$
and \emph{unital} if $\mathscr{C}(0) = \mI$.
Unital operads are relevant in the context of based spaces, spectra or unital
$k$-algebras, as they build in the pre-assigned based condition of algebras into
the combinatorics of the operad. In \cite{MZZ}, May, Zhang and the second named
author developed a unital version of Kelly's theorem: replacing symmetric
sequences by $\Lambda$-sequences, unital (resp.\ based) operads in $\sV$ can
be identified with monoids in unital $\Lambda$-sequences (resp.\ $\Lambda$-sequences)  in $\sV$ 
under the Kelly product $\odot_{\Lambda}$ (see \autoref{sec:prelim} for definitions).

\medskip
\textbf{Modules.}
The identification of operads as monoids opens up the perspective of left,
  right modules and bar constructions. In \autoref{sec:modules}, we study
  properties of monoids and modules in $\Sigma$-objects and $\Lambda$-objects in a closed
  symmetric monoidal category $\sV$.
  In particular, we show the following result in \autoref{pullback}.
\begin{thm}\label{pullback-intro}
  Let $\sC$ be a based operad.  Then the forgetful map
  from right $\sC$-modules in $\Lambda$-objects to right $\sC$-modules in
  $\Sigma$-objects is an isomorphism.
\end{thm}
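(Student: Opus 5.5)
The plan is to show that a right $\sC$-module structure on a $\Sigma$-object automatically contains the $\Lambda$-structure, and that this structure is unique. Fix notation. A $\Lambda$-object $X$ is a functor $\Lambda^{op}\to\sV$ (contravariant in based injections). A right $\sC$-module in $\Lambda$-objects is a map $X\odot_\Lambda \sC\to X$ satisfying the monoid-action axioms. Dually for $\Sigma$. The forgetful functor is induced by the monoidal comparison $X\odot_\Sigma \sC\to X\odot_\Lambda \sC$. This comparison is a quotient: the coend over $\Lambda$ is a quotient of the coend over $\Sigma$.

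In terms of operations, a right $\sC$-module structure on a $\Sigma$-object $X$ amounts to equivariant maps
\[
\gamma\colon X(k)\otimes \sC(j_1)\otimes\cdots\otimes \sC(j_k)\to X(j_1+\cdots+j_k)
\]
satisfying associativity and unit. For a based operad, the basepoint $\eta\colon\mI\to\sC(0)$ together with the unit $\mI\to\sC(1)$ gives, for each based injection $\phi\colon \mathbf{m}\to\mathbf{n}$, a map $\phi^*\colon X(n)\to X(m)$. To define it, plug the unit $1\in\sC(1)$ into the slots in the image of $\phi$, plug $\eta$ into the slots not in the image, and then apply the permutation that puts the image in order. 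This is exactly the way a based operad is itself a $\Lambda$-object. Associativity and equivariance of $\gamma$ show that $\phi\mapsto\phi^*$ is functorial. So every right $\sC$-module in $\Sigma$-objects carries a canonical $\Lambda$-structure extending its $\Sigma$-structure. This gives the candidate inverse functor.

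Next I check three things.

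\begin{enumerate}[(1)]
\item With this $\Lambda$-structure, $\gamma$ factors through the quotient $X\odot_\Sigma\sC\to X\odot_\Lambda\sC$. Concretely, $\gamma$ must be compatible with the coend relations $\phi^*x\otimes c \sim x\otimes \phi_*c$, where $\phi_*$ inserts $\eta$ into the $\sC$-variables. Unwinding, $\gamma(\phi^*x;c_1,\dots)$ is $\gamma$ applied to $x$ with units and basepoints composed with the $c_i$. By associativity, together with the unit law $\gamma(1;c)=c$ and $\gamma(\eta;\ )=\eta$, this equals $\gamma(x;\phi_*(c_1,\dots))$.
\item The resulting action is a right module in $\Lambda$-objects. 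Associativity and unit are inherited, because $X\odot_\Lambda\sC\odot_\Lambda\sC$ is a quotient of the $\Sigma$ version, so the diagrams can be checked after precomposing with an epimorphism.
\item Uniqueness. If $X$ already had a $\Lambda$-structure making it a $\Lambda$-module, that structure must agree with the canonical one. In $X\odot_\Lambda\sC$ we have $\phi^*x\otimes(1,\dots,1)\sim x\otimes\phi_*(1,\dots,1)$. Applying the action and the unit axiom gives $\phi^*x=\gamma(x;\text{units and }\eta\text{'s})$. Hence the forgetful functor is injective on objects.
\end{enumerate}

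Morphisms are handled similarly. A $\Sigma$-module map commutes with $\gamma$, hence with the canonical $\phi^*$, so it is a $\Lambda$-map. Together these show the forgetful functor is bijective on objects and morphisms, hence an isomorphism.

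The main obstacle is step (1): making the coend relations of $\odot_\Lambda$ precise enough to verify the factorization. This requires describing $(\sC^{\odot k})$ as a $\Lambda$-object via the Day-type structure, using the earlier definitions in \autoref{sec:prelim}. I would first reduce to the generating injections of $\Lambda$, namely permutations and the elementary inclusions $\mathbf{n}\to\mathbf{n+1}$. The relation for permutations is already built into $\odot_\Sigma$. For the elementary inclusions, the check reduces to a single instance of operad associativity with $\eta$.
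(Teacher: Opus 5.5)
Your proposal is correct and is essentially the paper's own proof. The paper also defines $\sD(\sigma_i)$ by feeding $\eta$ into the $i$-th slot and units into the others. It then checks the extra coend relation for injections in the $k$-variable by reducing to permutations plus elementary injections and using associativity of the $\Sigma$-module action. Your uniqueness and morphism arguments spell out what the paper leaves implicit.

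The one verification you do not list is that the factored action $X\odot_\Lambda\sC\to X$ is natural for injections in the output variable, not just for permutations. It goes through by the same associativity argument, because the $\Lambda$-structure on $\sC^{\boxtimes k}$ is itself given by plugging in $\eta$.
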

In particular (\autoref{ex-I1-comm}), taking $\sC = \sI_1$ where $\sI_1(\mathbf{0})= \sI_1(\mathbf{1})=\mI$ and $\sI_1(\bn)=\emptyset$ for
$n\geq2$, we recover \cite[5.1.8]{Fresse0}; taking $\sC$ to be the commutativity
operad, we cover \cite[4.10]{MZZ} and \cite[5.1.6]{Fresse0}.

\smallskip
We also found an interesting construction of new operads. For an operad $\sC$ in
a Cartesian monoidal category $\sV$, we construct a new operad $\sB$ with explicit
terms
\begin{align*}
  \sB(0) & = \sC(0), \\
  \sB(1) & = \sC(0) \sqcup \sC(1), \\
 \text{ and } \sB(n) & = \sqcup_{S \subset \{1,2,\cdots,n\}} \sC(|S|).
\end{align*}
The category of $\sB$-algebras seems ridiculously rich: each $\sB$-algebra is
simultaneously a $\sC$-algebra and a commutative algebra; both
$\sC$-algebras and commutative algebras are faithful subcategories of $\sB$-algebras
(see \autoref{prop:base-operad} and remarks after).

\smallskip
From an operad $\sC$ in $\sV$, two associated $\sV$-enriched symmetric monoidal
categories have been studied in the literature: the permutative envelop
$\overline{\sC}$ and the category of operators $\widehat{\sC}$.
They are constructed in explicit terms. In \cite{MZZ}, it is shown how to
construct $\overline{\sC}$ using the Day convolution and Kelly product;
in \autoref{prop:C-hat-comm}, we show how to construct
$\widehat{\sC}$ using the Day convolution and Kelly product.

The permutative envelop $\overline{\sC}$ has the property that the category of $\sC$-algebras in
$\sV$ id isomorphic to strong symmetric monoidal functors from $\overline{\sC}$
to $\sV$. When $\sV$ is Cartesian monoidal, this is also isomorphic to strong
symmetric monoidal functors from $\widehat{\sC}$ to $\sV$.
However, in homotopy theory it is common to replace the strong symmetric
monoidal condition with the Segal conditions, then $\widehat{\sC}$ becomes more
interesting.
For a subcategory $\Pi \subset \widehat{\sC}$, May--Thomason \cite{MT} uses
a monad  $\widehat{\bC}^{op}_{\Pi}$ on $\Pi[ \mathrm{Top}_{*}]$ to unify Segal's
$\fins$-space approach and May's operadic approach to infinite loop space
machines. Here, $\Pi[ \mathrm{Top}_{*}]$ is the category of covariant functors
from $\Pi$ to based spaces.

We show in \autoref{lem:monad-same} that for $\Sigma \subset \Lambda \subset \overline{\sC}$, the monads
$\overline{\bC}^{op}_{\Sigma}$ and $\overline{\bC}^{op}_{\Lambda}$ are isomorphic to the
monads associated to the monoid $\sC$ in $\Sigma$-objects and $\Lambda$-objects. With
this we show the following result in \autoref{thm:iso} for both $\Sigma$-objects and
$\Lambda$-objects, which gives a complete understanding of right $\sC$-modules.
Note that $\overline{\sC}^{op}$ does not depend on $\Sigma$ or $\Lambda$, so
this also gives another proof of \autoref{pullback-intro}.
\begin{thm}
The category of right $\sC$-modules is isomorphic to the category of $\sV$-enriched functors from $\overline{\sC}^{op}$ to $\sV$.
\end{thm}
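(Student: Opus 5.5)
The plan is to identify both categories with algebras over a single monad. Right $\sC$-modules in $\Sigma$-objects (resp.\ $\Lambda$-objects) are by definition objects $\sM$ with an action $\sM \odot_{\Sigma} \sC \to \sM$ (resp.\ $\odot_{\Lambda}$) satisfying unit and associativity. These are exactly the algebras for the monad $(-)\odot \sC$ on $\Sigma$-objects (resp.\ $\Lambda$-objects). On the other side, $\sV$-enriched functors $\overline{\sC}^{op}\to\sV$ are the algebras over the monad $\overline{\bC}^{op}_{\Pi}$ on $\Pi$-objects, for $\Pi=\Sigma$ or $\Lambda$. This monad is obtained by restricting along the inclusion $\Pi\subset\overline{\sC}$ (bijective on objects) and left Kan extending back, i.e.\ $\overline{\bC}^{op}_{\Pi}X = \overline{\sC}^{op}\otimes_{\Pi^{op}} X$. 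By \autoref{lem:monad-same} these two monads are isomorphic, so their categories of algebras agree. The remaining work is to check that the identifications are natural and compatible.

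First, I will show that precomposition with $\Pi^{op}\to\overline{\sC}^{op}$ is monadic, with left adjoint given by enriched left Kan extension. Since $\Pi\to\overline{\sC}$ is the identity on objects, a standard argument shows that enriched functors $\overline{\sC}^{op}\to\sV$ are equivalently $\Pi^{op}$-functors $X$ with extra action maps $\overline{\sC}(\bm,\bn)\otimes X(\bn)\to X(\bm)$ that are compatible with composition and restrict to the $\Pi$-action. Such data amounts to an algebra structure over the monad $T=U\circ\mathrm{Lan}$. This step uses only that $\sV$ is closed, so that the coends defining the Kan extension exist and commute with $\otimes$.

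Second, I will invoke \autoref{lem:monad-same} to obtain a monad isomorphism $T\cong(-)\odot_{\Pi}\sC$. An isomorphism of monads, meaning one compatible with the units and multiplications, induces an isomorphism (not merely an equivalence) of Eilenberg--Moore categories over the base category. Composing these identifications gives right $\sC$-modules $\cong$ $T$-algebras $\cong$ $\sV$-functors $\overline{\sC}^{op}\to\sV$.

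The main obstacle is making sure \autoref{lem:monad-same} really gives an isomorphism \emph{of monads}, and not just of underlying endofunctors. This requires matching the multiplication $(\sM\odot\sC)\odot\sC\to\sM\odot\sC$, which comes from the operad composition, with the composition in $\overline{\sC}$ (and similarly for the units). If the lemma only provides the endofunctor isomorphism, I would verify the compatibility directly. Using the explicit description $\overline{\sC}(\bm,\bn)=\coprod_{f:\bm\to\bn}\bigotimes_{j}\sC(f^{-1}(j))$, the Kelly product $\sM\odot\sC(\bm)$ is a coend over $\bn$ of $\sM(\bn)\otimes\sC^{\boxtimes n}(\bm)$. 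This gives a summand-by-summand comparison, and on each summand both multiplications are the operadic structure maps followed by block permutations. For $\Pi=\Lambda$ I also need that the $\Lambda$-structure maps (insertion of $\eta:\mI\to\sC(0)$) correspond to composing with the image of the based injections in $\overline{\sC}$. This holds by how $\Lambda\subset\overline{\sC}$ is defined through $\eta$.
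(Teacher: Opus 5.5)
Your proposal is correct and matches the paper's proof: it combines \autoref{prop:iso-alg}, which identifies $\sV$-functors $\overline{\sC}^{op}\to\sV$ with algebras over the monad $\bD^{op}_{\Xi}$ of \autoref{defn:bDop} for $\sD=\overline{\sC}$ and $\Xi=\Sigma$ or $\Lambda$, with \autoref{lem:monad-same}, which identifies that monad with $-\odot\sC$. The differences are minor: the paper gets monadicity from Beck's theorem where you match algebra structures directly with composition-compatible action maps, and the paper only asserts the agreement of units and multiplications that you propose to check summand by summand.
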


\medskip
\textbf{Unital operads as monoids.}
  In order for the Kelly product to be a monoidal product, one uses that the
monoidal structure on $\sV$ is closed and in particular that $\otimes$ commutes with
small colimits. Sometimes one would like to drop this assumption.
For example, $\mathrm{Top}^{op}$ is not closed monoidal and the operads in it are
cooperads in $\mathrm{Top}$.

In \cite[Definitions 2.10-2.12]{ChingComposition}, Ching
generalized the notion of a monoidal structure on a category $\sE$ to a so-called normal oplax monoidal
structure, namely  for all $n \ge 2$ functors
  $$\mu_n : \sE^n \rtarr \sE,$$ and some compatible natural transformations
  $\alpha_{n,l,r}$.
  He then constructed a  normal oplax monoidal structure on symmetric sequences
  in $\sV$ where $\mu_2$ is the Kelly product.
  This way, he could still make sense of monoids and bar constructions.
  In fact, this approach can be generalized to $\Lambda$-sequences (\autoref{rem:Ching}), with
  significant modification due to the base point identifications.
  
  In \autoref{sec:normal-op-lax}, we offer an alternative approach.
  In \autoref{thm:complete-mu}, we show that any compatible data of
  $\mu_n$ and $\alpha_{n,l,r}$ for $n\leq 3$ extends to normal oplax structures, and
  there is a universal one. 
  Via our monoidal localization, this gives the result (\autoref{mainthm}):
  \begin{thm}\label{mainthm-intro}
    There is a universal normal oplax monoidal structure on $\Lambda$-sequences
    where $\mu_2$ and $\mu_3$ are the 2-fold and 3-fold Kelly products.
  \end{thm}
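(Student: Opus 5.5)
The plan is to deduce \autoref{mainthm-intro} from the general extension result \autoref{thm:complete-mu}: any compatible \emph{lower data} extends to a normal oplax monoidal structure, and there is a universal such extension. The lower data consist of $\mu_2,\mu_3$ and the transformations $\alpha_{n,l,r}$ with $n\le 3$. So the work splits into two parts. First, write down this data for $\Lambda$-sequences, using the 2-fold and 3-fold Kelly products. Second, check the compatibility axioms that \autoref{thm:complete-mu} requires.

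For the first part, take $\mu_2=\odot_\Lambda$. Take $\mu_3(X,Y,Z)$ to be the 3-fold Kelly product, i.e.\ the coend describing ``one operation on top, a level of middle operations, a level of bottom operations'', with base-point identifications along $\Lambda$ in every variable. The oplax maps needed are
\[
\mu_3(X,Y,Z)\to \mu_2(\mu_2(X,Y),Z), \qquad \mu_3(X,Y,Z)\to \mu_2(X,\mu_2(Y,Z)),
\]
together with the normality (unit) identifications. I will construct them on the coend generators and check that they respect the relations. When $\sV$ is closed, $\otimes$ commutes with colimits and these maps are isomorphisms. In general they are only canonical comparison maps, coming from pulling tensors out of colimits.

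The key step is the compatibility of this data. I expect that by naturality it reduces to the one diagram on $\mu_3$ involving both routes to $\mu_2\mu_2$. To handle this I would use the paper's monoidal localization theorem. Regard $\Lambda$-sequences as a localization (a reflective subcategory) of a larger category where the corresponding structure is strictly monoidal, or at least where all products are built from Day-convolution-type coends and the axioms hold by the universal property of colimits. Then transport the axioms along the localization. The base-point identifications are exactly what the localization imposes, so $\mu_2,\mu_3$ are the localized products. Normality follows because the unit sequence is local.

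The main obstacle I expect is this compatibility check in nonclosed $\sV$. There, coends do not commute with $\otimes$, and the $\Lambda$-identifications make the 3-fold product a quotient that is not simply iterated. What I would try first is to verify the axioms on representable generators $\Lambda(-,\bn)\otimes V$. Naturality and the fact that all objects are colimits of these would then finish it, since the maps are defined by universal properties of colimits and no exchange of $\otimes$ with colimits is needed. Once compatibility holds, \autoref{thm:complete-mu} supplies the universal extension $\{\mu_n,\alpha_{n,l,r}\}$ with the prescribed $\mu_2$ and $\mu_3$, which is the statement.
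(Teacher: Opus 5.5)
Your skeleton is the paper's. You take $\mu_2=\odot_\Lambda$ and $\mu_3$ the 3-fold Kelly product. For the transformations you take $\alpha_{1,0,0},\alpha_{1,1,0},\alpha_{2,l,0}$ together with $\alpha_{3,0,2}=s_1$ and $\alpha_{3,1,2}=s_2$. Then \autoref{thm:complete-mu}, applied to $\Lambda^{op}[\sV]_{\sI_0}$ (which has pointwise limits), produces the terminal extension, with $\mu_n$ for $n\ge 4$ a limit over $W_n$. The $\alpha_{3,l,0}$ are outputs of that induction, not inputs.

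\textbf{The step you call key is misidentified.} No axiom of \autoref{normaloplax} compares $\alpha_{3,0,2}$ with $\alpha_{3,1,2}$. The blocks $\{1,2\}$ and $\{2,3\}$ are neither disjoint nor nested, and associativity-type coherence is exactly what the limit over $W_n$ absorbs in arity $\ge 4$. The conditions that involve only the lower data are unit conditions, for instance:
\begin{itemize}
\item $\alpha_{3,1,2}\circ\alpha_{2,0,0}=\alpha_{1,0,0}$ on $\mu_2(\sD_1,\sD_2)$;
\item $\alpha_{3,0,2}\circ\alpha_{2,0,0}=\mu_2(\alpha_{1,0,0},\sD_2)$ on $\mu_2(\sD_1,\sD_2)$;
\item $\alpha_{2,2,0}\circ\alpha_{1,0,0}=\alpha_{2,0,0}\circ\alpha_{1,1,0}$.
\end{itemize}

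\textbf{Neither proposed verification method works as stated.}

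\emph{The localization.} The paper's localization is not a reflective localization of $\Lambda$-sequences that imposes base-point identifications. Those identifications are already built into the coends over $\Lambda$. The non-associativity you must control also occurs verbatim for $\Sigma$-sequences in Ching's setting, so it is neither caused nor cured by $\Lambda$. The paper's localization is $L\colon\sV\to L\sV=\sV[\overline{S}^{-1}]$, which inverts the maps $\mathrm{colim}(A_i\otimes X)\to(\mathrm{colim}\,A_i)\otimes X$, applied levelwise. So $\Lambda^{op}[\sV]_{\sI_0}$ is its source, not its target. In the target $\Lambda^{op}[L\sV]_{\sI_0}$ the Kelly product is genuinely monoidal and $s_1,s_2$ become isomorphisms. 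The paper uses $L$ for item (5) of \autoref{mainthm}, via connectivity of $W_n$. To use it for compatibility you would push the diagrams forward and then need $L$ faithful to reflect commutativity back. Your remark that normality holds ``because the unit is local'' is empty: normality is definitional, and $\alpha_{1,l,0}$ need not be invertible.

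\emph{Reduction to representables.} Checking on $\Lambda(-,\bn)\otimes V$ and extending by colimits fails because $\mu_2(\sD_1,-)$ and $\mu_3$ are not cocontinuous. The later variables enter through $k$-fold tensor powers, and without closedness not even the first variable commutes with colimits.

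\textbf{What does work.} Reduce on the source rather than on the inputs. Every map in sight is induced out of a coend of tensors. So two parallel composites agree as soon as they agree on each component $\Lambda(-,\cdots)\otimes\bigl(\sD_1(\bk)\otimes\sD_2(\mathbf{n_1})\otimes\cdots\bigr)$. On each component both composites are the same canonical map: a Yoneda isomorphism followed by the comparison maps of the type that define $s_1$, $s_2$ and $\alpha_{1,0,0}$. This is the verification the paper leaves implicit in \autoref{lowercondition}.
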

  In particular, the adaption of Ching's normal oplax monoidal structure has a
  morphism to our construction. Moreover, all sensible
  normal oplax monoidal structures are equivalent, having the same category of
  monoids~(\autoref{rem:all-same}). Using this structure, we establish the following result in
  \autoref{thm:operad}, answering a question left open in \cite{MZZ}.
\begin{thm}
  Let $\sV$ be a complete and cocomplete symmetric monoidal category.
  There are isomorphisms between categories of unital operads
  in $\sV$  and of monoids in unital $\Lambda$-sequences  in $\sV$;
  between categories of based operads in $\sV$ and of monoids in $\Lambda$-sequences in $\sV$.
\end{thm}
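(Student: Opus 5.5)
Since $\sV$ is not assumed closed, $\odot_\Lambda$ need not be associative up to isomorphism, so monoids must be taken with respect to the universal normal oplax monoidal structure of \autoref{mainthm-intro}. The plan is to show that a monoid in this sense depends only on the low-arity data $\mu_2,\mu_3$ and $\alpha_{n,l,r}$ for $n\le 3$. Those pieces are the $2$- and $3$-fold Kelly products with their explicit comparison maps, and they carry exactly the data of a based (resp.\ unital) operad.

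\textbf{Step 1: reduce monoids to low-arity data.} By the construction in \autoref{thm:complete-mu}, every $\mu_n$ with $n\ge4$ and all higher $\alpha$'s are obtained from the $n\le 3$ data universally. I will first prove the following lemma: a monoid is the same as
\begin{itemize}
\item an object $X$ with unit $\eta\colon I\to X$ and multiplication $m\colon \mu_2(X,X)\to X$,
\item such that the two composites $\mu_3(X,X,X)\to \mu_2(\mu_2(X,X),X)\to \mu_2(X,X)\to X$ and $\mu_3(X,X,X)\to \mu_2(X,\mu_2(X,X))\to\mu_2(X,X)\to X$ agree,
\item together with the unit conditions.
\end{itemize}
This is the oplax analogue of the fact that associativity only needs to be checked in degree three. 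For higher $n$, the map $\mu_n(X,\dots,X)\to X$ is forced by factoring through the $\alpha_{n,l,r}$. Compatibility then follows from the universal property of the extension.

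\textbf{Step 2: translate into operad data.} Next I unpack $\mu_2(X,X)=X\odot_\Lambda X$ levelwise. Because $\otimes$ may fail to commute with colimits, I keep the Kelly product in its unreduced form: a coend over $\Lambda$ of $X(\bk)\otimes X^{\boxtimes k}(\bn)$, where $X^{\boxtimes k}$ is written out explicitly as a coproduct over decompositions rather than via Day convolution. I will identify a map $X\odot_\Lambda X\to X$ with a family of structure maps $\gamma\colon X(k)\otimes X(j_1)\otimes\cdots\otimes X(j_k)\to X(j_1+\dots+j_k)$ satisfying two conditions.
\begin{itemize}
\item Equivariance for $\Sigma_k$ and the $\Sigma_{j_i}$.
\item Compatibility with the $\Lambda$-structure. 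The injections $\bk'\to\bk$ act on $X(k)$, and this must match plugging $\eta\colon I\to X(0)$ into the missing slots. This is exactly the base-point identification in \cite{MZZ}.
\end{itemize}
Likewise, the maps out of $\mu_3(X,X,X)$, the triple Kelly product, which is again a coend, correspond to the doubly-indexed composites. The associativity diagram of Step 1 becomes May's associativity axiom, and the unit diagram becomes the unit axiom.

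\textbf{Step 3: conclude.} Morphisms match on both sides, giving an isomorphism between based operads and monoids in $\Lambda$-sequences. For unital operads, restrict to the full subcategory where $X(0)=I$. I must check that the universal normal oplax structure restricts to unital $\Lambda$-sequences, meaning the Kelly products preserve the condition $X(0)=I$; this is a computation in arity $0$.

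\textbf{Main obstacle.} I expect Step 2 to be the hardest. Without closedness, one cannot reduce the coends using the fact that $\otimes$ preserves colimits. So the identification of maps out of $\mu_3(X,X,X)$ with iterated composition data has to be done by hand, by describing the universal property of the coend directly: a cocone out of each summand, compatible with the $\Lambda$-relations. I will verify this carefully in the presence of base-point identifications, where the injections interact with degenerate (arity-zero) inputs.
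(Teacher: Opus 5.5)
Your proposal is correct. Step 1 coincides with the paper's argument: reducing a monoid to $(m_0,m_2)$ with the associativity and unit diagrams is exactly \autoref{lem:Ching} (Ching's Prop.~3.4), which holds in any normal oplax monoidal category, so you can cite it. If you reprove it for the terminal structure, note that well-definedness of $m_n$ comes from the $\alpha_{n,l,r}$ forming a cone over the connected poset $W_n$, together with the lower-arity monoid identities. It does not come from the universal property of the limit, which controls maps into $\mu_n$, not out of it.

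The genuine difference is in how you verify that an operad gives a monoid. The paper treats the monoid-to-operad direction as a direct check. For the converse it passes to the localization $L\colon\sV\to L\sV$ of \autoref{commutewithtensor}, gets that $L\sC$ is a monoid from \autoref{thm:MZZ} (where $\otimes$ commutes with colimits), and pulls the diagrams of \autoref{lem:Ching} back to $\sV$ using that $L$ is faithful.

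You instead compare the two composites out of $\mu_3(\sC,\sC,\sC)$ summand by summand. This works because coprojections into a coend are jointly epimorphic, and the identifications \autoref{eq:6} and \autoref{eq:7} used in $s_2$ only need $S\otimes-$ to preserve colimits for finite sets $S$. On the summand indexed by $(\tau,\lambda)$, both composites reduce to the same nested composite by May's associativity and equivariance, once the $\Lambda$-action is identified with insertion of the base point. The Kan extension runs over $\Lambda^{op}$ in each input, so you also need $\gamma$ to be natural in the inputs under injections, not merely $\Sigma_{j_i}$-equivariant; this again follows from associativity.

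What each route buys: yours costs an explicit combinatorial check but is self-contained. It uses nothing from \autoref{sec:mono-local} and works for any normal oplax structure with the prescribed $\mu_2$, $\mu_3$ and low-arity $\alpha$'s. It also avoids the delicate point of the paper's shortcut, the faithfulness of $L$. A localization identifies $f$ and $g$ as soon as $tf=tg$ or $fs=gs$ for some inverted $s$ or $t$. So $L$ can be faithful only if every map in $\overline{S}$ is both monic and epic. This fails, for instance, for $\sV=\mathrm{Set}^{op}$ with the cartesian product of $\mathrm{Set}$. There the initial object is the unit $\mI$, so the empty-diagram comparison $\emptyset\to\emptyset\otimes X\cong X$ lies in $S$, and $L\sV$ collapses to a trivial category.
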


\medskip
\textbf{Monoidal localization.}
We study the localization of categories in \autoref{sec:mono-local}.
As a key step of our proof of \autoref{mainthm-intro}, we establish the
following \emph{closed monoidal localization theorem}
(\autoref{commutewithtensor}), which could be of independent interest. 
\begin{thm} \label{thm:monoidal-loc-intro}
  Let $\sV$ be a complete and cocomplete symmetric monoidal category. There
  exists a localization $L: \sV \to L\sV = \sV[\overline{S}^{-1}]$ such that 
\begin{itemize}
\item $L\sV$ is complete, cocomplete and symmetric monoidal;
\item $L$ preserves small limits, colimits and $\otimes$;
\item $\otimes$ commutes with small colimits in $L\sV$.
\end{itemize}
\end{thm}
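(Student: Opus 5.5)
The idea is to localize at exactly the maps that witness the failure of $\otimes$ to commute with colimits, and to do so in a way that keeps everything inside a reflective full subcategory. Choose a small set $S$ of morphisms in $\sV$. For each small diagram $D$ (up to a bounded size) and each object $X$ from a suitable generating set, $S$ contains the canonical comparison maps
\[
\operatorname{colim}_i (X\otimes D_i) \rtarr X\otimes \operatorname{colim}_i D_i .
\]
Let $\overline{S}$ be the closure of $S$ under tensoring with arbitrary objects, small colimits, and 2-out-of-3. Call an object $Z$ \emph{$\overline{S}$-local} if $\sV(f,Z)$ is a bijection for every $f\in\overline{S}$, and define $L\sV$ to be the full subcategory of local objects.

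The plan then has four steps.

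\textbf{Step 1: construct the reflection.} I would show that local objects form a reflective subcategory with reflector $L$, and that $L$ inverts precisely $\overline{S}$. If $\sV$ were locally presentable, this would follow from the standard small object argument. Since we only know that $\sV$ is complete and cocomplete, I would instead define $LZ$ directly as a limit. Concretely, $LZ$ would be the limit over the (essentially small, after the size bound) comma category of maps $Z\to W$ with $W$ local. Local objects are closed under limits because the defining condition is a family of limit conditions. This gives $L$ as a reflector, so $L\sV$ is complete, and $L$ preserves limits because limits of local objects are computed in $\sV$. Cocompleteness of $L\sV$ comes from reflecting colimits: take $L(\operatorname{colim})$, and then $L$ preserves colimits as a left adjoint.

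\textbf{Step 2: monoidal structure.} Because $\overline{S}$ is closed under $-\otimes Y$, the localization is compatible with $\otimes$. Hence $L(f\otimes Y)$ is invertible for all $f\in\overline{S}$, and the formula $LX\,\hat\otimes\, LY := L(X\otimes Y)$ is well defined. The associators, unitors and symmetry descend from $\sV$, and $L$ is strong monoidal by construction.

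\textbf{Step 3: $\otimes$ commutes with colimits in $L\sV$.} A colimit in $L\sV$ is $L\operatorname{colim}$. So $X\,\hat\otimes\,\operatorname{colim}^{L} D_i = L(X\otimes \operatorname{colim} D_i)$, and the comparison map from $L\operatorname{colim}(X\otimes D_i)$ is $L$ applied to a map in $\overline{S}$. It is therefore invertible.

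\textbf{Main obstacle: size.} The class of all comparison maps is large, so the localization might not exist, or the limit defining $LZ$ might not be small. I would first try to resolve this by showing that local objects are closed under limits and that the comparison maps may be restricted to a small set. Only if that fails would I fall back on defining $\overline{S}$ as the class of maps inverted by every functor to local objects, which makes $L$ exist tautologically.
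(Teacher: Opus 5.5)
Your reflective-subcategory route does not work in the stated generality, and Step 1 is where it fails.

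\textbf{Existence of the reflector.} $\sV$ is only assumed complete and cocomplete. So there is no small object argument and no generating set. The limit over the comma category of maps $Z\to W$ with $W$ local is indexed by a large category, and without a solution set it need not exist. Your fallback is circular: it presupposes the local objects rather than producing a reflector.

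Shrinking $S$ to a set does not rescue this. Without presentability a set still need not yield a reflection. Moreover, $\overline{S}$ then no longer contains the comparison map $\mathrm{colim}_i(X\otimes D_i)\to X\otimes\mathrm{colim}_i D_i$ for arbitrary $X$. Tensoring the comparison map for a generator $G$ with $Y$ gives $(\mathrm{colim}_i\, G\otimes D_i)\otimes Y\to(G\otimes\mathrm{colim}_i D_i)\otimes Y$, which is not the comparison map for $G\otimes Y$, precisely because $\otimes$ fails to commute with colimits. Resolving $X$ as a colimit of generators would instead need the comparison map in the other variable, with an arbitrary second factor. So Step 3 breaks.

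\textbf{Limit preservation.} That limits of local objects are computed in $\sV$ shows only that the \emph{inclusion} of local objects preserves limits, not that the reflector does. A reflector is a left adjoint and in general fails to preserve even equalizers. For example, the reflection of $\mathrm{Set}$ onto $\{\emptyset,*\}$ sends the empty equalizer of the two maps $*\rightrightarrows\{c,d\}$ to $\emptyset$, whereas the equalizer of their images is $*$. Nothing in your $\overline{S}$, closed only under colimits, 2-out-of-3 and tensoring, forces $L$ to be left exact.

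\textbf{Monoidal structure.} Step 2 has a related gap. For $L$ to be strong monoidal and for $\hat\otimes$ to be associative, you need $L(f\otimes Y)$ invertible for every map $f$ inverted by $L$, in particular for the units $X\to LX$, not just for $f\in\overline{S}$. The standard argument uses that $[Y,-]$ preserves local objects, which requires $\sV$ to be closed. But if $\sV$ were closed, $-\otimes Y$ would be a left adjoint and there would be nothing to prove.

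\textbf{How the paper proceeds.} The paper avoids reflections entirely. It takes the Gabriel--Zisman localization at the closure $\overline{S}$ of \autoref{con:closure}. This closure is built to be a multiplicative system in the sense of \autoref{defn:multiplicative}: it is closed under pushouts and coequalizer comparison maps, and dually under pullbacks and equalizer comparison maps. It is also closed under $\kappa$-small products and coproducts and under tensoring with identities.
\begin{itemize}
\item $L$ preserves $\kappa$-small limits and colimits by \autoref{prop:cosmos}. The pullback and equalizer steps $T_{\alpha,3}$ and $T_{\alpha,4}$ supply the limit preservation, and your closure has no analogue of them.
\item $L$ is strong monoidal by \autoref{smalllimits}, because morphisms of $\sV[\overline{S}^{-1}]$ are zigzags whose backward arrows lie in $\overline{S}$.
\item Item (3) of \autoref{commutewithtensor} is not obtained by writing colimits in $L\sV$ as $L\,\mathrm{colim}$. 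Instead, a diagram in $L\sV$ is lifted to a zigzag diagram in $\sV$, and a cofinality argument (\autoref{lem:cofinal}) finishes the proof.
\end{itemize}
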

\begin{rem}
  If $L\sV$ is presentable, it is closed monoidal by the adjoint functor
  theorem.
\end{rem}
  Day showed in \cite[Proposition 2.5]{day1973note} that any monoidal category
  can be embedded into a complete and cocomplete category in a way that
  preserves both the monoidal structure, limits and those colimits \emph{that are
    respected by the tensor product}. 
  The goal of his localization is for cocompleteness
  while the goal of our localization is to commute $\otimes$ with colimits.
  
  In the same paper, Day introduced the concept of a \emph{monoidal} collection of morphisms $S$ in
  $\sV$ such that the localization $L: \sV \to \sV[S^{-1}]$
  preserves the tensor product. We introduce the concept of $S$ being a
  \emph{$\kappa$-closed monoidal multiplicative system}, so that the localization $L: \sV \to \sV[S^{-1}]$
  preserves the tensor product as well as $\kappa$-small limits and colimits.
  We then show in \autoref{thm:closure} that   
\begin{thm}
  Let $\kappa$ be a regular cardinal.

Any collection of morphisms $S$ has a $\kappa$-closed
monoidal multiplicative closure $\overline{S}$.
\end{thm}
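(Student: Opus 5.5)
We need the definition of a $\kappa$-closed monoidal multiplicative system. I take it to be a class $S$ of morphisms in $\sV$ that: contains all isomorphisms and is closed under composition (multiplicative); is closed under $f\otimes \mathrm{id}_X$ for all objects $X$ (monoidal); and is closed under $\kappa$-small limits and colimits of diagrams of morphisms, in the arrow category, that are natural transformations between $\kappa$-small diagrams with all components in $S$. Some versions also require 2-out-of-3 or saturation; if the paper's definition includes it, I would add that closure operation too.

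The plan is the standard transfinite closure argument. The key observation is that every defining condition has the form ``if certain morphisms (fewer than $\kappa$ of them, or one or two) lie in $S$, then a certain morphism built from them lies in $S$.'' Equivalently, the class is closed under a family of operations of arity $<\kappa$.

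Define $\overline{S}$ as the intersection of all collections $T \supseteq S$ that are $\kappa$-closed monoidal multiplicative systems. The class of all morphisms of $\sV$ is one such $T$, since $\sV$ is complete and cocomplete. An arbitrary intersection of such systems is again one, because each closure condition is a Horn-type condition: hypotheses $f_i\in T$ imply a conclusion $g\in T$, and this is preserved under intersection. Hence $\overline{S}$ is the smallest such system containing $S$.

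To avoid size issues with intersecting over a non-small collection of classes, I would instead build $\overline{S}$ from below, and check that the two descriptions agree. Set $S_0 = S\cup\{\text{isos}\}$. Let $S_{\alpha+1}$ be $S_\alpha$ together with:
\begin{itemize}
\item all composites of morphisms in $S_\alpha$;
\item all $f\otimes \mathrm{id}_X$ and $\mathrm{id}_X\otimes f$ with $f \in S_\alpha$;
\item all $\kappa$-small (co)limits of natural transformations whose components lie in $S_\alpha$.
\end{itemize}
Take unions at limit ordinals, and let $\overline{S}=S_\kappa$.

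The main obstacle is verifying that $S_\kappa$ is closed, and this is exactly where regularity of $\kappa$ is used. A $\kappa$-small diagram of morphisms in $S_\kappa$ has fewer than $\kappa$ components, each lying in some $S_{\alpha_i}$ with $\alpha_i<\kappa$. By regularity, $\sup\alpha_i<\kappa$, so all components lie in a single stage $S_\beta$, and the (co)limit lies in $S_{\beta+1}\subseteq S_\kappa$. Composition and tensoring with a fixed object are finitary operations, so they are handled the same way.

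One subtlety needs care. Components of $\kappa$-small diagrams are indexed by a small category, so the stages are classes of morphisms, defined by a formula-level recursion. This is legitimate in the ambient set theory, since each stage is definable.
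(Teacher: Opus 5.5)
\textbf{Gap: your definition of multiplicative system omits the Ore and cancellation conditions.} The statement uses the paper's notion, the Gabriel--Zisman system of Definition~\ref{defn:multiplicative}. Besides identities and composites, such a system must satisfy:
\begin{itemize}
\item the Ore condition: for $f_1\colon X\to Y$ and $s\colon X\to X'$ in $S$, there exist $f_2$ and $t\in S$ with $f_2 s=t f_1$;
\item the cancellation condition: if $g_1 s=g_2 s$ with $s\in S$, there is $t\in S$ with $t g_1=t g_2$;
\item the duals of both.
\end{itemize}
Your proposal never verifies these. Your organizing claim that every condition is of Horn type is also false for them. They are existential, so an intersection of multiplicative systems need not be one, since different systems may supply different witnesses $t$. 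Your intersection argument therefore breaks down. Nor can you ``add them as a closure operation'' without first choosing canonical witnesses. Making that choice is exactly what Construction~\ref{con:closure} does, using that $\sV$ is complete and cocomplete. At each stage it adds the pushouts $Y\to Y\amalg_X Z$ of maps in $S_\beta$, the comparison maps $\mathrm{coeq}(fs,gs)\to\mathrm{coeq}(f,g)$, and dually pullbacks and equalizer comparison maps. Part (1) of the paper's proof checks that these supply the Ore and cancellation witnesses one stage later.

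\textbf{Repair within your framework.} Your third bullet already contains these witnesses, so your construction can be fixed.
\begin{itemize}
\item The pushout leg $Y\to Y\amalg_X Z$ is the map on colimits induced by the map of spans $(Y\leftarrow X\xrightarrow{\mathrm{id}}X)\Rightarrow(Y\leftarrow X\xrightarrow{s}Z)$ with components $(\mathrm{id}_Y,\mathrm{id}_X,s)$.
\item If $g_1s=g_2s$, the map $t\colon Y\cong\mathrm{coeq}(g_1s,g_2s)\to\mathrm{coeq}(g_1,g_2)$ is induced by the map of parallel pairs with components $(s,\mathrm{id}_Y)$, and it satisfies $tg_1=tg_2$.
\item Dually, use limits of maps of cospans and of parallel pairs.
\end{itemize}
So $s\in S_\beta$ produces the required $t\in S_{\beta+1}$, but you must state and check this.

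\textbf{Universal property.} This must be argued at the level of localizations, not by minimality. A given $\kappa$-closed monoidal multiplicative system $\overline{S}'\supseteq S$ need not contain your pushouts or comparison maps. Instead, show by induction on $\alpha$ that $L'\colon\sV\to\sV[(\overline{S}')^{-1}]$ inverts $S_\alpha$. This uses that $L'$ preserves $\kappa$-small limits, colimits and $\otimes$ (Propositions~\ref{prop:cosmos} and~\ref{smalllimits}), as in part (3) of the paper's proof.

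Your closure under arbitrary $\kappa$-small (co)limits of natural transformations, rather than just products and coproducts, is harmless here. After applying $L'$, such a transformation becomes a natural isomorphism, and $L'$ preserves the (co)limits. It also neatly subsumes the paper's steps $T_{\alpha,1}$ through $T_{\alpha,4}$ and $T_{\alpha,6}$.
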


\textbf{Acknowledgments.}
The authors would like to thank Peter May and Jonathan Rubin for helpful conversations. 

\section{Preliminaries}
\label{sec:prelim}
We recall some definitions and results from \cite{MZZ}.
Let $\sV$ be a complete and cocomplete symmetric monoidal category throughout.
The category $\sV$ has three distinguished objects:
\begin{enumerate}
\item $\emptyset$, the initial object, that is, the coproduct of the empty set of objects.
\item $\ast$, the terminal object, that is, the product of the empty set of objects.
\item $\mI$, the unit object for $\otimes$ on $\sV$.
\end{enumerate}

\begin{defn}
  The category $\LA$ has objects of based finite sets $\mathbf{n} =\{0,1, \cdots, n\}$,
with base point $0$, and morphisms of based injections. The category $\SI$ is the
subcategory of $\LA$ with the same objects and isomorphisms, or permutations.
Note that morphisms in $\LA$ are generated by the permutations and ordered
injections  $\si_i\colon \bf{n-1}\rtarr \bf{n}$, $1\leq i\leq n$, that skip $i$ in
the target.
\end{defn}
A \emph{symmetric sequence} (also called \emph{$\Sigma$-sequence} or \emph{$\Sigma$-object}) in $\sV$ is a functor $\SI^{op}\rtarr \sV$;
a \emph{$\Lambda$-object} in $\sV$ is a functor $\sD: \Lambda^{op} \to \sV$; a \emph{(unitary) $\Lambda$-sequence}
in $\sV$ is a $\Lambda$-object $\sD$ together with a base map $\eta\colon
\mI \rtarr \sD (\mathbf 0)$. We say that a $\LA$-sequence $\sD$ is \emph{unital}
if $\sD (\mathbf{0})=\mI$ and $\eta$ is the identity map.
\begin{notn}\label{notn2}  We write
$\Sigma^{op}[\sV]$, $\LA^{op}[\sV]$, $\LA^{op}[\sV]_{\sI_0}$ and $\LA^{op}_{\mI}[\sV]$
for the category of $\SI$-sequences, $\LA$-objects,
$\LA$-sequences and unital $\LA$-sequences, where the morphisms are
natural transformations (that restrict to morphisms under $\mI$ at
$\mathbf{0}$). We have an inclusion of categories  $\LA^{op}_{\mI}[\sV]\subset
\LA^{op}[\sV]_{\sI_{0}}$. 
\end{notn}

\begin{defn}
  For $m \geq 0$, we let $\sI_{m}$ denote the $\LA$-sequence
  $$\sI_{m} =  \Lambda(-,\bm) \otimes \mI.$$
In particular,  $\sI_0(\mathbf{0})=\mI$ and $\sI_0(\bn)=\emptyset$ for $n\geq1$;
$\sI_1(\mathbf{0})= \sI_1(\mathbf{1})=\mI$ and $\sI_1(\bn)=\emptyset$ for $n\geq2$.
We abuse notation to also write the base map as $\eta: \sI_0 \to \sD$.
\end{defn}

\medskip
For $\sD$ and $\sE$ in  $\LA^{op}[\sV]$,
the Day convolution $\sD\boxtimes \sE$ is the left Kan extension
displayed in the diagram
\begin{equation}\label{Kan1}
  \begin{tikzcd}
     \LA^{op}\times \LA^{op} \ar[r,"{\sD \times \sE}"] \ar[d,"{\bigvee}"'] & \sV \times
     \sV \ar[r,"\otimes"]& \sV.\\
\LA^{op} \ar[urr,"{\sD\boxtimes \sE}"']  & \\
  \end{tikzcd}
\end{equation}
It can be computed by a coend
\begin{equation}\label{coend1}    
(\sD\boxtimes \sE)(\bn)  = \int^{\bj_1,\bj_2} \Lambda(\bn, \mathbf{j_1 +  j_2}) \otimes \big( \sD(\bj_1) \otimes \sE(\bj_2) \big).
\end{equation}

As proved in \cite[2.1]{Kelly}, it is formal that $\boxtimes$ is a symmetric monoidal
product which is closed when $\sV$ is and that $\sI_0$ is the unit for the Day
convolution.

When  $\sD$ and $\sE$ are $\Lambda$-sequences, so is $\sD \boxtimes \sE$.
The base maps of $\sD$ and $\sE$ induce the base map of $\sD \boxtimes \sE$ by
\begin{equation*}
  \begin{tikzcd}
    \eta: \sI_0 \cong \sI_0 \boxtimes \sI_0 \ar[r,"\eta \boxtimes \eta"] & \sD \boxtimes \sE.
  \end{tikzcd}
\end{equation*}

One can perform the Day convolution on $\Sigma^{op}[\sV]$ similarly, and we write
 $\boxtimes_{\Lambda}$  and $\boxtimes_{\Sigma}$ to indicate the context. For $\sD$ and $\sE$ in
$\LA^{op}[\sV]$, there is a natural map of $\SI$-sequences
$q \colon  \sD\boxtimes_{\SI} \sE \rtarr  \sD\boxtimes_{\LA} \sE$, which turns out to be  an
isomorphism of $\SI$-sequences \cite[Theorem 3.4]{MZZ}. So we may write $\boxtimes$ for both
$\boxtimes_{\Lambda}$ and $\boxtimes_{\Sigma}$.
Explicitly,
\[  (\sD\boxtimes \sE)(\bn) \cong \coprod_{j+k = n} \sD(\bj) \otimes \sE(\bk) \otimes_{\SI_j\times \SI_k}
  \SI_n.  \]

\begin{rem}
 One has the Day convolution product for covariant sequences as well.
 For $\sC, \sD \in \Lambda[\sV]$, the natural map $q \colon  \sD\boxtimes_{\SI} \sE \rtarr
 \sD\boxtimes_{\LA} \sE$ is no longer an isomorphism: For example, in $(\sC \boxtimes_{\Lambda} \sD) (\mathbf{3})$, there is one
  copy of $\sC(3) \otimes \sD(0)$ and three copies of $\sC(2) \otimes \sD(0)$. All three
  copies are glued to $\sC(3) \otimes \sD(0)$ via different maps, yielding a quotient
  space of $(\sC \boxtimes_{\Sigma} \sD) (\mathbf{3})$.
\end{rem}
\medskip

When $\otimes$ commutes with finite colimits in $\sV$, the Day convolution is
associative, and there is
\begin{equation}
\label{eq:coend2}
     \sD_1 \boxtimes_\Lambda \cdots \boxtimes_\Lambda \sD_n = \int^{i_1,\dots ,i_n} \Lambda(-,\mathbf{i_1} +\dots+\mathbf{i_n} ) \otimes \bigg( \sD_1(\mathbf{i_1}) \otimes \cdots \otimes \sD_n(\mathbf{i_n}) \bigg)
\end{equation}

\begin{defn}\label{KellyLA}  Define the Kelly product $\odot_{\LA}$ on  $\Lambda^{op}[\sV]_{\sI_0}$ by
  \[ \big ( \sD  \odot_{\LA} \sE\big )(\bn) = \int^{k} \sD(\bk)\otimes \Ek(\bn). \]
The base maps  of $\sD$ and $\sE$ induce the base map of $\sD  \odot_{\LA} \sE$ as the composite
$$\xymatrix@1{\sI_0 \ar[r]^-{\eta} & \sD = \sD \odot_{\LA} \sI_0 \ar[r]^-{\sD \odot_{\LA} \eta} & \sD \odot_{\LA} \sE \\}.$$
Observe that if $\sD(\mathbf{0})=\mI$ and $\sE(\mathbf{0})= \mI$, then these are
identity maps, so that $\odot$ restricts to a product on unital $\LA$-sequences.  

The Kelly product $\odot_{\SI}$ on $\SI^{op}[\sV]$ is defined by replacing $\LA$ by $\SI$
and ignoring the base maps in this definition.

\end{defn}

\begin{rem}
  The Kelly product is also defined when the first variable does not have a base
  map, as a functor
\begin{equation*}
   \odot_{\LA}: \Lambda^{op}[\sV] \times  \Lambda^{op}[\sV]_{\sI_0} \to  \Lambda^{op}[\sV].
\end{equation*}
\end{rem}

\section{Modules}
\label{sec:modules}
In this section, we always assume that $(\sV , \otimes , \mathcal{I})$ is a closed complete and cocomplete symmetric monoidal category.  

\subsection{Monoids and modules}
The following results established the Kelly product as a monoidal product and
identified the corresponding monoids.
Recall that
$\sI_1(\mathbf{0})= \sI_1(\mathbf{1})=\mI$ and $\sI_1(\bn)=\emptyset$ for $n\geq2$. Let
$\sI_1^{\Sigma} \in \Sigma^{op}[\sV]$ be such that
$\sI_1(\mathbf{1})=\mI$ and $\sI_1(\bn)=\emptyset$ for $n \neq 1$.
\begin{thm}\label{thm:MZZ}
  We have the following results on Kelly products:
\begin{enumerate}
\item \label{item:MZZ-1}  The Kelly product  $\odot_{\Sigma}$ is associtaive with unit
  $\sI_1^{\Sigma}$ and strong monoidal in the first variable.
  
\item \label{item:MZZ-2}  Operads in $\sV$ are the same as monoids in $(\Sigma^{op}[\sV],\odot_{\Sigma},\sI_1^{\Sigma})$.
\item \label{item:MZZ-3} The Kelly product  $\odot_{\Lambda}$ is associtaive with unit
  $\sI_1$ and strong monoidal in the first variable.
\item \label{item:MZZ-4}
  Unital operads in $\sV$ are the same as monoids in $(\Lambda^{op}_\mI[\sV],\odot_{\Lambda},\sI_1)$ and based
operads in $\sV$ are the same as monoids in $(\Lambda^{op}[\sV]_{\sI_0},
\odot_{\Lambda},\sI_1)$. 
\item \label{item:MZZ-5} If $\sV$ is Cartesian monoidal, the Kelly products
  $\odot_{\Sigma}$ and  $\odot_{\Lambda}$ are oplax monoidal in the first variable.
\end{enumerate}
\end{thm}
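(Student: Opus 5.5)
This theorem collects results recalled from earlier work: items (1) and (2) are Kelly's classical theorem, and items (3)--(5) are from \cite{MZZ}. So the plan is to indicate how each follows from the definitions, citing those sources for the combinatorial details.

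For (1) and (3), I would first rewrite the $k$-fold Day power using \eqref{eq:coend2}:
\[
\Ek(\bn)=\int^{i_1,\dots,i_k}\Lambda(\bn,\mathbf{i_1}+\cdots+\mathbf{i_k})\otimes\sE(\mathbf{i_1})\otimes\cdots\otimes\sE(\mathbf{i_k}).
\]
This uses that $\sV$ is closed, so $\otimes$ commutes with colimits and the Day convolution is associative.

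\emph{Strong monoidality in the first variable.} The goal is to show $(\sD\boxtimes\sD')\odot\sE\cong(\sD\odot\sE)\boxtimes(\sD'\odot\sE)$. Expand the left side as a double coend. Then apply the co-Yoneda lemma to collapse the $\Lambda(\bk,\mathbf{j_1+j_2})$ factor, using $\sE^{\boxtimes(j_1+j_2)}\cong\sE^{\boxtimes j_1}\boxtimes\sE^{\boxtimes j_2}$. Interchanging coends with $\otimes$ gives the right side. The unit is handled similarly: $\sI_0\odot\sE\cong\sI_0$.

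\emph{Associativity.} Use Fubini for coends together with strong monoidality. The key identity is $(\sD\odot\sE)\odot\sF\cong\int^k\sD(\bk)\otimes(\sE^{\boxtimes k}\odot\sF)$, since $-\odot\sF$ commutes with colimits and tensors in the first variable. Then $\sE^{\boxtimes k}\odot\sF\cong(\sE\odot\sF)^{\boxtimes k}$ by strong monoidality, which identifies the result with $\sD\odot(\sE\odot\sF)$.

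\emph{Units.} By co-Yoneda, $\sD\odot\sI_1=\int^k\sD(\bk)\otimes\sI_1^{\boxtimes k}$, and $\sI_1^{\boxtimes k}\cong\Lambda(-,\bk)\otimes\mI$, so this is $\sD$. On the other side, $\sI_1\odot\sE\cong\sE^{\boxtimes 1}=\sE$. In the $\Lambda$ case, $\sI_1$ is representable by $\mathbf 1$; this is exactly where the coend over $\Lambda$, rather than over $\Sigma$, matters.

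For (2) and (4), a monoid structure $\sC\odot\sC\to\sC$ is, by the universal property of the coend, the same as a natural family
\[
\sC(\bk)\otimes\sC(\mathbf{j_1})\otimes\cdots\otimes\sC(\mathbf{j_k})\to\sC(\mathbf{j_1+\cdots+j_k})
\]
that is equivariant in all variables. The monoid associativity and unit diagrams then translate into May's axioms.

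In the $\Lambda$ case there is extra structure. Naturality in the injections $\si_i$ encodes compatibility of the degeneracies $\sC(\bn)\to\sC(\mathbf{n-1})$, which are obtained by plugging in $\sC(0)$ via $\eta$, with the composition. The main check is that these degeneracies are forced by the operad structure. They are: precomposition by $\si_i$ equals $\gamma(-;\mathrm{id},\dots,\eta,\dots,\mathrm{id})$, and this follows from the unit axiom applied with $\sI_1(\mathbf 0)=\mI$. For unital operads one also checks that $\sC(\mathbf 0)=\mI$ is preserved, which holds by the observation following \autoref{KellyLA}.

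For (5), when $\sV$ is Cartesian, the diagonal $\sD\to\sD\times\sD$ induces a natural comparison $(\sD\times\sD')\odot\sE\to(\sD\odot\sE)\times(\sD'\odot\sE)$ and $\ast\odot\sE\to\ast$, and these satisfy the oplax coherences by naturality.

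I expect the main obstacle to be the $\Lambda$-associativity. The basepoint identifications require verifying that Day powers of $\Lambda$-objects interact correctly with the injections, and that the Day convolution over $\Lambda$ agrees with the one over $\Sigma$. Here I would invoke \cite[Theorem 3.4]{MZZ}, which gives the isomorphism $\sD\boxtimes_{\Sigma}\sE\cong\sD\boxtimes_{\Lambda}\sE$, and reduce the remaining checks to the $\Sigma$ case.
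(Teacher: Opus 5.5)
Your sketches for (1)--(4) are reasonable; the paper does not reprove these but cites \cite{Kelly0, MZZ}. The genuine gap is in (5). You read it as saying that $-\odot\sE$ is oplax with respect to the levelwise product $\times$, but that statement is empty. Every functor between categories with finite products is canonically oplax monoidal for the Cartesian structures, via the images of the projections; it is the projections $\sD\times\sD'\to\sD,\sD'$, not a diagonal of $\sD$, that give the comparison. So under your reading the hypothesis that $\sV$ is Cartesian would play no role. Your reading is also inconsistent with (1) and (3), where you correctly took the monoidal structure to be $\boxtimes$. And for $\boxtimes$ the first variable is already strong monoidal, so (5) would add nothing there either. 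Despite the wording ``first variable'', what the paper proves is that $\sE\odot-$ is oplax with respect to $\boxtimes$. This is also what it uses later, to make $\sD\boxtimes\sE$ a left $\sC$-module and to build the operad $\sB=\sC\boxtimes\comm$. Concretely, there is a natural map
\[
\sE\odot(\sD\boxtimes\sD')\longrightarrow(\sE\odot\sD)\boxtimes(\sE\odot\sD').
\]

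Producing this map is the missing idea. Since $\boxtimes$ is symmetric monoidal, $(\sD\boxtimes\sD')^{\boxtimes m}\cong\sD^{\boxtimes m}\boxtimes\sD'^{\boxtimes m}$. The diagonal $\sE(\mathbf{m})\to\sE(\mathbf{m})\otimes\sE(\mathbf{m})$ is natural in $\mathbf{m}$ because $\sV$ is Cartesian. It therefore maps $\sE\odot(\sD\boxtimes\sD')$ to $\int^{\mathbf{m}}\sE(\mathbf{m})\otimes\sE(\mathbf{m})\otimes(\sD^{\boxtimes m}\boxtimes\sD'^{\boxtimes m})$. Expanding the inner Day convolution and commuting $\otimes$ with coends rewrites this as
\[
\int^{\mathbf{n_1},\mathbf{n_2},\mathbf{m}}\Lambda(-,\mathbf{n_1}+\mathbf{n_2})\otimes\sE(\mathbf{m})\otimes\sD^{\boxtimes m}(\mathbf{n_1})\otimes\sE(\mathbf{m})\otimes\sD'^{\boxtimes m}(\mathbf{n_2}).
\]
Now take the canonical map from the coend over the diagonal $\mathbf{m}$ to the coend over independent $\mathbf{m_1},\mathbf{m_2}$. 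It lands in $\int^{\mathbf{n_1},\mathbf{n_2}}\Lambda(-,\mathbf{n_1}+\mathbf{n_2})\otimes(\sE\odot\sD)(\mathbf{n_1})\otimes(\sE\odot\sD')(\mathbf{n_2})=(\sE\odot\sD)\boxtimes(\sE\odot\sD')$. The same argument works with $\Sigma$ in place of $\Lambda$. Duplicating the single outer operation $\sE(\mathbf{m})$ is exactly where Cartesianness is used, and your proposal never does this.
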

\begin{proof}
Properties \autoref{item:MZZ-1}-\autoref{item:MZZ-4} are proved in
\cite{Kelly0, MZZ}. For \autoref{item:MZZ-5}, note that when $\sV$ is Cartesian
monoidal, we have diagonal morphisms $\Delta: A \xrightarrow{} A \otimes A$ for $A \in \sV$. Then we have
{\small
  \begin{align*} 
    &        \sE \odot_\Lambda (\sD \boxtimes \sD') (\mathbf{n})  \\
    &=  \int^{\mathbf{m}} \sE (\mathbf{m}) \otimes (\sD \boxtimes \sD')^{\boxtimes m}(\mathbf{n}) \\
        & \cong  \int^{\mathbf{m} } \sE (\mathbf{m}) \otimes \big(\sD^{\boxtimes m} \boxtimes \sD'^{\boxtimes
          m}\big)(\mathbf{n}) \\
       & \xrightarrow{} \int^{\mathbf{m} } \sE (\mathbf{m}) \otimes \sE(\mathbf{m})\otimes
         \big(\sD^{\boxtimes m} \boxtimes \sD'^{\boxtimes m}\big)(\mathbf{n}) \\
       & \cong \int^{\mathbf{m} } \sE (\mathbf{m}) \otimes \sE(\mathbf{m})\otimes
         \bigg(\int^{\mathbf{n_1}, \mathbf{n_2}} \Lambda(\mathbf{n}, \mathbf{n}_1
         +\mathbf{n}_2) \otimes \sD^{\boxtimes m} (\mathbf{n}_1)\otimes \sD'^{\boxtimes
         m}(\mathbf{n}_2)\bigg) \\
      & \cong \int^{\mathbf{n_1}, \mathbf{n_2}, \mathbf{m}} \Lambda(\mathbf{n},
        \mathbf{n}_1 +\mathbf{n}_2) \otimes  \sE{(\mathbf{m})}\otimes \sD^{\boxtimes
        m}(\mathbf{n}_1)  \otimes  \sE{(\mathbf{m})}\otimes \sD'^{\boxtimes m}(\mathbf{n}_2) \\
      &\xrightarrow{} \int^{\mathbf{n_1}, \mathbf{n_2}, \mathbf{m_1}, \mathbf{m_2}} \Lambda(\mathbf{n}, \mathbf{n}_1 +\mathbf{n}_2) \otimes \sE{(\mathbf{m_1})}\otimes \sD^{\boxtimes m_1}(\mathbf{n}_1) \otimes \sE{(\mathbf{m_2})}\otimes \sD'^{\boxtimes m_2}(\mathbf{n}_2)  \\
   &\cong \int^{\mathbf{n_1}, \mathbf{n_2}} \Lambda(\mathbf{n}, \mathbf{n}_1 +\mathbf{n}_2) \otimes \big(\int^{\mathbf{m_1}} \sE{(\mathbf{m_1})}\otimes \sD^{\boxtimes m_1}(\mathbf{n}_1) \big) \otimes \big(\int^{\mathbf{m_2}} \sE{(\mathbf{m_2})}\otimes \sD'^{\boxtimes m_2}(\mathbf{n}_2) \big) \\
        & = (\sE \odot_\Lambda \sD ) \boxtimes (\sE \odot_\Lambda\sD') (\mathbf{n}).
  \end{align*}
}
\noindent Here, the first arrow is induced by the diagonal morphisms and the second arrow
is a comparison of colimits. For the isomorphisms, the first one uses that $\boxtimes$
is symmetric monoidal; the second one holds by definition; the third and last
one commute colimits with $\otimes$. The proof also works for $\odot_\Sigma$ replacing all $\Lambda$ by $\Sigma$.
\end{proof}

We list some properties for both contexts of $\Sigma$-sequences and $\Lambda$-sequences.
\begin{prop} \label{prop:module-dot}
If $\sE$ is a right $\sC$-module, then $\sD \odot \sE$ is a right $\sC$-module. 
 If $\sD$ is a left $\sC$-module, then $\sD \odot \sE$ is a left $\sC$-module.
\end{prop}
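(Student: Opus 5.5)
The plan is to use only the associativity of the Kelly product from \autoref{thm:MZZ}. By \autoref{item:MZZ-1} and \autoref{item:MZZ-3}, $\odot$ is associative with unit $\sI_1^{\Sigma}$, respectively $\sI_1$. We write $a$ for the associativity isomorphism and $l,r$ for the unit isomorphisms. A right $\sC$-module is an object $\sE$ with an action $\rho\colon \sE\odot\sC\to\sE$, and a left $\sC$-module is an object $\sD$ with an action $\lambda\colon \sC\odot\sD\to\sD$. In both cases the action satisfies the usual associativity and unit axioms relative to the monoid structure $\mu\colon\sC\odot\sC\to\sC$, $\eta\colon\sI_1\to\sC$.

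For the first statement, define the right action on $\sD\odot\sE$ as the composite
\[ (\sD\odot\sE)\odot\sC \xrightarrow{a} \sD\odot(\sE\odot\sC) \xrightarrow{\sD\odot\rho} \sD\odot\sE. \]
Associativity of this action follows from a diagram in which each cell commutes by one of the following:
\begin{itemize}
\item the pentagon axiom for $a$, applied to the four objects $\sD,\sE,\sC,\sC$;
\item naturality of $a$ with respect to $\rho$ and $\mu$;
\item $\sD\odot-$ applied to the associativity square for $\rho$.
\end{itemize}
Unitality follows in the same way from the triangle axiom relating $a$ and $r$, together with $\sD\odot-$ applied to the unit axiom for $\rho$. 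The second statement is symmetric. The left action on $\sD\odot\sE$ is $(\lambda\odot\sE)\circ a^{-1}\colon \sC\odot(\sD\odot\sE)\to(\sC\odot\sD)\odot\sE\to\sD\odot\sE$, and the same pentagon, naturality and triangle arguments apply. This is the standard fact that in any monoidal category $\sD\odot-$ preserves right modules and $-\odot\sE$ preserves left modules.

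The only point needing care is that the Kelly product is not symmetric or biclosed. So we must check that it is a genuine monoidal structure: the isomorphisms $a$ must be natural in all three variables and satisfy the pentagon, not merely exist objectwise. \autoref{thm:MZZ} asserts that $\odot$ is associative with unit, which I read as a monoidal category structure; that is all that is used. In the $\Lambda$ case one must also check the variable types. Here $\sD$ may be any $\Lambda$-object, while $\sE$ must carry a base map, using the version $\odot_\Lambda\colon\Lambda^{op}[\sV]\times\Lambda^{op}[\sV]_{\sI_0}\to\Lambda^{op}[\sV]$. The associativity isomorphism $(\sD\odot\sE)\odot\sC\cong\sD\odot(\sE\odot\sC)$ should hold in this generality by the same coend computation as in \cite{MZZ}, which is where the closedness of $\sV$ is used. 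I expect confirming this mixed-variable associativity to be the main, though mild, obstacle. The coherence diagrams themselves are then formal.
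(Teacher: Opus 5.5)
Your proposal is correct and follows the paper's own argument. The paper defines the right action as the composite $(\sD\odot\sE)\odot\sC\cong\sD\odot(\sE\odot\sC)\to\sD\odot\sE$ and calls the left case similar, so your pentagon, triangle and naturality checks just make explicit the coherence the paper leaves implicit. Your remark about variable types in the $\Lambda$ setting is a fair refinement the paper does not spell out; note that the action map $\rho$ must also preserve base maps, so that $\sD\odot\rho$ is defined.
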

\begin{proof}
 For the first case, the right $\sC$-module structure is given by 
\begin{equation*}
(\sD \odot \sE) \odot \sC \cong \sD \odot (\sE \odot \sC) \to \sD \odot \sE.
\end{equation*}
The second case is similar.
\end{proof}

\begin{prop}
  If $\sD$ and $\sE$ are right $\sC$-modules, then $\sD \boxtimes \sE$ is a right
  $\sC$-module.
\end{prop}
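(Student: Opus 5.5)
The plan is to use that the Kelly product is strong monoidal in the first variable for the Day convolution, which is \autoref{thm:MZZ} \autoref{item:MZZ-1} and \autoref{item:MZZ-3}. Concretely, for any $\sC$ there is a natural isomorphism
\[
(\sD \boxtimes \sE) \odot \sC \;\cong\; (\sD \odot \sC) \boxtimes (\sE \odot \sC).
\]
We then define the right action on $\sD \boxtimes \sE$ as the composite
\[
\rho\colon (\sD \boxtimes \sE) \odot \sC \cong (\sD \odot \sC) \boxtimes (\sE \odot \sC) \xrightarrow{\rho_\sD \boxtimes \rho_\sE} \sD \boxtimes \sE .
\]
In the $\Lambda$ setting this only uses the base map of $\sC$, since the first variable of $\odot_{\LA}$ needs no base map (by the remark after \autoref{KellyLA}). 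If $\sD$ and $\sE$ carry base maps, one should also check that $\rho$ is compatible with the induced base map of $\sD \boxtimes \sE$; this follows from $\eta \boxtimes \eta$ and naturality.

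The first axiom to check is unitality. We must show that
\[
\sD \boxtimes \sE \cong (\sD\boxtimes\sE)\odot \sI_1 \xrightarrow{(\sD\boxtimes\sE)\odot\eta} (\sD\boxtimes\sE)\odot\sC \xrightarrow{\rho} \sD\boxtimes\sE
\]
is the identity. By naturality of the strong monoidal isomorphism in the second variable, this composite becomes the $\boxtimes$-product of the corresponding unit composites for $\sD$ and $\sE$. That product is the identity, provided the strong monoidal isomorphism for $\sC=\sI_1$ agrees with the unit isomorphisms $\sX \odot \sI_1 \cong \sX$.

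The second axiom is associativity. We compare the two maps $((\sD\boxtimes\sE)\odot\sC)\odot\sC \to \sD\boxtimes\sE$. One route applies the strong monoidal isomorphism twice, then uses the module actions and $\boxtimes$. The other uses the associator $(\sD\boxtimes\sE)\odot(\sC\odot\sC)$, then the multiplication of $\sC$, then the strong monoidal isomorphism. Using naturality, the comparison reduces to the following coherence statement: the strong monoidal isomorphism $\phi_{\sC}$ for $-\odot\sC$ is compatible with the associativity isomorphism of $\odot$. In other words, $\phi_{\sC\odot\sC'}$ must equal the composite of $\phi_{\sC}\odot\sC'$ and $\phi_{\sC'}$, conjugated by associators. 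Put differently, $\sC \mapsto (-\odot\sC)$ should be a monoidal functor into strong monoidal endofunctors of $(\LA^{op}[\sV],\boxtimes)$.

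I expect this coherence to be the main obstacle, because \autoref{thm:MZZ} only asserts that the isomorphism exists. To verify it I would write both sides as coends. The map $\phi$ is induced by the canonical isomorphism $(\sD\boxtimes\sE)^{\boxtimes m}\text{-type}$ rearrangement
\[
\int^{k} (\sD\boxtimes\sE)(\bk)\otimes \sC^{\boxtimes k} \cong \int^{i,j}\sD(\mathbf i)\otimes\sE(\mathbf j)\otimes \sC^{\boxtimes i}\boxtimes\sC^{\boxtimes j},
\]
which uses $\sC^{\boxtimes(i+j)}\cong\sC^{\boxtimes i}\boxtimes\sC^{\boxtimes j}$ together with the coend/Yoneda reduction. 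The associator of $\odot$ is built from the isomorphisms $(\sC\odot\sC')^{\boxtimes k}\cong \sC^{\boxtimes k}\odot\sC'$, which are themselves instances of $\phi$. Consequently both composites are assembled from symmetric monoidal coherence isomorphisms of $\boxtimes$ and of $\otimes$ in $\sV$, together with interchange of coends. Their equality then follows from Mac Lane's coherence theorem for symmetric monoidal categories, applied levelwise on the summands $\sD(\mathbf i)\otimes\sE(\mathbf j)\otimes\cdots$.
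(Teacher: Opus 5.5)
Your proposal is correct and follows the paper's proof. The paper simply defines the action as the composite $(\sD \boxtimes \sE) \odot \sC \cong (\sD \odot \sC) \boxtimes (\sE \odot \sC) \to \sD \boxtimes \sE$, using that $\odot$ is strong monoidal in the first variable (\autoref{thm:MZZ}), and leaves the unit and associativity axioms implicit; your reduction of those axioms to the compatibility of this isomorphism with the unit and associativity isomorphisms of $\odot$ (i.e.\ that $-\odot\sC$ is a monoidal monad for $\boxtimes$) is exactly the coherence the paper takes for granted.
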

\begin{proof}
  Using \autoref{thm:MZZ}, the right $\sC$-module structure on
  $\sD \boxtimes \sE$ is given by $(\sD \boxtimes \sE) \odot \sC \cong (\sD \odot \sC) \boxtimes (\sE \odot \sC) \to
  \sD \boxtimes \sE$.
\end{proof}

\begin{prop}
  If $\sD$ and $\sE$ are left $\sC$-modules in a Cartesion monoidal category $\sV$, then $\sD \boxtimes \sE$ is a left
  $\sC$-module.
\end{prop}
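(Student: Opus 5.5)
The plan is to mirror the proof of the right-module statement, replacing the strong monoidality of $-\odot\sC$ by the oplax monoidality of $\sC\odot -$ from \autoref{thm:MZZ}\autoref{item:MZZ-5}, which is available precisely because $\sV$ is Cartesian. Given left actions $\lambda_{\sD}\colon \sC\odot\sD\to\sD$ and $\lambda_{\sE}\colon\sC\odot\sE\to\sE$, define the action on $\sD\boxtimes\sE$ as the composite
\[
\sC\odot(\sD\boxtimes\sE)\xrightarrow{\ \delta\ }(\sC\odot\sD)\boxtimes(\sC\odot\sE)\xrightarrow{\lambda_{\sD}\boxtimes\lambda_{\sE}}\sD\boxtimes\sE,
\]
where $\delta$ is the oplax structure map built in the proof of \autoref{thm:MZZ}\autoref{item:MZZ-5}. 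In the $\Lambda$-sequence setting we also check that the base map of $\sD\boxtimes\sE$, namely $\eta\boxtimes\eta$, is compatible with this action. This is immediate because $\delta$ restricted along $\sI_0$ is the canonical isomorphism.

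Next comes the unit axiom. At $\sI_1\odot(\sD\boxtimes\sE)\cong\sD\boxtimes\sE$, only the $m=1$ term of the coend survives. There the diagonal $\Delta\colon\sI\to\sI\otimes\sI$ is an isomorphism, and the comparison of colimits lands on the $(m_1,m_2)=(1,1)$ summand. Hence $\delta$ at $\sI_1$ is the canonical isomorphism $\sD\boxtimes\sE\cong(\sI_1\odot\sD)\boxtimes(\sI_1\odot\sE)$, and the unit axiom reduces to the unit axioms for $\sD$ and $\sE$.

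The main obstacle is associativity, which needs two coherence properties of $\delta$.
\begin{enumerate}
\item Naturality of $\delta$ in the first variable with respect to the multiplication $\sC\odot\sC\to\sC$. This holds because the diagonal is natural and the comparison of colimits is natural.
\item The pentagon-type compatibility: under the associativity isomorphism $(\sC\odot\sC)\odot\sF\cong\sC\odot(\sC\odot\sF)$, the composite $\delta_{\sC\odot-}\circ(\sC\odot\delta)$ should agree with $\delta$ for $\sC\odot\sC$.
\end{enumerate}
For (2) I would compute on the coend formulas. Both sides send a summand $\sC(\bm)\otimes\sC(\bk_1)\otimes\cdots\otimes\sC(\bk_m)\otimes(\sD\boxtimes\sE)^{\boxtimes k}$, with $k=\sum k_i$, to the corresponding product of two copies by applying the diagonal to every $\sC$-factor. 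Both then split each $(\sD\boxtimes\sE)^{\boxtimes k_i}$ via the symmetric monoidal shuffle $\sD^{\boxtimes k_i}\boxtimes\sE^{\boxtimes k_i}$. The two routes differ only in the order in which the diagonals and the shuffles are applied. Coassociativity and cocommutativity of $\Delta$ in a Cartesian category, together with the symmetric monoidal coherence of $\boxtimes$, make them agree.

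Given (1) and (2), the associativity square for the action on $\sD\boxtimes\sE$ splits into a naturality square for $\delta$ and the associativity squares for $\lambda_{\sD}$ and $\lambda_{\sE}$ tensored together via $\boxtimes$. The same argument works verbatim for $\Sigma$-sequences.
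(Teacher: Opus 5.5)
Your proposal is correct and takes the same approach as the paper, whose proof consists precisely of defining the action as the composite $\sC\odot(\sD\boxtimes\sE)\to(\sC\odot\sD)\boxtimes(\sC\odot\sE)\to\sD\boxtimes\sE$, built from the oplax map of \autoref{thm:MZZ}\autoref{item:MZZ-5} and the two given actions. Your checks of the unit and associativity axioms supply what the paper leaves implicit; what they use about the diagonal is its naturality and its compatibility with products (not coassociativity or cocommutativity), together with naturality of $\delta$ in the two $\boxtimes$-factors, which you use only tacitly.
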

\begin{proof}
  Using \autoref{thm:MZZ}\autoref{item:MZZ-5}, the left $\sC$-module structure on
  $\sD \boxtimes \sE$ is given by $\sC \odot (\sD \boxtimes \sE) \to (\sC \odot \sD) \boxtimes (\sC \odot \sE) \to
  \sD \boxtimes \sE$.
\end{proof}

\subsection{Comparing $\Lambda$-sequences and $\Sigma$-sequences}
 Any $\Lambda$-sequence can be viewed as a $\Sigma$-sequence by forgetting
 structures. We denote by $\iota: \sI_1^{\Sigma} \to \sI_1 $ the inclusion map of
 $\Sigma$-sequences. For $\sC, \sD \in \Lambda^{op}[\sV]_{\sI_0}$, there is a canonical 
  quotient map $q: \sC \odot_{\Sigma} \sD \xrightarrow{} \sC \odot_\Lambda \sD $ and the
 two diagrams 
\begin{equation*}
  \begin{tikzcd}
   \sI_1^{\Sigma} \odot_{\Sigma} \sD \ar[d, "\iota"] \ar[r, "\cong"] & \sD \ar[dd,equal]  & \sD \odot_{\Sigma} \sI_1^{\Sigma} \ar[d, "\iota"] \ar[r, "\cong"] & \sD \ar[dd,equal]  \\
    \sI_1 \odot_{\Sigma} \sD \ar[d,"q"]& &  \sD \odot_{\Sigma} \sI_1  \ar[d,"q"]\\
    \sI_1 \odot_{\Lambda} \sD \ar[r,"\cong"] & \sD & \sD \odot_{\Lambda} \sI_1\ar[r,"\cong"] & \sD
  \end{tikzcd}
\end{equation*}
comparing the unit maps commute. Via the maps $q$ and $\iota$, the forgetful map restricts to
\begin{equation*}
  \Mon(\Lambda^{op}[\sV]_{\sI_0}) \to \Mon(\Sigma^{op}[\sV])
\end{equation*}
as well as to the categories of modules.

\begin{thm}\label{pullback} Let $\sC$ be a monoid in $(\Lambda^{op}[\sV]_{\sI_0}, \odot_{\Lambda})$, i.e. a
  based operad.  Then the forgetful map induces
  $$\Rmod_{\sC}[\LA^{op}[\sV]] \cong \Rmod_{\sC}  [\SI^{op}[\sV]].$$
  In particular,  $ \Lambda^{op}[\sV] \cong \Rmod_{\sI_1}[\Lambda^{op}[\sV]] \cong \Rmod_{\sI_1}[\Sigma^{op}[\sV]] $.
  
\end{thm}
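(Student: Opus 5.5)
The plan is to build an inverse to the forgetful functor. A right $\sC$-module in $\Lambda$-objects is a $\Lambda$-object $\sD$ with an action $\sD\odot_\Lambda\sC\to\sD$. Precomposing with the quotient $q\colon \sD\odot_\Sigma\sC\to\sD\odot_\Lambda\sC$ gives a right $\sC$-module in $\Sigma$-objects; compatibility with units and associativity follows from the unit diagrams above and the analogous compatibility of $q$ with the associativity isomorphisms. Since $q$ is an epimorphism (a quotient of coends), two $\Lambda$-actions with the same restriction along $q$ coincide. So the functor is faithful and injective on objects.

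The main work is essential surjectivity on objects together with fullness: given a $\Sigma$-object $\sD$ with action $\mu\colon\sD\odot_\Sigma\sC\to\sD$, I will produce a $\Lambda$-structure on $\sD$ and show $\mu$ factors through $q$. The $\Lambda$-structure comes from the base map $\eta\colon\mI\to\sC(\mathbf 0)$. For $\si_i\colon\mathbf{n-1}\to\mathbf n$, define $\si_i^*\colon\sD(\bn)\to\sD(\mathbf{n-1})$ as the composite
\[\sD(\bn)\cong\sD(\bn)\otimes\mI^{\otimes n}\to\sD(\bn)\otimes\sC(1)^{\otimes i-1}\otimes\sC(0)\otimes\sC(1)^{\otimes n-i}\to(\sD\odot_\Sigma\sC)(\mathbf{n-1})\xrightarrow{\mu}\sD(\mathbf{n-1}).\]
This uses the unit $\mI\to\sC(1)$ in all slots but the $i$th and $\eta$ in the $i$th. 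Equivalently, this is the action along the map of $\Sigma$-objects $\sI_1\to\sC$ induced by the unit and $\eta$. Restricting $\mu$ along it makes $\sD$ a right $\sI_1$-module in $\Sigma$-objects.

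The relations among the $\si_i$ and permutations (that is, functoriality on $\Lambda^{op}$) follow from associativity of $\mu$ and the identity $\gamma(\mathrm{id};\ldots,\eta,\ldots)=\si_i^*$ in the operad $\sC$. This reduces the general statement to the special case $\sC=\sI_1$, namely $\Lambda^{op}[\sV]\cong\Rmod_{\sI_1}[\Sigma^{op}[\sV]]$. That case I would check directly. $(\sD\odot_\Sigma\sI_1)(\bn)$ is a coproduct over $k$ of $\sD(\bk)\otimes_{\Sigma_k}(\sI_1^{\boxtimes k})(\bn)$, and $(\sI_1^{\boxtimes k})(\bn)$ is indexed by based injections $\bn\to\bk$. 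So an action of $\sI_1$ is precisely a family of maps $\sD(\bk)\to\sD(\bn)$ indexed by injections, and associativity and unit are precisely functoriality. The first isomorphism $\Lambda^{op}[\sV]\cong\Rmod_{\sI_1}[\Lambda^{op}[\sV]]$ is the unit isomorphism of \autoref{thm:MZZ}\autoref{item:MZZ-3}.

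The hardest step is showing that $\mu$ factors through $q$, that is, that $\mu$ coequalizes the extra $\Lambda$-identifications in $\sD\odot_\Lambda\sC=\int^{\bk}\sD(\bk)\otimes\sC^{\boxtimes k}$. These identifications set $(\si_i^*d;\,c_1,\ldots,\widehat{c_i},\ldots,c_k)$ equal to $(d;\,c_1,\ldots,\eta,\ldots,c_k)$, together with the internal $\Lambda$-identifications inside $\sC^{\boxtimes k}$. For the first kind, I would write $\si_i^*d$ as $\mu(d;1,\ldots,\eta,\ldots,1)$ and apply associativity of $\mu$ with the operad identities $\gamma(1;c)=c$ and $\gamma(\eta;)=\eta$. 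For the second kind, which comes from $\sC$'s own $\Lambda$-structure, the corresponding operad identity is that $\gamma(c;\ldots,\eta,\ldots)$ equals the $\Lambda$-restriction of $c$. The concern is keeping these element-style arguments honest in a general closed $\sV$. I would phrase each identification as the equality of two maps out of $\sD(\bk)\otimes\sC(\bj_1)\otimes\cdots$ and verify it diagrammatically via the associativity square for $\mu$ precomposed with $\eta$.

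Finally, I will check that morphisms of $\Sigma$-modules automatically commute with the constructed $\si_i^*$, since these are defined through $\mu$. This gives fullness, so the forgetful functor is an isomorphism of categories.
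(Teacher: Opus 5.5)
Your proposal is correct and follows essentially the paper's route. The paper also defines $\sD(\sigma_i)$ by inserting $\eta$ in the $i$th slot and units elsewhere and then acting. It proves your ``first kind'' identification (its condition (2)) from associativity of the $\Sigma$-action, exactly as you do. It covers your ``second kind'' check, i.e.\ $\Lambda$-equivariance of the descended action via $\gamma(c;1,\ldots,\eta,\ldots,1)=$ restriction of $c$, by defining $\phi^\lambda$ for non-bijective $\lambda$ through its condition (1). Your use of the $\sI_1$ case to get functoriality of the $\sigma_i^*$ matches the paper's alternative reduction to $\mathscr{C}=\sI_1$.

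Your plan is if anything more complete, since it explicitly treats functoriality of the $\sigma_i^*$, uniqueness and the module axioms via $q$ being epi, and fullness, all of which the paper leaves implicit. One small addition: for injectivity on objects, note that a $\Lambda$-module's own $\Lambda$-structure is recovered from its action, by the unit axiom together with the first-kind identification.
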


\begin{proof}
    An object $\sD$ in $\mathrm{RMod}_{\sC}[\Sigma^{op}[\sV]]$ is a morphism
    $\sD  \odot_\Sigma \sC \xrightarrow{} \sD$ satisfying compatibility
    conditions. Explicitly,  for $n \geqslant 0$ there are morphisms
    \begin{equation}
        \coprod_{k,n_1,\cdots,n_k} \coprod_{\Sigma_n} \sD(\mathbf{k}) \otimes \sC(\mathbf{n_1})\otimes\cdots \sC(\mathbf{n_k}) \xrightarrow{} \sD(\mathbf{n}),
    \end{equation}
    which is equivalent to a series of morphisms 
    \begin{equation}
        \psi^\sigma_{k, n_1 ,\cdots , n_k} : \sD(\mathbf{k}) \otimes \sC(\mathbf{n_1})\otimes\cdots \sC(\mathbf{n_k}) \xrightarrow{} \sD(\mathbf{n})  
    \end{equation}
    where $n = n_1 + \cdots + n_k;    \sigma \in \Sigma (\mathbf{n}, \mathbf{n_1} + \cdots
    +\mathbf{n_k}) = \Sigma_n$.

    We first prove that the family of morphisms $\{ \phi^{\lambda}_{k,n_1,\cdots ,n_k} \}$
    determines a $\Lambda$-sequence structure on $\sD$. For $\sigma_i \in
    \Lambda(\mathbf{n-1},\bn)$, we let $\sD(\sigma_i) : \sD(\bn) \xrightarrow{}   \sD(\mathbf{n-1})$  be the composite
\begin{equation}\label{eq:8}
    \begin{tikzcd}
      \sD(\bn) \ar[r, "\cong"] \ar[ddd, dotted,"\sD(\sigma_i)"']
      &\sD(\mathbf{n})\otimes \mathcal{I}  \ar[d," \mathrm{id} \otimes \eta"] \\
      &\sD(\mathbf{n}) \otimes \sC(\mathbf{0}) \ar[d, "\cong"] \ar[ldd, dotted, bend right, "\sD(\sigma_i)_{0}"']\\
      & \sD(\mathbf{n})\otimes \mathcal{I} \otimes \cdots \otimes \sC(\mathbf{0}) \otimes \cdots \otimes \mI \ar[d,"\mathrm{id} \otimes
       \eta_{\sC} \otimes \cdots \otimes \mathrm{id} \otimes \cdots \otimes \eta_{\sC}  "] \\
      \sD(\mathbf{n-1})&  \sD(\mathbf{n}) \otimes \sC(\mathbf{1}) \otimes \cdots \otimes \sC (\mathbf{0}) \otimes \cdots \otimes \sC(\mathbf{1}) \ar[l,"\psi^{\sigma_i}"] 
    \end{tikzcd}
\end{equation}

    To promote the structure further to $\sD \in \mathrm{RMod}_\sC [\Lambda^{op}[\sV]]$, we need a series of morphisms 
    \begin{equation}
        \phi^\lambda_{k, n_1 ,\cdots , n_k} : \sD(\mathbf{k}) \otimes \sC(\mathbf{n_1})\otimes\cdots \sC(\mathbf{n_k}) \xrightarrow{} \sD(\mathbf{n})  
    \end{equation}
    where $n \leqslant n_1 + \cdots + n_k ; \lambda \in \Lambda(\mathbf{n},\mathbf{n_1}+ \cdots +
    \mathbf{n_k})$. These data restrict to $\psi^\sigma_{k, n_1 ,\cdots , n_k} $ on $\Sigma
    (\mathbf{n_1} + \cdots + \mathbf{n_k} , \mathbf{n_1} + \cdots + \mathbf{n_k})$ and
    they must satisfy the following additional requirements:
    
\begin{enumerate}
\item \label{condition1} for $\lambda_1 \in \Lambda(\mathbf{m}, \mathbf{n_1} + \cdots + \mathbf{n_k})$, $ \lambda_2 \in \Lambda (\mathbf{l}, \mathbf{m})$, we have
\begin{equation*}
  \begin{tikzcd}
    \sD(\mathbf{k}) \otimes \sC(\mathbf{n_1})\otimes\cdots \otimes\sC(\mathbf{n_k}) \ar[r,"\phi^{\lambda_1}"] \ar[rd , "\phi^{\lambda_1 \lambda_2}"']& \sD(\mathbf{m}) \ar[d,"\sD(\lambda_2)"] \\
      & \sD(\mathbf{l})
  \end{tikzcd}
\end{equation*}
\item \label{condition2} for $\lambda \in \Lambda(\mathbf{m}, \mathbf{n_1} + \cdots + \mathbf{n_{k-1}})$, $\sigma_i \in \Lambda(\mathbf{k}, \mathbf{k-1})$ that omits the $i$-th target, we have
\begin{equation*}\small
    \begin{tikzcd}
      \sD(\mathbf{k}) \otimes \sC(\mathbf{n_1})\otimes\cdots \otimes\mI \otimes \cdots\otimes\sC(\mathbf{n_{k-1}})
      \ar[r,"\eta"] \ar[d,"\sD(\sigma_i) \otimes \cong"']
      & \sD(\mathbf{k}) \otimes \sC(\mathbf{n_1})\otimes\cdots \otimes \sC(\mathbf{0}) \otimes \cdots \otimes\sC(\mathbf{n_{k-1}} ) \ar[d,"\phi^{\lambda}_{k,n_{1},\cdots,0,\cdots,n_{k-1}}"] \\
    \sD(\mathbf{k-1}) \otimes \sC(\mathbf{n_1})\otimes\cdots\otimes\sC(\mathbf{n_{k-1}}) \ar[r,"\phi^\lambda_{k-1,n_1,\cdots,n_{k-1}}"'] & \sD(\mathbf{m})  
    \end{tikzcd}
\end{equation*}
\end{enumerate}
where $\mathcal{I}$ is inserted in the $i$-th term.

We already have $\phi^\sigma_{k,n_1,\cdots , n_k} =\psi^\sigma_{k,n_1,\cdots,n_k}$ for $\sigma \in \Sigma_n$,
and we use diagram \autoref{condition1} to define $\phi^\lambda_{k,n_1,\cdots , n_k}$ for
general $\lambda$.  It
remains to show that these data satisfy diagram \autoref{condition2}, and because of
diagram \autoref{condition1} it suffices to show for $m = n_1 + \cdots + n_{k-1}$ and $ \lambda \in \Sigma(\mathbf{m},\mathbf{n_1}+ \cdots +
\mathbf{n_{k-1}})$. From \autoref{eq:8}, we have the dotted arrow in the diagram below

and the upper triangle commutes. The lower triangle commutes by the
associativity of $\sD$ as a module over $\sC$ in $\Sigma^{op}[\sV]$.
\begin{equation*}
    \begin{tikzcd}
      \sD(\mathbf{k}) \otimes \sC(\mathbf{n_1})\otimes\cdots \sC(\mathbf{n_{k-1}}) \ar[r,"
      \mathrm{id} \otimes\eta \otimes \mathrm{id}"]
      \ar[d,"\sD(\sigma_i) \otimes \mathrm{id}"']
      & \sD(\mathbf{k}) \otimes \sC(\mathbf{n_1})\otimes\cdots \otimes \sC(\mathbf{0}) \otimes \cdots \sC(\mathbf{n_{k-1}} ) \ar[d,"\phi^{\lambda}_{k,\cdots}=\psi^{\lambda}_{k,\cdots}"] \ar[ld,dotted,"\sD(\sigma_i)_0"]\\
    \sD(\mathbf{k-1}) \otimes \sC(\mathbf{n_1})\otimes\cdots \sC(\mathbf{n_{k-1}}) \ar[r,"\phi^\lambda_{k-1,\cdots}=\psi^{\lambda}_{k-1,\cdots}"'] &
    \sD(\mathbf{n_1}+\cdots+ \mathbf{n_{k-1}})  
    \end{tikzcd}
  \end{equation*}
  
  Alternatively, one can show that the following diagram of forgetful maps is a
  pullback square, and reduce the theorem to the case $\sC = \sI_1$, which is
  shown in \cite[5.1.8]{Fresse0}.
    \begin{equation*}
        \begin{tikzcd}
            \mathrm{RMod}_\sC[\Lambda^{op}[\sV]] \ar[r]\ar[d] &
            \mathrm{RMod}_{\sI_1}[\Lambda^{op}[\sV]] \ar[d] \\
            \mathrm{RMod}_\sC[\Sigma^{op}[\sV]] \ar[r]
            & \mathrm{RMod}_{\sI_1}[\Sigma^{op}[\sV]] .
        \end{tikzcd}
      \end{equation*}
\end{proof}

\begin{cor}
For  $\sD \in \Lambda^{op}[\sV]$, the functor $- \odot_{\Sigma} \sD: \Sigma^{op}[\sV] \to \Sigma^{op}[\sV]$ factors through
 $\Lambda^{op}[\sV]$. 
\end{cor}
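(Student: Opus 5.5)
The plan is to exhibit $- \odot_{\Sigma} \sD$ as landing in right $\sI_1$-modules in $\Sigma^{op}[\sV]$, and then apply \autoref{pullback} with $\sC = \sI_1$.

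First I need $\sD$ to be a right $\sI_1$-module in $\Sigma^{op}[\sV]$. By \autoref{pullback}, $\Lambda^{op}[\sV] \cong \Rmod_{\sI_1}[\Lambda^{op}[\sV]]$, and forgetting gives a right action
\[ \sD \odot_{\Sigma} \sI_1 \xrightarrow{q} \sD \odot_{\Lambda} \sI_1 \cong \sD. \]
Here $\sI_1$ is regarded as a monoid in $\Sigma^{op}[\sV]$ via the forgetful map $\Mon(\Lambda^{op}[\sV]_{\sI_0}) \to \Mon(\Sigma^{op}[\sV])$. Note that $\sD$ itself need not carry a base map, since the right-module side only uses the $\Lambda$-functoriality of $\sD$.

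Next, for any $\sE \in \Sigma^{op}[\sV]$, I apply the $\Sigma$-version of \autoref{prop:module-dot}. It says $\sE \odot_{\Sigma} \sD$ is a right $\sI_1$-module, with structure map
\[ (\sE \odot_\Sigma \sD)\odot_\Sigma \sI_1 \cong \sE \odot_\Sigma (\sD \odot_\Sigma \sI_1) \to \sE \odot_\Sigma \sD, \]
using the associativity from \autoref{thm:MZZ}\autoref{item:MZZ-1}. This construction is natural in $\sE$, because the associator is natural and the action on $\sD$ is fixed. So we get a functor $\Sigma^{op}[\sV] \to \Rmod_{\sI_1}[\Sigma^{op}[\sV]]$ whose composite with the forgetful functor is $- \odot_\Sigma \sD$.

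Finally, I compose with the inverse of the isomorphism $\Rmod_{\sI_1}[\Sigma^{op}[\sV]] \cong \Lambda^{op}[\sV]$ from \autoref{pullback}. This gives a functor $\Sigma^{op}[\sV] \to \Lambda^{op}[\sV]$ which, after forgetting to $\Sigma^{op}[\sV]$, recovers $- \odot_\Sigma \sD$. That is exactly the claimed factorization.

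The only point I expect to need care is the compatibility between the isomorphism $\Lambda^{op}[\sV] \cong \Rmod_{\sI_1}[\Sigma^{op}[\sV]]$ and the forgetful functors. Concretely, under it a $\Lambda$-object is sent to its underlying $\Sigma$-object, so that the factorization really is through the forgetful functor. This is immediate from the construction in the proof of \autoref{pullback}: the $\Sigma$-structure is left untouched, and the maps $\sD(\sigma_i)$ are built from the action as in \eqref{eq:8}.

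Optionally, one can make the resulting $\Lambda$-structure explicit. The face map $\sigma_i$ acts on $(\sE\odot_\Sigma\sD)(\bn)$ by applying the corresponding face map of $\sD$ to the appropriate tensor factor $\sD(\mathbf{n_j})$ in $\sE(\bk)\otimes\sD(\mathbf{n_1})\otimes\cdots\otimes\sD(\mathbf{n_k})$.
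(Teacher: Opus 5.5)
Your proposal is correct and follows the paper's proof: you identify $\Lambda^{op}[\sV]$ with $\Rmod_{\sI_1}[\Sigma^{op}[\sV]]$ via \autoref{pullback}, then apply \autoref{prop:module-dot} to make $\sE \odot_\Sigma \sD$ a right $\sI_1$-module. Your additional checks, namely naturality in $\sE$ and the fact that the isomorphism commutes with the forgetful functors to $\Sigma^{op}[\sV]$, just make explicit what the paper's two-line proof leaves implicit.
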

\begin{proof}
  We use \autoref{pullback} to identify $\Lambda^{op}[\sV]$ with right
  $\sI_1$-modules in $\Sigma^{op}[\sV]$, then we use \autoref{prop:module-dot}.

\end{proof}

\medskip
For an operad $\sC$ in $\sV$, there is a notion of a $\sC$-algebra in
$\sV$. Left $\sC$-modules generalizes $\sC$-algebras in the following way. There
is a functor $\iota_{0}: \sV \to \Lambda^{op}[\sV]$ defined by
$\iota_0(X)(\bn) = \begin{cases}
                  X &  n=0;\\
                  \emptyset & n \geq 1.
\end{cases}$ We also write $\iota_0$ for the functor $\sV \to \Sigma^{op}[\sV]$ from
the same construction that lands in $\Sigma$-sequences.
\begin{prop}
  Let $\sV_{\mI}$ denote the category $\sV$ under its unit $\mI$.
  For a based operad $\sC$ in $\sV$, the functor $\iota_0$ restricts to
  $\mathrm{Alg}_{\sC}[\sV_\mI] \to \mathrm{LMod}_{\sC}[\Lambda^{op}[\sV]_{\sI_0}]$;
  for an operad $\sC$ in $\sV$, the functor $\iota_0$ restricts to
  $\mathrm{Alg}_{\sC}[\sV] \to  \mathrm{LMod}_{\sC}[\Sigma^{op}[\sV]]$. 
\end{prop}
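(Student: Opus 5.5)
The plan is to compute $\sC \odot \iota_0(X)$ directly and check that a $\sC$-algebra structure on $X$ is exactly the data of a left module action. For any $\sE$ concentrated in level $0$, the Day powers are also concentrated in level $0$. Indeed, from the explicit formula $(\sD\boxtimes \sE)(\bn) \cong \coprod_{j+k=n}\sD(\bj)\otimes\sE(\bk)\otimes_{\SI_j\times\SI_k}\SI_n$, an induction gives $\iota_0(X)^{\boxtimes k} = \iota_0(X^{\otimes k})$ for $k\ge1$, and $\iota_0(X)^{\boxtimes 0}=\sI_0=\iota_0(\mI)$. Here we use that $\otimes$ commutes with colimits (in particular $\emptyset\otimes A=\emptyset$), since $\sV$ is closed. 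In the $\Sigma$-case the $\SI_k$-action on $X^{\otimes k}$ is by permuting factors.

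With this in hand, the Kelly product is
\[
(\sC\odot\iota_0(X))(\bn) = \int^{k}\sC(\bk)\otimes \iota_0(X^{\otimes k})(\bn).
\]
This vanishes for $n\ge1$, and at $n=0$ it equals $\int^{k}\sC(\bk)\otimes X^{\otimes k}$, which is precisely the underlying object of the monad $\mathbb{C}X$ associated to $\sC$. Over $\Sigma$ this coend is $\coprod_k \sC(\bk)\otimes_{\SI_k}X^{\otimes k}$. Over $\Lambda$ it is the further quotient by the relations $\sC(\sigma_i)$ versus inserting the base point $\mI\to X$. Here $X$ must lie under $\mI$ so that $\iota_0(X)$ is a $\Lambda$-sequence, i.e.\ so that the contravariant functoriality in $k$ makes sense. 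This quotient is the reduced monad on $\sV_\mI$ from \cite{MZZ}.

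Consequently a map $\sC\odot\iota_0(X)\to\iota_0(X)$ is the same as a map $\theta\colon\int^k\sC(\bk)\otimes X^{\otimes k}\to X$, equivalently a family $\sC(\bk)\otimes X^{\otimes k}\to X$. In the $\Sigma$-case this family must be equivariant. In the $\Lambda$-case it must also be compatible with degeneracies and the base point, which is exactly the based-operad-algebra condition that $\sC(0)\ni\eta$ acts by the base point. Under this identification, the unit axiom for the left module (via $\sI_1\odot\iota_0(X)\cong\iota_0(X)$) is the unit axiom for an algebra. The associativity axiom, through $(\sC\odot\sC)\odot\iota_0(X)\cong\sC\odot(\sC\odot\iota_0(X))$ from \autoref{thm:MZZ}, becomes the algebra associativity diagram $\mathbb{C}\mathbb{C}X\to\mathbb{C}X\to X$. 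Morphisms correspond since $\iota_0$ is fully faithful on level $0$.

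The main obstacle is bookkeeping the associativity isomorphism in level $0$. We must check that $\sC\odot(\sC\odot\iota_0(X))$ evaluated at $\mathbf 0$ is $\int^k\sC(\bk)\otimes(\mathbb{C}X)^{\otimes k}$, and that the associator matches the operadic composition used to define the monad multiplication. For this I would use the identification of $(\sC\odot\sC)$ with operadic composition from \autoref{thm:MZZ}\autoref{item:MZZ-2},\autoref{item:MZZ-4}, together with the fact that $\iota_0(\mathbb{C}X)^{\boxtimes k}=\iota_0((\mathbb{C}X)^{\otimes k})$ by the first step. In the based case, one must additionally check that the base map of $\sC\odot\iota_0(X)$, namely $\sI_0\to\sC\odot\sI_0\to\sC\odot\iota_0(X)$, is sent by $\theta$ to the base point of $X$. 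This follows from the unit condition on $\sC(0)$ acting through $\eta$.
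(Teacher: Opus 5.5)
Your proof is correct and follows essentially the same route as the paper, which just cites \cite[Lemma 6.5]{MZZ} for the $\Lambda$ case and notes that the same argument works over $\Sigma$; your identification $\sC\odot\iota_0(X)\cong\iota_0(\mathbb{C}X)$, under which the left module axioms become the monad-algebra axioms, is a self-contained unpacking of that citation. One small precision: compatibility of the action with the maps $\sC(\sigma_i)$ is not literally the condition that $\eta\in\sC(\mathbf{0})$ acts as the base point. It follows from algebra associativity together with that condition.
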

\begin{proof}
  For $\Lambda$-sequences this is \cite[Lemma 6.5]{MZZ}. The same argument works for $\Sigma$-sequences.

\end{proof}

\begin{exmp} We list some examples of monoids, box products, and Kelly products.
\begin{enumerate}
\item $\sI_0$ is not a monoid as there is no unit map $\sI_1 \to \sI_0$.
\item $\sI_1^{\Sigma}$ is a monoid in $\Sigma^{op}[\sV]$.
  $$\Lmod_{\sI_1^{\Sigma}}[\Sigma^{op}[\sV]] = \Rmod_{\sI_1^{\Sigma}} [\Sigma^{op}[\sV]]= \Sigma^{op}[\sV]$$
\item $\sI_1$ is a monoid in $\Sigma^{op}[\sV]$ and $\Lambda^{op}_{\mI}[\sV]$.
  \begin{align*}
    &\Lmod_{\sI_1} [\Lambda^{op}_{\mI}[\sV]]= \Rmod_{\sI_1} [\Lambda^{op}_{\mI}[\sV]]=
    \Lambda^{op}_{\mI}[\sV], \\
    &\Rmod_{\sI_1}[\Sigma^{op}[\sV]]=\Lambda^{op}[\sV],\\
 &  \Lmod_{\sI_1}[\Sigma^{op}[\sV]] = \Sigma^{op}[\sV]_{\comm /}
  \end{align*}
\item The commutativity operad $\comm$: $\comm(\bn) = \mI$ and the structure
  maps are all isomorphic to the identity map on $\mI$.
  \begin{align*}
    (\comm \boxtimes \comm) (\bn) & = \oplus_{m=0}^n\mI[\Sigma_n/\Sigma_{m} \times \Sigma_{n-m}] \\
    &=  \mI[\text{partition of }n \text{ into 2 ordered sets}], \\
(\comm \odot_{\Sigma} \comm) (\bn) & = \mI[\text{partition of }n \text{ into unordered sets}], \\
(\comm \odot_{\Lambda} \comm) (\bn) & = \mI[\text{partition of }n \text{ into unordered nonempty sets}]
\end{align*}
\item The associativity operad $\asso$: $\asso(\bn) = \mI[\Sigma_n]$ and the
  structure maps are induced by concatinating and
  composing permutation of letters.
\begin{align*}
\asso \boxtimes \asso & = \asso \\    
  \asso \odot_{\Sigma} \asso & = \oplus_{\bN}\asso \\ 
  \asso \odot_{\Lambda} \asso & = \asso
\end{align*}
\end{enumerate}
\end{exmp}

\begin{prop} \label{prop:base-operad}
  Let $\sC$ be an operad in a Cartesian monoidal category, then $\sB = \sC \boxtimes \comm$ is also an operad.
  When $\sC$ is based, $\sB$ is also based.
\end{prop}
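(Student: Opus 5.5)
We will prove that $\sB = \sC \boxtimes \comm$ is a monoid in $(\Sigma^{op}[\sV],\odot_\Sigma,\sI_1^\Sigma)$ and then invoke \autoref{thm:MZZ}\autoref{item:MZZ-2}. In the based case we use $\Lambda^{op}[\sV]_{\sI_0}$ and \autoref{thm:MZZ}\autoref{item:MZZ-4} instead.

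\textbf{Multiplication.} The multiplication will be the composite
\begin{equation*}
(\sC \boxtimes \comm) \odot (\sC \boxtimes \comm) \cong (\sC \odot \sB) \boxtimes (\comm \odot \sB) \longrightarrow (\sC\odot\sC)\boxtimes(\comm\odot\comm) \longrightarrow \sC \boxtimes \comm .
\end{equation*}
\begin{itemize}
\item The first isomorphism is the strong monoidality of $\odot$ in the first variable (\autoref{thm:MZZ}\autoref{item:MZZ-1}, \autoref{item:MZZ-3}).
\item The middle arrow applies $\sC \odot -$ and $\comm\odot -$ to the projections $\sB=\sC\boxtimes\comm \to \sC$ and $\sB \to \comm$.
\item The last arrow is $\mu_\sC\boxtimes\mu_{\comm}$.
\end{itemize}

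The projections need care. In the Cartesian setting $\comm(\bn)=\ast$ is terminal. So $\sC\boxtimes\comm \to \sC\boxtimes \sI_0 \cong \sC$ would need a map $\comm\to\sI_0$, which does not exist levelwise. Hence I would not use projections. Instead I would use the diagonal map $\sB \to \sB\boxtimes\sB$, or rather the oplax structure of \autoref{thm:MZZ}\autoref{item:MZZ-5}:
\begin{equation*}
\sB\odot\sB = (\sC\boxtimes\comm)\odot\sB \cong (\sC\odot\sB)\boxtimes(\comm\odot\sB),
\end{equation*}
and then $\sC\odot\sB = \sC\odot(\sC\boxtimes\comm)$.

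The real need is a map $\sC\odot(\sC\boxtimes\comm)\to \sC\boxtimes\comm$, which makes $\sC\boxtimes\comm$ a left $\sC$-module. It comes from
\begin{equation*}
\sC\odot(\sC\boxtimes\comm)\to (\sC\odot\sC)\boxtimes(\sC\odot\comm)\to \sC\boxtimes\comm,
\end{equation*}
using oplaxness in the first variable, then $\mu_\sC$, and $\sC\odot\comm\to\comm$ from terminality of $\comm$. Symmetrically, $\comm\odot\sB\to\comm$ exists since $\comm$ is terminal.

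Putting these together gives $\sB\odot\sB\to\sB\boxtimes\comm\to\sB$, where the last map is $\sC\boxtimes(\comm\boxtimes\comm)\to\sC\boxtimes\comm$ induced by the commutative monoid structure of $\comm$ under $\boxtimes$. Concretely, this is the map collapsing to the terminal object.

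\textbf{Unit and axioms.} The unit is $\sI_1 \to \sC\boxtimes\comm$, given by $\eta_\sC\boxtimes(\sI_0\to\comm)$ composed with $\sC\boxtimes\sI_0\cong\sC$. Associativity and unitality will be checked levelwise. Explicitly,
\begin{equation*}
\sB(n)=\coprod_{S\subset\{1,\dots,n\}}\sC(|S|),
\end{equation*}
so an element is a pair $(S,c)$. Composition $(S,c)\circ((S_1,c_1),\dots,(S_k,c_k))$ substitutes, for the indices $i\in S$, $c_i$ into $c$, and discards $c_i$ for $i\notin S$ (using Cartesianness to delete). The new subset is $\bigcup_{i\in S}S_i$, placed in the blocks. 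In this description associativity follows from associativity of $\sC$ and of union of subsets, and equivariance is clear. Discarding inputs is legitimate because deleting a factor is always possible in a Cartesian category, so no basepoint is needed.

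\textbf{Based case.} If $\eta:\ast\to\sC(0)$ is given, then $\sB(0)=\sC(0)$ inherits $\eta$, and compatibility with composition is immediate.

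\textbf{Main obstacle.} The main obstacle is verifying associativity coherently at the level of the abstract formula. There the deletion step relies on naturality of the diagonal and terminal maps in the Cartesian category. I would resolve it by working with the explicit elementwise description above, checking the three operad axioms directly.
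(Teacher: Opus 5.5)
Your final construction, after you discard the nonexistent projections $\sB\to\sC$, is the paper's own proof. The paper's composite $\sB\odot\sB\cong(\sC\odot\sB)\boxtimes(\comm\odot\sB)\to(\sC\odot\sC)\boxtimes(\sC\odot\comm)\boxtimes(\comm\odot\sB)\to(\sC\odot\sC)\boxtimes\comm\to\sB$ agrees with yours because any two maps into the terminal $\comm$ coincide, and your unit is the paper's $\sI_1^{\Sigma}\cong\sI_1^{\Sigma}\boxtimes\sI_0\to\sC\boxtimes\comm$ (it should be $\sI_1^{\Sigma}$, not $\sI_1$, in the $\Sigma$ setting). The paper merely asserts associativity and unitality and handles the based case by rerunning the argument with $\odot_\Lambda$ and $\sI_1$. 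Your levelwise check via $\sB(\bn)=\coprod_S\sC(|S|)$, which is the paper's subsequent remark, and your observation that $\sB(\mathbf{0})=\sC(\mathbf{0})$ inherits $\eta$ are valid and slightly more explicit replacements.
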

\begin{proof}
 In a Cartesian monoidal category, $\comm$ is the terminal object in both
 $\Sigma^{op}[\sV]$ and $\Lambda^{op}[\sV]$. The multiplication map of $\sB$ is given by ($\odot=\odot_{\Sigma}$)
\begin{align*}
\sB \odot \sB & \cong (\sC \odot (\sC \boxtimes \comm)) \boxtimes (\comm \odot (\sC \boxtimes \comm))
  &\text{by \autoref{thm:MZZ}\autoref{item:MZZ-1}}\\
  & \to (\sC \odot \sC) \boxtimes (\sC \odot \comm) \boxtimes (\comm \odot (\sC \boxtimes \comm)) & \text{by \autoref{thm:MZZ}\autoref{item:MZZ-5}}\\
  & \to (\sC \odot \sC) \boxtimes \comm & \comm \text{ is terminal}\\
  & \to  \sC \boxtimes \comm =  \sB. & \text{structure map of }\sC
\end{align*}
The unit map
  $\eta: \sI_1^{\Sigma} \cong \sI_1^{\Sigma} \boxtimes \sI_0 \to \sC \boxtimes \comm$
is the $\boxtimes$-product of the unit map $\sI_1^{\Sigma} \to \sC$ and the unique map $\sI_0 \to \comm$.
This is associative and unital.
When $\sC$ is based, just replace $\odot_{\Sigma}$ by $\odot_{\Lambda}$ and $\sI_1^{\Sigma}$ by
$\sI_1$.

\end{proof}
\begin{rem}  \label{rmk:BvsC}
More explicitly, 
\begin{align*}
  \sB(0) & = \sC(0), \\
  \sB(1) & = \sC(0) \sqcup \sC(1), \\
 \text{ and } \sB(n) & = \sqcup_{S \subset \{1,2,\cdots,n\}} \sC(|S|).
\end{align*}
Take any $S \subset \{1,\cdots,k\}$ and $T_i \subset \{1,\cdots,n_{i}\}$ for $1 \leq i \leq k$. Let
$T \subset \{1, \cdots, \Sigma_{i=1}^kn_i\}$ be the union $\cup_{i \in S}(T_i + \Sigma_{j<i, j\in S}|T_j|)$,
where $T_i + n $ is $\{x+n|x\in T_i\}$ for a number $n$. There are the multiplication maps 
\begin{equation*}
  \begin{tikzcd}
      \sC(|S|) \times \times_{i=1}^k\sC(|T_i|) \ar[rr,"\text{project to $*$}","\text{if $i \not\in S$}"']
      &  &\sC(|S|) \times \times_{i \in S}\sC(|T_i|)\ar[r,"\mu_{\sC}"]& \sC(|T|),
  \end{tikzcd}
\end{equation*}
coproducts of which give the multiplication map
\begin{equation*}
\mu_{\sB}: \sB(k) \times \times_{i=1}^k \sB(n_i) \to \sB(\Sigma_{i=1}^kn_i).
\end{equation*}
\end{rem}
\begin{rem}
  Restriction along
  $$\sC \cong \sC \boxtimes \sI_0 \to \sC \boxtimes \comm = \sB$$
  endows $\sB$-algebras the structure of $\sC$-algebras. In contrast, a $\sC$-algebra $X$ can be made into a $\sB$-algebra via the same maneuver as
  in \autoref{rmk:BvsC}. Namely, the structure maps
  $\theta_{\sB}: \sB(n)\times X^n \to  X$ are coproducts of
\begin{equation*}
  \begin{tikzcd}
    \sC(|S|) \times \times_{i=1}^n X \ar[rr,"\text{project to $*$}","\text{if $i \not\in S$}"']
      &  &\sC(|S|) \times \times_{i \in S}X \ar[r,"\theta_{\sC}"]& X
  \end{tikzcd}
\end{equation*}
for $S \subset \{1,2,\cdots,n\}$. This embeds the category of $\sC$-algebras as a fully
faithful subcategory of $\sB$-algebras.

Although $\sI_0$ is not an operad in our sense, it still makes sense to consider
$\sC =\sI_0$ as a heuristic example.
The category of $\sI_0$-algebras are based objects. It embeds into
the category of commutative algebras as the trivial ones, where the
commutative algebra structure maps are all in the form of $X^n \to * \to X$. 
\end{rem}

\begin{rem}
 There are maps of operads $\comm \to \sB \to \comm$. Here, the first map is
 $$\comm \cong \sI_0 \boxtimes \comm \to \sC \boxtimes \comm = \sB,$$
 or explicitely $* \to \sC(0) \to \sB(n)$ at each level, and the second map uses
 that $\comm$ is terminal. This embeds commutative algebras as a faithful subcategory of $\sB$-algebras.
\end{rem}
\subsection{Relation to category of operators}
Let $\fins$ denote the category with objects $\bn$ and based maps; let
$\fin \subset \fins$ denote the category with objects $\bn$ and only morphisms $\phi: \bm \to \bn$ such that
$\phi^{-1}(0) = \{0\}$ (May--Thomason \cite{MT} call these effective morphisms and
Lurie \cite{lurie2009higher} call these active morphisms.)
Denote $\Pi$ the subcategory of $\fins$ with morphisms $\phi : \mathbf{m} \xrightarrow{} \mathbf{n}$ such that each $\phi^{-1}(j)$ has at most one element for every $1 \leqslant j \leqslant n$.  Note that $\fins$ is the skeleton of the category of finite
based sets, $\fin$ is the skeleton of the category of finite sets and $\Pi \cap \fin$ is exactly $\Lambda$.

Recall that $\sV$ is a complete and cocomplete closed monoidal category in this
section. When category of operators is involved, we assume further that $\sV$ is Cartesian monoidal.
A category is considered as a $\sV$-enriched category by tensoring the
morphism sets with $\mI$.

\begin{defn}(\cite[Section 1]{Rant2})
 A $\sV$-category of operators $\sD$ is a category enriched over $\sV$ with objects the
 same as $\fins$, such that the inclusion $\Pi \xrightarrow{} \fins$ factors through as an inclusion $\Pi \xrightarrow{} \sD$ and a surjection $\sD \xrightarrow{} \fins$.
\end{defn}
\begin{defn}
 For an operad $\sC$ in a Cartesian monoidal category $\sV$, there is an associated $\sV$-enriched symmetric monoidal category $\widehat{\sC}$ with
 morphisms
 \begin{equation*}
        \widehat{\sC} (\mathbf{n},\mathbf{m}) = \coprod_{\phi \in \fins(\mathbf{n},\mathbf{m})}  \mathscr{C}(\phi^{-1}(1)) \otimes \cdots \otimes \mathscr{C}(\phi^{-1}(m)).
      \end{equation*}
 Identity maps are given by $\mathrm{id} \in \fins(\bn,\bn)$ and each unit $\mI \to \sC (\mathbf{1}) $, and the composition is induced from the
 unit $\mI \to \sC(\mathbf{1})$, projection maps $\sC(\bn) \to *$ and the operad structure maps $\gamma$.
\end{defn}

  \begin{exmp}
    We have $\Pi = \widehat{\sI_1}$, $\fins = \widehat{\comm}$. 
  \end{exmp}
  When $\sC$ is a unital operad in a Cartesian
  monoidal category $\sV$ with $\sC(\bn) \neq \varnothing$ for all $\bn$,
  $\widehat{\sC}$ is a category of operators where the map $\Pi \to \widehat{\sC}$
  is induced by $\sI_1 \to \sC$ and the map $\widehat{\sC} \to \fins$ is induced
  by $\sC \to \comm$.
\begin{defn}\cite[Section 12]{MZZ}
  Let $\sC$ be an operad in $\sV$. The permutative envelope of $\sC$ is the
  category $\overline{\sC}$ with objects $\bn$ and morphisms given by
\begin{equation*}
\overline{\sC}(\bm, \bn) = \sC^{\boxtimes n}(\bm).
\end{equation*}
\end{defn}
\noindent See \cite{MZZ} for compositions in 
$\overline{\sC}$.  Explicitly, there is
\begin{equation*}
    \overline{\sC} (\mathbf{n},\mathbf{m}) = \coprod_{\phi \in \fin(\mathbf{n},\mathbf{m})}  \mathscr{C}(\phi^{-1}(1)) \otimes \cdots \otimes \mathscr{C}(\phi^{-1}(m))
\end{equation*}
and $\overline{\sC}$ is a ($\sV$-enriched symmetric monoidal) subcategory of $\widehat{\sC}$.

\begin{exmp}
  We have $ \Lambda = \overline{\sI_1}$, $\fin = \overline{\comm}$.
\end{exmp}

\begin{prop} \label{prop:C-hat-comm}
    For any based operad $\sC$, we have isomorphisms of $\Lambda^{op}$-sequences 
    \begin{equation}
        \overline{\sC} (-, \mathbf{n}) \cong \sC^{\boxtimes n};
    \end{equation}
    \begin{equation}\label{eq:9}
        \widehat{\sC} (-,\mathbf{n}) \cong \overline{\sC} (-, \mathbf{n}) \boxtimes \comm \cong \sC^{\boxtimes n} \boxtimes \comm.
    \end{equation}
\end{prop}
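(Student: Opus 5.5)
We compare both sides levelwise as $\Sigma$-sequences, and then check that the $\Lambda^{op}$-actions agree.

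\textbf{First isomorphism.} By the explicit formula for the Day convolution (the $\Sigma$ and $\Lambda$ versions agree by \cite[Theorem 3.4]{MZZ}), iterating gives
\[
\sC^{\boxtimes n}(\bm) \cong \coprod_{j_1+\cdots+j_n=m} \sC(\mathbf{j_1})\otimes\cdots\otimes\sC(\mathbf{j_n})\otimes_{\Sigma_{j_1}\times\cdots\times\Sigma_{j_n}}\Sigma_m .
\]
The coset $\Sigma_m/(\Sigma_{j_1}\times\cdots\times\Sigma_{j_n})$ indexes the maps $\phi\in\fin(\bm,\bn)$ with $|\phi^{-1}(i)|=j_i$. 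Hence this coproduct is exactly $\coprod_{\phi}\sC(\phi^{-1}(1))\otimes\cdots\otimes\sC(\phi^{-1}(n))=\overline{\sC}(\bm,\bn)$. This is in fact the definition $\overline{\sC}(\bm,\bn)=\sC^{\boxtimes n}(\bm)$, so the only content is naturality in $\bm$ for $\Lambda^{op}$. In $\overline{\sC}$, a based injection $\sigma_i\colon\mathbf{m-1}\to\bm$ acts by precomposition with the morphism of $\overline{\sC}$ coming from $\Lambda=\overline{\sI_1}\subset\overline{\sC}$. Unwinding the composition, this plugs the unit $\eta\colon\mI\to\sC(\mathbf 0)$ into the $i$-th input. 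In $\sC^{\boxtimes n}$, the $\Lambda^{op}$-structure of the Day convolution likewise restricts the factor containing $i$ along the corresponding $\sigma$, which is the $\Lambda$-structure of $\sC$ given by $\eta$. So the two actions agree.

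\textbf{Second isomorphism.} I would first write $\widehat{\sC}(\bm,\bn)$ by factoring each based map $\phi\colon\bm\to\bn$ uniquely. Let $S=\{x\neq 0:\phi(x)\neq 0\}$. Then $\phi$ is the identification $\bm\cong\mathbf{|S|}\vee\mathbf{(m-|S|)}$, up to the shuffle determined by $S$, followed by an effective map $\phi|_S\in\fin(\mathbf{|S|},\bn)$ on the first wedge summand and the collapse of the second summand to $0$. This gives
\[
\widehat{\sC}(\bm,\bn)\cong\coprod_{j+k=m}\overline{\sC}(\bj,\bn)\times\Sigma_m/(\Sigma_j\times\Sigma_k)
\cong\coprod_{j+k=m}\overline{\sC}(\bj,\bn)\otimes\comm(\bk)\otimes_{\Sigma_j\times\Sigma_k}\Sigma_m ,
\]
using $\comm(\bk)=\mI=*$. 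The right-hand side is $(\overline{\sC}(-,\bn)\boxtimes\comm)(\bm)$ by the explicit Day convolution formula, and composing with the first isomorphism gives $\sC^{\boxtimes n}\boxtimes\comm$.

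\textbf{Main obstacle.} The delicate step is checking that the $\Lambda^{op}$-actions match under this decomposition. Precomposing by $\sigma_i\colon\mathbf{m-1}\to\bm$ behaves differently according to whether $i\in S$ or $i\notin S$.
\begin{itemize}
\item If $i\notin S$, the input $i$ is collapsed. On the $\widehat{\sC}$ side, $\phi\circ\sigma_i$ just has one fewer collapsed point and the $\sC$-label is unchanged. On the Day convolution side, this is the $\Lambda$-structure of $\comm$, namely the identity of $\mI$.
\item If $i\in S$, the composition in $\widehat{\sC}$ plugs $\eta$ into the corresponding input of $\sC(\phi^{-1}(\phi(i)))$. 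This matches the $\Lambda$-structure on the $\overline{\sC}(-,\bn)$ factor.
\end{itemize}
I would verify both cases directly from the description of composition in $\widehat{\sC}$, which uses the unit, the projections to $*$ and $\gamma$. Since the morphisms of $\Lambda$ are generated by permutations and the $\sigma_i$, it suffices to check these generators. Equivariance under permutations is clear from the coset description.
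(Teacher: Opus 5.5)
Your proposal is correct and follows essentially the same route as the paper. The paper cites \cite[Lemma 12.13]{MZZ} for the first isomorphism. For the second, it computes $\overline{\sC}(-,\bn)\boxtimes\comm$ with the $\Sigma$-Day convolution formula and matches it with $\widehat{\sC}(\bm,\bn)$ by splitting a based map $\phi\in\fins(\bm,\bn)$ along the partition $\bm = S\vee T$, where $T$ is collapsed to the base point and $\phi|_S$ is effective, exactly as you do; your generator-by-generator check of the $\Lambda^{op}$-actions (separating $i\in S$ from $i\notin S$) is correct and fills in a compatibility the paper leaves implicit.
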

\begin{proof}
  We have $\overline{\sC} (-, \mathbf{n}) = \sC^{\boxtimes n}$ from
  \cite[Lemma 12.13]{MZZ}. For $\widehat{\sC}$, we have 
  {\small
  \begin{align*}
    \overline{\sC} (\mathbf{m}, \mathbf{n}) \boxtimes \comm =
    & \int^{\mathbf{m}_1 , \mathbf{m}_2} \Sigma(\mathbf{m},\mathbf{m}_1 +
      \mathbf{m}_2) \otimes \overline{\sC} (\mathbf{m}_1, \mathbf{n}) \otimes \comm
      (\mathbf{m}_2) \\
    = & \int^{\mathbf{m}_1 , \mathbf{m}_2} \Sigma(\mathbf{m},\mathbf{m}_1 +
        \mathbf{m}_2) \otimes \coprod_{\phi \in \fin(\mathbf{m}_1,\mathbf{n})}
        \mathscr{C}(\phi^{-1}(1)) \otimes \cdots \otimes \mathscr{C}(\phi^{-1}(n)) \\
    \cong & \coprod_{\{\text{partition of }\bm=S \vee T \}} \Big( \coprod_{\phi \in
        \fin(|S|,\mathbf{n})}  \mathscr{C}(\phi^{-1}(1)) \otimes \cdots \otimes
        \mathscr{C}(\phi^{-1}(n)) \Big)\\
    \cong & \coprod_{\phi \in \fins(\mathbf{m},\mathbf{n})}  \mathscr{C}(\phi^{-1}(1)) \otimes \cdots \otimes
        \mathscr{C}(\phi^{-1}(n)) \\
    = &\widehat{\sC} (\mathbf{m},\mathbf{n}). \qedhere
  \end{align*}}
 \end{proof}
One can start with \autoref{eq:9} as the definition of $\widehat{\sC}$ and use
the same arguments as in \cite[Section 12.1]{MZZ} to show that $\widehat{\sC}$ has
the structure of a symmetric monoidal category.

\begin{rem}
Taking $\sB = \sC \boxtimes \comm$ as in \autoref{prop:base-operad}, there are maps of
$\sV$-enriched symmetric monoidal categories   $\overline{\sC} \to \overline{\sB}
\to \widehat{\sC} \to \widehat{\sB}$.
\end{rem}

\subsection{Abundance or scarcity of monads}
We review a general monad construction in \cite{Rant2} and show that applied to
the permutative envolop $\overline{\sC}$ and its subcategories $\Sigma$ and $\Lambda$,
the monad is isomorphic to the monad associated to the monoid $\sC$.
We identify the category of right $\sC$-modules completely.

\begin{defn}
    Let $\sD$ be a category enriched over $\sV$, and $\Xi$ be a subcategory of
    $\sD$. We say $\Xi$ is a discrete wide subcategory of $\sD$ if it is
    discrete, contains all objects of $\sD$, and is a subcategory of $\sD$ via
    morphisms $\coprod_{\Xi(X,Y)} \mathcal{I} \xrightarrow{} \sD(X,Y)$ for objects $X, Y \in \sD$.
  \end{defn}
  
 Let $\sD$ be a category enriched over $\sV$ and $\Xi$ be a discrete wide
 subcategory of $\sD$. Recall that for functors $X: \Xi \to \sV$ and $Y: \Xi^{op}
 \to \sV$, $Y \otimes_{\Xi} X$ is the coend \footnote{We write objects of $\Xi$ as $\bm$
   because of our applications.}
\begin{equation*}
\int^{\mathbf{m} \in \Xi} Y(\mathbf{m}) \otimes X(\mathbf{m}).
\end{equation*}

\begin{defn}(\cite[Construction 2.1]{Rant2}) \label{defn:bD}
   The associated monad $\bD_\Xi$ in $\Xi[\mathscr{V}]$ is given by
    \begin{equation*}
    \bD_\Xi X(\mathbf{n})= \mathscr{D}(-, \mathbf{n}) \otimes_\Xi X.
    \end{equation*}
\end{defn}
We may also define a monad on contravariant functors.
\begin{defn} \label{defn:bDop}
   The associated monad $\bD^{op}_\Xi$ in $\Xi^{op}[\mathscr{V}]$ is given by
    \begin{equation*}
        \bD^{op}_\Xi Y(\mathbf{n})=Y \otimes_\Xi \mathscr{D}(\mathbf{n}, -).
    \end{equation*}
\end{defn}
One can check that the definitions above indeed give monads. For example, the unit map $\eta : X \xrightarrow{} \bD_{\Xi} X  $ is given by 
\begin{equation*}
    X(\mathbf{n}) = \int^{\mathbf{m}} \Xi(\mathbf{m}, \mathbf{n}) \otimes X(\mathbf{m}) \xrightarrow{} \int^{\mathbf{m}} \sD (\mathbf{m}, \mathbf{n}) \otimes X(\mathbf{m}) = \bD_{\Xi}X(n)
\end{equation*}

We prove a categorical fact identifying the algebras, a special case of which is stated in \cite[Proposition 2.5]{Rant2} without proof.
\begin{prop} \label{prop:iso-alg}
Suppose that $\Xi$ is a discrete wide subcategory of a $\sV$-enriched category $\sD$.  There are isomorphisms of categories:
  \begin{align*}
    \mathrm{Fun}_{\sV}(\sD, \sV) &\cong   \mathrm{Alg}_{\bD_{\Xi}}[\Xi[\sV]]\\
    \mathrm{Fun}_{\sV}(\sD^{op}, \sV) &\cong  \mathrm{Alg}_{\bD^{op}_{\Xi}}[\Xi^{op}[\sV]]
  \end{align*}
\end{prop}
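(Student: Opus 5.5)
The plan is to prove the first isomorphism directly and obtain the second by the same argument applied to $\sD^{op}$ and $\Xi^{op}$. By definition, $\bD_\Xi X(\bn)=\int^{\bm\in\Xi}\sD(\bm,\bn)\otimes X(\bm)$. Because $\Xi$ is discrete, this coend is simply the coproduct $\coprod_{\bm}\sD(\bm,\bn)\otimes X(\bm)$, with no further identifications. Since $\Xi$ is a discrete wide subcategory, an object of $\Xi[\sV]$ is nothing more than a family of objects $X(\bm)\in\sV$ indexed by the objects of $\sD$. An algebra structure $\theta\colon\bD_\Xi X\to X$ therefore amounts to a family of morphisms
\[ \theta_{\bm,\bn}\colon \sD(\bm,\bn)\otimes X(\bm)\to X(\bn). \]
On the other side, a $\sV$-enriched functor $F\colon\sD\to\sV$ consists of the values $F(\bm)$ together with action maps $\sD(\bm,\bn)\otimes F(\bm)\to F(\bn)$. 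Equivalently, these can be given as maps $\sD(\bm,\bn)\to\underline{\sV}(F\bm,F\bn)$, using the closed structure assumed in this section. So the underlying data on both sides coincide.

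The key step is to match the axioms. The monad multiplication $\bD_\Xi\bD_\Xi\to\bD_\Xi$ comes from the composition $\sD(\bl,\bn)\otimes\sD(\bm,\bl)\to\sD(\bm,\bn)$. To see this, commute $\otimes$ past the coproducts (allowed because $\sV$ is closed):
\[ \bD_\Xi\bD_\Xi X(\bn)\cong\coprod_{\bl,\bm}\sD(\bl,\bn)\otimes\sD(\bm,\bl)\otimes X(\bm). \]
Under this identification, the associativity square $\theta\circ\bD_\Xi\theta=\theta\circ\mu$ splits summand by summand. On each summand it says exactly that the action is compatible with enriched composition, i.e. $F(g\circ f)=F(g)\circ F(f)$ in enriched form.

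The monad unit $X\to\bD_\Xi X$ is induced by the inclusions $\coprod_{\Xi(\bm,\bn)}\mI\to\sD(\bm,\bn)$. Since $\Xi$ is discrete, only the identity components $\mI\to\sD(\bn,\bn)$ occur. The unit axiom $\theta\circ\eta=\mathrm{id}$ then says that the action restricted along $\mI\to\sD(\bn,\bn)$ is the unit isomorphism, which is precisely preservation of identities.

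For morphisms, a map of algebras $X\to Y$ is a family $X(\bm)\to Y(\bm)$ commuting with $\theta$, again summandwise. This is exactly the naturality condition for an enriched natural transformation. Both constructions are the identity on underlying data, so they give mutually inverse functors, which yields an isomorphism rather than merely an equivalence. For the contravariant version, apply the covariant statement to $\sD^{op}$ with discrete wide subcategory $\Xi^{op}$, noting that $\bD^{op}_\Xi$ is exactly $(\mathbb{D}^{op})_{\Xi^{op}}$.

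I expect the only delicate point to be the bookkeeping in identifying $\bD_\Xi\bD_\Xi X$ with the double coproduct and checking that the monad multiplication really is the composition in $\sD$. This is where the definition of the monad from \cite[Construction 2.1]{Rant2} must be unwound carefully, including the symmetry isomorphisms needed to order the tensor factors.
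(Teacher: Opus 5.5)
Your argument depends on reading ``discrete'' as ``having only identity morphisms''. That reading is what lets you treat an object of $\Xi[\sV]$ as a bare family $X(\bm)$. It also lets you compute $\int^{\bm\in\Xi}\sD(\bm,\bn)\otimes X(\bm)$ as a coproduct ``with no further identifications'', and say that only the components $\mI\to\sD(\bn,\bn)$ enter the unit.

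In this paper, ``discrete'' refers to the hom-objects: $\Xi$ is an ordinary category regarded as a $\sV$-category via $\coprod_{\Xi(X,Y)}\mI$. The proposition is applied with $\Xi=\Sigma$ and $\Xi=\Lambda$ inside $\overline{\sC}$ and $\widehat{\sC}$ (see \autoref{lem:monad-same} and \autoref{thm:iso}, where the coend over $\Lambda$ is what produces $-\odot_\Lambda\sC$). For these $\Xi$ all three simplifications fail. As written, you have only treated the degenerate case in which $\Xi$ has no non-identity maps, which is not the case the paper needs.

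The direct route can be repaired, but the missing checks are substantive.
\begin{enumerate}
\item By the universal property of the coend, a morphism $\theta\colon\bD_\Xi X\to X$ in $\Xi[\sV]$ is a family $\theta_{\bm,\bn}\colon\sD(\bm,\bn)\otimes X(\bm)\to X(\bn)$ that is extranatural in $\bm$ and natural in $\bn$ with respect to maps of $\Xi$.
\item The unit axiom then says $\theta_{\bm,\bn}\circ(f\otimes\mathrm{id})=X(f)$ for \emph{every} $f\in\Xi(\bm,\bn)$, viewed as $f\colon\mI\to\sD(\bm,\bn)$. This, not mere preservation of identities, is what makes the $\sV$-functor built from $(X,\theta)$ restrict to the original $\Xi$-functor $X$. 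Only then are your two constructions mutually inverse.
\item Conversely, for a $\sV$-functor $F\colon\sD\to\sV$ you must verify that its action maps are balanced over $\Xi$ in $\bm$ and natural in $\bn$. This follows from compatibility with composition, since $\Xi\to\sD$ is a subcategory.
\item For associativity you need $\bD_\Xi\bD_\Xi X(\bn)\cong\int^{\mathbf{l}}\int^{\bm}\sD(\mathbf{l},\bn)\otimes\sD(\bm,\mathbf{l})\otimes X(\bm)$, which uses closedness. You also need that composition in $\sD$ descends along the coend in $\mathbf{l}$. The comparison is then between maps out of coends, not summand by summand.
\end{enumerate}
With these additions your argument becomes a valid, explicit alternative to the paper's.

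The paper avoids this bookkeeping. It identifies $\bD_\Xi$ with the monad of the free--forgetful adjunction for $\iota^*\colon\mathrm{Fun}_\sV(\sD,\sV)\to\Xi[\sV]$ and checks Beck's conditions. First, $\iota^*$ reflects isomorphisms because $\Xi$ is wide. Second, an $\iota^*$-split coequalizer $H$ acquires a $\sD$-action by transporting that of $G$ via the splitting, with $\iota^*K=H$ on the nose. That strictness is what yields an isomorphism of categories rather than an equivalence.
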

\begin{proof}
We prove the first case and the second case is obtained by taking $\sD$ to be
  $\sD^{op}$ and $\Xi$ to be $\Xi^{op}$ in the first case. We write $\iota: \Xi \to
  \sD$ for the inclusion of the subcategory. The monad $\bD_{\Xi}$ is indeed the
  monad of the free-forgetful adjunction of $\iota^{*}:  \mathrm{Fun}_{\sV}(\sD, \sV)
  \to     \mathrm{Fun}_{\sV}(\Xi, \sV) $, and we show that this is a monadic
  adjunction by Beck's monadicity theorem and then we are done. As $\Xi$ is a wide subcategory,
  $\iota^{*}$ reflects isomorphisms. Now we take functors $F,G \in \mathrm{Fun}_{\sV}(\sD,
  \sV)$, $H \in \mathrm{Fun}_{\sV}(\Xi, \sV)$ and a split coequalizer diagram
\begin{equation*}
  \begin{tikzcd}
    \iota^{*}F \ar[r, shift left, "\iota^{*}f"] \ar[r, shift right, "\iota^{*}g"'] & \iota^{*}G \ar[l] \ar[r, shift
    left,"e"]    & H. \ar[l, shift left, "s"]
  \end{tikzcd}
\end{equation*}
We define $K \in \mathrm{Fun}_{\sV}(\sD, \sV)$ by setting $K(\bm) =
H(\bm)$ on objects and $K = e \circ H$ on morphisms as in the commutative diagram
\[\begin{tikzcd}
	{\Xi (\mathbf{m},\mathbf{n})} \ar[d,"\iota"'] && {\underline{\mathrm{Hom}}(H(\mathbf{m}), H(\mathbf{n}))} \\
	{\mathscr{D} (\mathbf{m},\mathbf{n})} \ar[rr, "G"']&& {\underline{\mathrm{Hom}}(G(\mathbf{m}), G(\mathbf{n}))}
	\arrow["H", from=1-1, to=1-3]
	\arrow[from=1-1, to=2-1]
	\arrow["K", dashed, from=2-1, to=1-3]
	\arrow["e"', from=2-3, to=1-3]
\end{tikzcd}\]
Then $K$ is a coequalizer of $f,g$ and $\iota^{*}K = H$.
\end{proof}

\medskip

Let $\sC$ be a based operad in $\sV$. We apply \autoref{defn:bDop} to the permutative envelope $\overline{\sC}$
and the category of operators $\widehat{\sC}$ and get four associated monads
$\overline{\bC}^{op}_\Sigma$, $\overline{\bC}^{op}_\Lambda$, $\widehat{\bC}^{op}_\Sigma$, $\widehat{\bC}^{op}_\Lambda$.
\autoref{prop:iso-alg} identifies algebras over these monads in the
corresponding categories.

\begin{lem} \label{lem:monad-same}
  The monads  $\overline{\bC}^{op}_\Sigma$ and $\overline{\bC}^{op}_\Lambda$ are isomorphic
  to the monads $- \odot \sC$ associated to the monoid $\sC$ as right endofunctors
  on $\Sigma^{op}[\sV]$ and $\Lambda^{op}[\sV]$.

\end{lem}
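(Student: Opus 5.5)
We compare the two monads objectwise first and then check that the monad structures agree. By \autoref{defn:bDop}, for $Y \in \Xi^{op}[\sV]$ with $\Xi = \Sigma$ or $\Lambda$,
\[
\overline{\bC}^{op}_\Xi Y(\bn) = \int^{\bm \in \Xi} Y(\bm) \otimes \overline{\sC}(\bn,\bm).
\]
By \autoref{prop:C-hat-comm}, or by the definition of the permutative envelope, we have $\overline{\sC}(\bn,\bm) = \sC^{\boxtimes m}(\bn)$ naturally in both variables. The integrand is therefore $Y(\bm)\otimes \sC^{\boxtimes m}(\bn)$, and the coend is exactly $(Y \odot_\Xi \sC)(\bn)$ as in \autoref{KellyLA}. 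For $\Xi=\Sigma$ this uses the $\boxtimes_\Sigma$-power. Since $\boxtimes_\Sigma \cong \boxtimes_\Lambda$ by \cite[Theorem 3.4]{MZZ}, we may also read it as the $\Lambda$-power restricted to $\Sigma$.

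The key naturality point is which variance in $\bm$ is used. The coend over $\Xi$ uses the functoriality of $\bm \mapsto \overline{\sC}(\bn,\bm)$ along morphisms of $\Xi \subset \overline{\sC}$. For $\Lambda$ this means postcomposition with the image of $\lambda\colon \bm\to\bm'$ in $\overline{\sC}$. That image is built from the unit $\mI\to\sC(\mathbf 1)$ on hit points and the base map $\eta\colon\mI\to\sC(\mathbf 0)$ on unhit points. I will check that this agrees with the covariant $\Lambda$-functoriality of $\bm\mapsto \sC^{\boxtimes m}$ used in the Kelly coend, which inserts $\eta$ into the omitted tensor factor. It suffices to check this on permutations and on the generators $\si_i$, using the explicit description of composition in $\overline{\sC}$ from \cite[Section 12]{MZZ}. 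The functoriality in $\bn$ is precomposition in $\overline{\sC}$, which matches the $\Lambda^{op}$-structure on $\sC^{\boxtimes m}$ by \cite[Lemma 12.13]{MZZ}.

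Next we compare the monad structures. The unit of $\overline{\bC}^{op}_\Xi$ is induced by $\Xi(\bn,\bm)\otimes\mI \to \overline{\sC}(\bn,\bm)$. Under the identification above, this is $\Xi(\bn,-)\otimes \mI \cong (\sI_1)^{\boxtimes m}$, or $(\sI_1^\Sigma)^{\boxtimes m}$, mapped into $\sC^{\boxtimes m}$ by the unit of $\sC$. So it corresponds to $Y\cong Y\odot \sI_1 \to Y\odot\sC$. The multiplication uses the composition
\[
\overline{\sC}(\bn,\bk)\otimes_\Xi\overline{\sC}(\bk,\bm)\to\overline{\sC}(\bn,\bm).
\]
Under the identification it becomes $\sC^{\boxtimes m}\odot \sC \cong (\sC\odot\sC)^{\boxtimes m}\to \sC^{\boxtimes m}$, where the isomorphism is strong monoidality of $\odot$ in the first variable (\autoref{thm:MZZ}). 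This is how composition in $\overline{\sC}$ is defined in \cite[Section 12]{MZZ}. Then $(Y\odot\sC)\odot\sC\cong Y\odot(\sC\odot\sC)\to Y\odot\sC$ matches the monad multiplication by Fubini for coends.

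The main obstacle is the multiplication check. It requires unwinding the definition of composition in $\overline{\sC}$ from \cite{MZZ} and confirming that it is literally the map induced by $\mu$ through the strong monoidal isomorphism, including the associativity isomorphism of $\odot$. I would first reduce this to the case $Y = \Xi(-,\bk)\otimes\mI$, a representable, where both monads give $\overline{\sC}(-,\bk)$. Both constructions are cocontinuous in $Y$, being coends, with $\otimes$ commuting with colimits since $\sV$ is closed. Every $Y$ is a colimit of representables, so it suffices to compare the multiplications on representables. On representables, the multiplication $\overline{\sC}(-,\bk)\odot\sC \to \overline{\sC}(-,\bk)$ is the composition of $\overline{\sC}$ by the Yoneda lemma, and this gives the agreement.
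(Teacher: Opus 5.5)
Your proposal is correct and follows the same route as the paper. You identify $\overline{\sC}(\bn,\bm)=\sC^{\boxtimes m}(\bn)$ to get $\overline{\bC}^{op}_\Xi Y\cong Y\odot\sC$ objectwise, then match units and multiplications; the paper simply asserts this matching, while you justify it by checking the variance in $\bm$ and recognizing composition in $\overline{\sC}$ as the right $\sC$-module structure $\sC^{\boxtimes m}\odot\sC\cong(\sC\odot\sC)^{\boxtimes m}\to\sC^{\boxtimes m}$.

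One small correction to your density reduction: a general $Y$ is a colimit of objects $\Xi(-,\bk)\otimes A$ with $A\in\sV$, not just of $\Xi(-,\bk)\otimes\mI$. The reduction still goes through, because both monads have the form $Y\mapsto Y\otimes_\Xi(\text{kernel})$ with multiplications induced on the kernels, and in fact the direct comparison of kernels already suffices.
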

\begin{proof}
  We show for $\Lambda$ and the case for $\Sigma$ is similar. By definitions,
  we have
    \begin{equation*}
       (\overline{\bC}^{op}_\Lambda Y) (\mathbf{n})= \int^\mathbf{m} Y(\mathbf{m}) \otimes \overline{\sC}(\mathbf{n},\mathbf{m}) = \int^\mathbf{m} Y (\mathbf{m}) \otimes \sC^{\boxtimes m}(\mathbf{n}) = (Y\odot_\Lambda \sC) (\mathbf{n}).
    \end{equation*}
The structure maps $\mu : \overline{\bC}^{op}_\Lambda \overline{\bC}^{op}_\Lambda Y \xrightarrow{} \overline{\bC}^{op}_\Lambda Y$ and $\eta : \overline{C}^{op}_\Lambda Y \xrightarrow{} Y $ are exactly the right module structure maps $\mu' : Y \odot_\Lambda \sC \odot_\Lambda \sC \xrightarrow{} Y \odot_\Lambda \sC $ and $\eta' : Y \odot_\Lambda \sC \xrightarrow{} Y$.
\end{proof}

\begin{thm}\label{thm:iso}
  The following categories are isomorphic within each group.
\begin{enumerate}
\item \begin{enumerate}
    \item \label{item:iso1}$\mathrm{Fun}_{\sV}(\overline{\sC}^{op}, \sV)$.
    \item \label{item:iso2}$ \mathrm{Alg}_{\overline{\bC}^{op}_\Sigma}[\Sigma^{op}[\sV]]$
    \item \label{item:iso3} $\mathrm{Alg}_{\overline{\bC}^{op}_\Lambda}[\Lambda^{op}[\sV]]$
    \item \label{item:iso4}$\mathrm{RMod}_\sC [\Sigma^{op}[\sV]]$
    \item \label{item:iso5}$\mathrm{RMod}_\sC [\Lambda^{op}[\sV]]$
    \end{enumerate}
\item \begin{enumerate}
    \item \label{item:iso11}$\mathrm{Fun}_{\sV}(\widehat{\sC}^{op}, \sV)$.
    \item \label{item:iso22}$ \mathrm{Alg}_{\widehat{\bC}^{op}_\Sigma}[\Sigma^{op}[\sV]]$
    \item \label{item:iso33} $\mathrm{Alg}_{\widehat{\bC}^{op}_\Lambda}[\Lambda^{op}[\sV]]$
    \end{enumerate}
\end{enumerate}  
 In parituclar, this gives an alternative proof to \autoref{pullback}.
\end{thm}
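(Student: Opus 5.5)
The plan is to assemble the isomorphisms from \autoref{prop:iso-alg} and \autoref{lem:monad-same}. The key input is that $\Sigma$ and $\Lambda$ are discrete wide subcategories of $\overline{\sC}$ and of $\widehat{\sC}$. Both have objects $\bn$, and $\Lambda = \overline{\sI_1}$. The unit $\sI_1 \to \sC$ of the based operad $\sC$ induces the inclusion $\coprod_{\Lambda(\bm,\bn)} \mI \to \overline{\sC}(\bm,\bn)$ on the summands indexed by injections. Here each $\phi^{-1}(j)$ has at most one element, and $\sC(\mathbf{0})$ or $\sC(\mathbf{1})$ receives $\mI$ via $\eta$ or the unit. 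Composing with $\overline{\sC} \subset \widehat{\sC}$ gives the inclusion into $\widehat{\sC}$, and restricting to permutations handles $\Sigma$. With this, \autoref{prop:iso-alg} (second line, applied to $\sD = \overline{\sC}$ or $\widehat{\sC}$ and $\Xi = \Sigma$ or $\Lambda$) yields \autoref{item:iso1} $\cong$ \autoref{item:iso2} $\cong$ \autoref{item:iso3}, and likewise \autoref{item:iso11} $\cong$ \autoref{item:iso22} $\cong$ \autoref{item:iso33}.

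For the first group I would then invoke \autoref{lem:monad-same}. It identifies $\overline{\bC}^{op}_\Sigma$ and $\overline{\bC}^{op}_\Lambda$, as monads, with $-\odot_\Sigma \sC$ and $-\odot_\Lambda \sC$. Since algebras over the monad $-\odot \sC$ associated to a monoid are exactly right $\sC$-modules, this gives \autoref{item:iso2} $\cong$ \autoref{item:iso4} and \autoref{item:iso3} $\cong$ \autoref{item:iso5}. For $\Sigma$, note that $\overline{\sC}(\bn,\bm) = \sC^{\boxtimes m}(\bn)$ also computes the $\Sigma$-Day power, by \cite[Theorem 3.4]{MZZ}, so $\int^{\bm\in\Sigma} Y(\bm)\otimes \overline{\sC}(\bn,\bm) = (Y\odot_\Sigma \sC)(\bn)$.

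Finally, I would check that the composite isomorphism $\mathrm{RMod}_\sC[\Lambda^{op}[\sV]] \cong \mathrm{RMod}_\sC[\Sigma^{op}[\sV]]$ obtained by going through \autoref{item:iso1} is the forgetful map. Both sides send a functor $F$ on $\overline{\sC}^{op}$ to its restriction: to $\Lambda^{op}$, and then to $\Sigma^{op}$. The module structure in either case is the action $F(\bm)\otimes \overline{\sC}(\bn,\bm) \to F(\bn)$, and the $\Sigma$-action map is the $\Lambda$-one precomposed with the quotient $q$. So the triangle commutes on the nose, which recovers \autoref{pullback}.

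I expect the main obstacle to be the monad-level compatibility in \autoref{lem:monad-same}: checking that the multiplication of $\overline{\bC}^{op}_\Xi$, which comes from composition in $\overline{\sC}$, matches the multiplication of $\sC$ transported through $\odot$ and the associativity isomorphism. This is handled by the description of composition in $\overline{\sC}$ in \cite[Section 12]{MZZ}, which I would take as given since the lemma is already stated.
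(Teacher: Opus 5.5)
Your proposal is correct and follows the paper's own proof. The paper applies \autoref{prop:iso-alg}, with $\Sigma$ and $\Lambda$ as discrete wide subcategories of $\overline{\sC}$ and $\widehat{\sC}$, to get the isomorphisms among the first three items of each group, and then uses \autoref{lem:monad-same} to identify the monad algebras with right $\sC$-modules. Your additional checks are details the paper leaves implicit: the wide-subcategory hypothesis via the unit and base map of $\sC$, the $\Sigma$-Day powers via \cite[Theorem 3.4]{MZZ}, and the fact that the composite $\mathrm{RMod}_\sC[\Lambda^{op}[\sV]] \cong \mathrm{RMod}_\sC[\Sigma^{op}[\sV]]$ is the forgetful map (precomposition with $q$). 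This last check is what actually justifies the claim that the theorem gives an alternative proof of \autoref{pullback}.
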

\begin{proof}

\autoref{prop:iso-alg} shows the isomorphisms of
~\autoref{item:iso1}~\autoref{item:iso2}~\autoref{item:iso3} and of ~\autoref{item:iso11}~\autoref{item:iso22}~\autoref{item:iso33}.
\autoref{lem:monad-same} shows that $\autoref{item:iso3} \cong \autoref{item:iso5}$ and $\autoref{item:iso2}\cong \autoref{item:iso4}$.
\end{proof}

\begin{exmp} \label{ex-I1-comm} We have:
\begin{enumerate}
\item   The category of right modules over $\sI_1$ in $\Sigma^{op}[\sV]$ or in
   $\Lambda^{op}[\sV]$ is isomorphic to the category of contravariant functors from
   $\Lambda$ to $\sV$. 
\item The category of right modules over $\comm$ in
   $\Sigma^{op}[\sV]$ or in $\Lambda^{op}[\sV]$ is isomorphic to the category of
   contravariant functors from $\fin$ to $\sV$.
\end{enumerate}
 \end{exmp}

\section{Monoidal localization}
\label{sec:mono-local}
\subsection{Localization}
Let $\sV$ be a category and $S$ be a family of morphisms in
$\sV$. Gabriel and Zisman \cite{gabriel2012calculus}
  constructed a category $\sV [S^{-1}]$ called the localization of $\sV$ with respect
to $S$ and a faithful functor $L: \sV \xrightarrow{} \sV [S^{-1}]$ with the
following properties: 

\begin{enumerate}
    \item $L(s)$ is an isomorphism for any morphism $s$ in $S$.
    \item For any functor $F: \sV \xrightarrow{} \sW$ such that for any $s$ in
      $S$ the image $F(s)$ is an isomorphism, there exists a unique functor
      $\tilde{F} : \sV [S^{-1}] \xrightarrow{} \sW$ such that $F= \tilde{F} \circ
      L$.
\end{enumerate}
 
The localization $\sV [S^{-1}]$ can be characterized as follows. The objects of
$\sV [S^{-1}]$ are the same as those of $\sV$. For morphisms, we consider the
``necklaces'' in $\sV$, that is, diagrams in $\sV$ of the form

\begin{center}
    $\xleftarrow{} \cdots \xleftarrow{} \xrightarrow{} \cdots \xrightarrow{} \xleftarrow{} \cdots \xleftarrow{} \xrightarrow{} \cdots \xrightarrow{} $
\end{center}
where each $\xrightarrow{}$ represents a morphism in $\mathrm{Mor}(\sV)$ and each $\xleftarrow{}$ represents a morphism in $S$. For $A,B \in \mathrm{Ob}(\sV [S^{-1}]) = \mathrm{Ob}(\sV)$, we take $\mathrm{Map}_{\sV [S^{-1}]} (A,B)$ to be the set of all necklaces in $\sV$ that connect $A$ and $B$ quotient the following relationships.

\begin{enumerate}
    \item $\xrightarrow[]{f} \xrightarrow[]{g} \sim \xrightarrow[]{g\circ f}$.
    \item For $s: A \xrightarrow{} B$ in $S$, $\xrightarrow[]{s} \xleftarrow[]{s} \sim id_A$, $\xleftarrow[]{s} \xrightarrow[]{s} \sim id_B$.
\end{enumerate}
The composition is taken in the obvious manner. We take $L: \sV \xrightarrow{} \sV [S^{-1}]$ as the embedding. It can be checked that the above universal property is satisfied. 

\medskip
Our aim is to localize a symmetric monoidal category $\sV$ so that tensor products
commute with small or finite colimits. A straightforward attempt is
to simply take $S$ as the collection of all morphisms of the form $\{
\mathrm{colim}_{i \in I} (A_i \otimes X) \to \mathrm{colim}_{i \in I}(A_i) \otimes X\}$, where
$I $ runs over all small categories and $X$ runs over objects of $\sV$. To
ensure that $\sV [S^{-1}]$ have enough colimits, Gabriel and Zisman introduced
the following property of $S$.

\begin{defn}\cite[Definition 2.2.]{gabriel2012calculus} 
\label{defn:multiplicative}
  Let $\sV$ be a category and $S$ be a collection of morphisms. The collection
  $S$ is called a \emph{right multiplicative system} (or $S$ admits a calculus
  of left fractions) if $S$ satisfies the following properties.
\begin{enumerate}

\item \label{item:mult-1} The identity morphisms of $\sV$ are all in $S$.
\item \label{item:mult-2} If $f: A \xrightarrow{} B $, $ g: B\xrightarrow{} C$ are two morphisms in $S$, then $g \circ f : A \xrightarrow{} C$ is in $S$.
\item \label{item:mult-3} Suppose we are given two morphisms $f_1 : X \xrightarrow{} Y$ and $s: X \xrightarrow{} X'$ with $s$ belonging to $S$, then there are two morphisms $f_2 : X' \xrightarrow{} Y'$ and $t: Y \xrightarrow{} Y'$ where $t$ belongs to $S$ and the following diagram commutes

\begin{center}
\begin{tikzpicture}
   \node (a)at(0,0){$X'$};
   \node (b)at(2,0){$Y'$};
   \node (c)at(0,2){$X$};
   \node (d)at(2,2){$Y$};
   \draw[->][dashed](a)--(b)node[midway,below]{$f_2$};
   \draw[->](c)--(a)node[midway,left]{$s$};
   \draw[->](c)--(d)node[midway,above]{$f_1$};
   \draw[->][dashed](d)--(b)node[midway,right]{$t$};
\end{tikzpicture}
\end{center}

\item \label{item:mult-4} Suppose we are given two morphisms $g_1 , g_2 : X \rightrightarrows{}
  Y $ in $\sV$ and a morphism $s: W \xrightarrow{} X$ in $S$ such that $g_1 \circ s
  = g_2 \circ s$, then there is an object $Z$ and a morphism $t: Y \xrightarrow{} Z$ in $S$ such that $t \circ g_1 = t \circ g_2$.  

\end{enumerate}

We define \emph{left multiplicative systems} (or a collection of morphisms that
admits a calculus of right fractions) dually. We say $S$ is a \emph{multiplicative
system} if it is both a right and a left multiplicative system.
\end{defn}

\begin{defn}
  Fix a cardinal $\kappa$, a multiplicative system $S \subset \mathrm{Mor}(\sV)$ is called \emph{$\kappa$-closed}
if it is closed under $\kappa$-small products and coproducts of morphisms; it is called \emph{closed}
if it is closed under small products and coproducts of morphisms.
\end{defn}

\begin{prop} \label{prop:cosmos}
  Let $S$ be a multiplicative system in $\sV$.
\begin{itemize}
\item  The localization $L: \sV \to \sV[S^{-1}]$
  preserves finite limits and colimits that exist in $\sV$(\cite[Prop 3.1]{GZCalculus}).
\item  If $S$ is $\kappa$-closed, the localization $L: \sV \to \sV[S^{-1}]$
  preserves $\kappa$-small limits and colimits that exist in $\sV$ (\cite[Prop 3.5.1]{Krause}).
\end{itemize} 
\end{prop}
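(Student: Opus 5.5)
The statement to prove is \autoref{prop:cosmos}: localization at a multiplicative system preserves finite limits and colimits, and at a $\kappa$-closed system it preserves $\kappa$-small ones. My plan is to exploit the calculus of fractions to describe hom-sets of $\sV[S^{-1}]$ as filtered colimits, and then check the universal properties directly.

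\textbf{Step 1: hom-sets as filtered colimits.} Because $S$ is a right multiplicative system (\autoref{defn:multiplicative}), every necklace reduces to a single roof $A \xrightarrow{f} B' \xleftarrow{t} B$ with $t\in S$. This gives
\[\mathrm{Map}_{\sV[S^{-1}]}(A,B)\cong \mathrm{colim}_{(t\colon B\to B')\in S^B}\sV(A,B'),\]
where $S^B$ is the category of morphisms in $S$ out of $B$. Conditions \autoref{item:mult-3} and \autoref{item:mult-4} are exactly what make $S^B$ filtered. Dually, because $S$ is also a left multiplicative system, we get a filtered colimit over $S$-morphisms into $A$.

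\textbf{Step 2: finite colimits.} Let $C=\mathrm{colim}_{j} A_j$ be a finite colimit in $\sV$. For any $B$, the comparison map is
\[\mathrm{Map}(LC,LB)\to \lim_j \mathrm{Map}(LA_j,LB).\]
Using the left-fraction description, $\mathrm{Map}(LC,LB)$ is a filtered colimit of $\sV(C,B')=\lim_j\sV(A_j,B')$. Since the diagram is finite, this finite limit commutes with the filtered colimit, so the comparison map is a bijection. For finite limits I argue dually, with $L$ of a limit $\lim_j A_j$ and the description $\mathrm{Map}(LB,LA)=\mathrm{colim}_{s\colon B'\to B}\sV(B',A)$.

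\textbf{Step 3: $\kappa$-small (co)limits.} Here the filtered colimit no longer commutes with $\kappa$-small limits automatically, so the $\kappa$-closedness must supply this. Every $\kappa$-small colimit is a coequalizer of $\kappa$-small coproducts, and coequalizers are already handled by Step 2. It therefore suffices to treat coproducts $\coprod_{i\in I}A_i$ with $|I|<\kappa$. An element of $\prod_i\mathrm{Map}(LA_i,LB)$ is a family of roofs $A_i\to B_i'\xleftarrow{t_i}B$. I assemble these into one roof as follows.
\begin{enumerate}
\item Form $\coprod_i t_i\colon \coprod_i B\to\coprod_i B_i'$, which lies in $S$ because $S$ is $\kappa$-closed.
\item Push this out along the fold map $\coprod_i B\to B$ using \autoref{item:mult-3}. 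This yields a single $t\colon B\to B''$ in $S$, through which all the roofs factor.
\end{enumerate}
This proves surjectivity. Injectivity goes the same way: two roofs from $\coprod_i A_i$ that agree on each $A_i$ after some $s_i\in S$ agree after the combined morphism, by \autoref{item:mult-4} applied to the coproduct. Products are the dual argument, using the left calculus and closure under products.

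\textbf{Expected obstacle.} The main difficulty is the bookkeeping in Step 3. Combining $\kappa$ many witnesses into one morphism of $S$ requires closure under coproducts together with the Ore condition, which is why the paper cites \cite[Prop 3.5.1]{Krause}. Since both bullets are citations, one may alternatively just invoke \cite[Prop 3.1]{GZCalculus} and \cite{Krause}, with the above as the justification sketch.
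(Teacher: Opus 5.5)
Your argument is correct. The paper gives no proof of its own here: it only cites Gabriel--Zisman for the finite case and Krause for the $\kappa$-closed case. So your proposal is a self-contained reconstruction of the standard arguments behind those citations rather than a competing route.

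The right-multiplicative axioms give $\mathrm{Map}(LA,LB)\cong\mathrm{colim}_{(t\colon B\to B')\in S^B}\sV(A,B')$, and hence finite colimits by interchange. The left-multiplicative axioms give the dual formula, and hence finite limits. $\kappa$-closedness is what upgrades these index categories to $\kappa$-filtered ones. The gain over the bare citation is that it shows which hypothesis controls which half of the statement.

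Two pieces of bookkeeping should be written out.

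First, injectivity in Step 3 is more than ``the same way.'' After putting both roofs over a common $t\colon B\to B'$, you have witnesses $u_i\colon B'\to B_i''$ with $u_it\in S$ and $u_if\iota_i=u_if'\iota_i$. Use the fold-map construction from your surjectivity step to get $g_i\colon B_i''\to B'''$ with $g_iu_it=t''$ independent of $i$. Then apply condition (4) to the pair $(g_iu_i)_i$ and $g_{i_0}u_{i_0}\nabla$, both maps $\coprod_iB'\rightrightarrows B'''$. They agree after precomposition with $\coprod_it$, which lies in $S$ by $\kappa$-closedness. This yields a single $w\in S$ with $wg_iu_i$ independent of $i$, so $wg_{i_0}u_{i_0}$ equalizes $f$ and $f'$ on all of $\coprod_iA_i$.

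Second, you could phrase Step 3 as ``$S^B$ and its dual are $\kappa$-filtered'' and invoke the commutation of $\kappa$-small limits with $\kappa$-filtered colimits in sets. Then you no longer need the reduction to coproducts and coequalizers, which tacitly assumes the auxiliary coproducts $\coprod_jA_j$ exist. That assumption is harmless in the paper, where $\sV$ is cocomplete, but the statement is phrased for colimits that exist in $\sV$.
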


\begin{defn}
   A collection of morphisms $S \subset \mathrm{Mor}(\sV)$ is called \emph{monoidal} if it
 is closed under tensoring with identity maps $\mathrm{id}_X: X \to X$ for all
 $X \in \sV$ (\cite[page 1]{day1973note}).
 \end{defn}

\begin{prop}\label{smalllimits}
  Let $S$ be monoidal. 
  Then the localization $L: \sV \to \sV[S^{-1}]$
  is strong symmetric monoidal.
\end{prop}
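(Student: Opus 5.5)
The plan is to use the universal property of the Gabriel--Zisman localization to descend $\otimes$ to $\sV[S^{-1}]$, and then transport the symmetric monoidal coherence data along $L$.

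First, define the tensor on objects of $\sV[S^{-1}]$ by $X \otimes Y$, the same as in $\sV$. For morphisms it suffices to descend the bifunctor $L\circ\otimes : \sV \times \sV \to \sV[S^{-1}]$ to $\sV[S^{-1}] \times \sV[S^{-1}]$. I would use the standard fact that $\sV[S^{-1}]\times\sV[S^{-1}] \cong (\sV\times\sV)[(S\times \mathrm{id})\cup(\mathrm{id}\times S)]$. This holds for Gabriel--Zisman localizations, since a pair of necklaces can be padded with identities and split as $(f,g)=(f,\mathrm{id})\circ(\mathrm{id},g)$. By the universal property, it is then enough to check that $L(s\otimes \mathrm{id}_X)$ and $L(\mathrm{id}_X\otimes s)$ are isomorphisms for $s\in S$. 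This is exactly the monoidality hypothesis: $s\otimes\mathrm{id}_X\in S$. The symmetry $\mathrm{id}_X\otimes s \cong s\otimes \mathrm{id}_X$ via the braiding handles the other side, using that $S$ is closed under conjugation by isomorphisms, or simply that monoidal is read as tensoring on either side. We obtain a unique functor $\otimes_L$ with $\otimes_L\circ(L\times L)=L\circ\otimes$.

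Next, the unit, associator, unitors and braiding are taken to be the images under $L$ of those of $\sV$. Naturality in $\sV[S^{-1}]$ requires more than naturality with respect to morphisms of the form $L(f)$: it must also hold against the formal inverses $L(s)^{-1}$. This follows because if a square commutes with $L(s)$ in place of $L(s)^{-1}$ and $L(s)$ is invertible, then the square with the inverse also commutes. Every morphism of $\sV[S^{-1}]$ is a composite of $L(f)$'s and $L(s)^{-1}$'s, so naturality holds in general. The pentagon, triangle and hexagon identities hold because they are images under $L$ of identities in $\sV$. By construction $L$ is then strict, hence strong, symmetric monoidal, with identity structure maps.

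The main obstacle is the first step, specifically the identification of the localization of a product with the product of localizations, which is what lets the universal property apply to a two-variable functor. If I wanted to avoid that identification, I would instead descend one variable at a time. For fixed $X$, the functor $L(-\otimes X)$ inverts $S$, so it descends to $\sV[S^{-1}]$. The descended functors are natural in $X$ with respect to morphisms of $\sV$. I would then check that they also invert $S$ in the $X$-variable, which again uses monoidality, and conclude with the interchange law in each variable separately.
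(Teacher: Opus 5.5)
Your proposal is correct and follows essentially the same route as the paper: you identify $\sV[S^{-1}]\times\sV[S^{-1}]$ with a localization of $\sV\times\sV$, use the monoidality of $S$ to see that $L\circ\otimes$ inverts the relevant morphisms, and descend $\otimes$ by the universal property, so that $L(A)\otimes L(B)=L(A\otimes B)$ on the nose. Your version is in fact slightly more careful than the paper's: localizing at $(S\times\mathrm{id})\cup(\mathrm{id}\times S)$ rather than $S\times S$ does not tacitly require identities to lie in $S$, and you check naturality of the transported coherence isomorphisms against formal inverses, which the paper leaves implicit.
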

\begin{proof}

This is due to \cite[Corollary 1.4]{day1973note} and we present
  an alternative proof.
We note that the localization $L: \sV \xrightarrow{} \sV
[S^{-1}]$ induces a functor $L \times L : \sV \times \sV \xrightarrow{} \sV [S^{-1}] \times
\sV [S^{-1}]$. We take $S \times S \subset \mathrm{Mor} (\sV \times \sV)$
and formally we have $(\sV \times \sV) [(S \times S)^{-1}] \cong \sV [S^{-1}] \times \sV
  [S^{-1}]$. Since we have stipulated that $S$ is closed under finite
composition and tensor with identities, we have that for $(s_1, s_2) \in
  S \times S$, $L(s_1 \otimes s_2) =
  L(\mathrm{id} \otimes s_2)  \circ L(s_1 \otimes \mathrm{id})$ is invertible in $\sV
[S^{-1}]$. Therefore, by the universal property there exists a unique functor
$\tilde{\otimes} : \sV [S^{-1}] \times \sV[S^{-1}] \xrightarrow{} \sV[S^{-1}]$ such that
$\tilde{\otimes}\circ(L \times L) = L \circ \otimes$. This endows $\sV[S^{-1}]$ with an inherited symmetric monoidal structure.
\begin{equation*}
  \begin{tikzcd}
    & \sV \times \sV \ar[ld,"L \times L"'] \ar[d] \ar[r, "\otimes"]  & \sV \ar[d,"L"] \\
 \otimes:  \sV [S^{-1}] \times\sV [S^{-1}] \ar[r,equal] & (\sV \times \sV)[(S \times S)^{-1}]
 \ar[r,dotted,"\tilde{\otimes}"'] & \sV[S^{-1}] 
  \end{tikzcd} 
\end{equation*}
\end{proof}

We also denote the induced tensor product on $\sV[S^{-1}]$ by $\otimes$. From the discussions above, we have $LA \otimes LB = L(A\otimes B)$.

\subsection{Monoidal multiplicative closure}
\begin{con}\label{con:closure}

For $S \subset \mathrm{Mor}(\sV)$ containing all identity morphisms, 
we define a collection of morphisms $S_{\alpha}$ for each ordinal $\alpha$ by transfinite induction.

We set $S_0 = S$. For an ordinal $\alpha$, suppose that $S_{\beta}$ has been constructed for all
$ \beta < \alpha$.
When $\alpha$ is a limit cardinal, we define $S_{\alpha} = \cup_{\beta<\alpha}S_{\beta}$.
When $\alpha=\beta+1$, we define $S_{\alpha}$ as follows.

\begin{enumerate}
\item\label{item:construction1} Let $T_{\alpha,0} = S_{\beta}$. Let $T_{\alpha,1} $ be the collection of all morphisms of the form $s_1 : Y
  \xrightarrow{} Y \coprod_X Z$, where $s_0: X \to Z$ is in $T_{\alpha,0}$ and $f_1: X \to Y$
  is in $\sV$.
\begin{equation*}
  \begin{tikzcd}
    X \ar[r,"f_1"] \ar[d,"s_0"'] & Y \ar[d, dotted, "s_1"] \\
    Z \ar[r,dotted] & Y \coprod_XZ
  \end{tikzcd}
\end{equation*}

Note that $T_{\alpha,1}$ contains $T_{\alpha,0}$.

\item\label{item:construction2} $T_{\alpha,2}$ consists of morphisms in $T_{\alpha,1}$ and all canonical morphisms
  $$s_2 : \mathrm{coeq}(f \circ s_1 , g \circ s_1) \xrightarrow{} \mathrm{coeq} (f, g),$$
  where $f,g : X \rightrightarrows{} Y$ are two morphisms in $\sV$ and $s_1 : W
  \xrightarrow{} X$ is in $T_{\alpha,1}$.
  
\begin{equation*}
  \begin{tikzcd}
    W \ar[r,"s_1"] & X \ar[r, shift left, "f"] \ar[r, shift right, "g"'] &
    Y \ar[r] \ar[rd] & \mathrm{coeq}(f \circ s_1 , g \circ s_1) \ar[d, dotted, "s_2"]\\
    &&& \mathrm{coeq} (f, g)
  \end{tikzcd}
\end{equation*}

\item $T_{\alpha,3} $ is the collection of all morphisms of the form $s_3 : Y\prod_X Z
  \xrightarrow{} Y $, where $s_2: Z \to X$ is in $T_{\alpha,3}$ and $f_1: Y \to X$
  is in $\sV$.
  
\begin{equation*}
  \begin{tikzcd}
    Y\prod_XZ \ar[d,dotted,"s_3"'] \ar[r, dotted] & Z \ar[d,"s_2"]\\
    Y \ar[r,"f_1"'] & X
  \end{tikzcd}
\end{equation*}

Note $T_{\alpha,3}$ contains  $T_{\alpha,2}$ .
\item $T_{\alpha,4}$ consists of morphisms in $T_{\alpha,3}$ and all canonical morphisms
  $$s_4 : \mathrm{eq}(f,g) \xrightarrow{} \mathrm{eq}(s_3 \circ f , s_3 \circ g), $$ where
  $f,g : X \rightrightarrows{} Y$ are two morphisms in $\sV$ and $s_3 : Y \xrightarrow{} Z$ is in $T_{\alpha,3}$.
\begin{equation*}
  \begin{tikzcd}
    \mathrm{eq}(f,g) \ar[d, dotted, "s_4"'] \ar[rd] \\
     \mathrm{eq}(s_3 \circ f , s_3 \circ g) \ar[r] & X \ar[r, shift left, "f"] \ar[r,
     shift right, "g"'] & Y \ar[r, "s_3"] & Z
  \end{tikzcd}
\end{equation*}

\item $T_{\alpha,5}$ consists of all finite compositions of morphisms in $T_{\alpha,4}$.

\item $T_{\alpha,6}$ consists of all $\alpha$-small coproducts and products of morphisms
  in $T_{\alpha,5}$.
  
\item $T_{\alpha,7}$ consists of all morphisms 
  of the form $\mathrm{id}_X \otimes s_6 $ or $s_6\otimes \mathrm{id}_X$, where $s_6 $ is in
  $T_{\alpha,6}$ and $X$ is an object in $\sV$.
\end{enumerate}

We set $S_{\alpha} := T_{\alpha,7}$.

\end{con}

\begin{thm}
  \label{thm:closure}
For a regular cardinal $\kappa$, the $\overline{S}:= S_{\kappa}$ in
\autoref{con:closure} is a $\kappa$-closed monoidal multiplicative
system containing $S$.
It has the universal property that if $\overline{S}'$ is a $\kappa$-closed monoidal
 multiplicative system containing $S$, then the localization $L': \sV \to
 \sV[(\overline{S}')^{-1}]$ factors through $L: \sV \to \sV[\overline{S}^{-1}]$ uniquely.
\end{thm}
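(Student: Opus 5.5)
We prove the theorem in four steps: containment and monotonicity of the stages, stability of $\overline{S}=S_\kappa$ under each operation of \autoref{con:closure}, the resulting multiplicative-system axioms, and finally the universal property.

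\textbf{Containment and monotonicity.} First, $S\subset\overline{S}$ and the chain $S_\beta\subset S_\alpha$ for $\beta<\alpha$ is increasing. Within a successor step each $T_{\alpha,i}$ contains $T_{\alpha,i-1}$. For $T_{\alpha,1}$, $T_{\alpha,3}$ and $T_{\alpha,7}$ this holds by taking $f_1=\mathrm{id}$, resp.\ $X=\mI$ together with the unit isomorphism. For $T_{\alpha,5}$ and $T_{\alpha,6}$ it holds by taking one-term compositions and coproducts. Identities belong to $S$ by hypothesis, so they lie in every $S_\alpha$.

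\textbf{Closure of $\overline{S}$.} The key observation is that each operation in the construction uses fewer than $\kappa$ inputs. Given fewer than $\kappa$ morphisms of $\overline{S}=\bigcup_{\beta<\kappa}S_\beta$, regularity of $\kappa$ puts them all in a single $S_\beta$ with $\beta<\kappa$. Then any $\kappa$-small coproduct or product of them, hence any $\beta$-small one after enlarging $\beta$, lies in $T_{\beta+1,6}\subset S_{\beta+1}\subset\overline{S}$. One subtlety is that $T_{\alpha,6}$ only allows $\alpha$-small families. A family of cardinality $\lambda<\kappa$ is handled by passing to a stage $\alpha>\max(\beta,\lambda)$, which is still $<\kappa$.

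In the same way $\overline{S}$ is closed under composition (using $T_{\alpha,5}$), under tensoring with identities (using $T_{\alpha,7}$), and under the following four operations:
\begin{itemize}
\item pushouts, via $T_{\alpha,1}$;
\item the coequalizer comparison maps, via $T_{\alpha,2}$;
\item pullbacks, via $T_{\alpha,3}$;
\item the equalizer comparison maps, via $T_{\alpha,4}$.
\end{itemize}
In particular $\overline{S}$ is $\kappa$-closed and monoidal.

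\textbf{Multiplicative-system axioms.} Next we verify the axioms of \autoref{defn:multiplicative}. Axioms \autoref{item:mult-1} and \autoref{item:mult-2} are immediate. For \autoref{item:mult-3}, given $f_1\colon X\to Y$ and $s\colon X\to X'$ in $\overline{S}$, take $Y'=Y\coprod_X X'$ and let $t$ be the pushout leg. Then $t\in\overline{S}$ by pushout closure. For \autoref{item:mult-4}, given $g_1s=g_2s$, set $t\colon Y\to\mathrm{coeq}(g_1,g_2)$. The comparison map $s_2\colon \mathrm{coeq}(g_1s,g_2s)=Y\to\mathrm{coeq}(g_1,g_2)$ is exactly $t$, and since $g_1s=g_2s$ we have $\mathrm{coeq}(g_1s,g_2s)=Y$, so $t\in\overline{S}$. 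The left multiplicative axioms follow dually from pullbacks and the equalizer maps: if $sg_1=sg_2$, then $\mathrm{eq}(g_1,g_2)\to\mathrm{eq}(sg_1,sg_2)=X$ lies in $\overline{S}$.

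I expect the main obstacle to be the bookkeeping in the closure step rather than any of these axioms. One must check that each operation uses fewer than $\kappa$ inputs from $\overline{S}$, and that the ordinal-indexed smallness bound in $T_{\alpha,6}$ eventually covers every $\kappa$-small family.

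\textbf{Universal property.} Let $\overline{S}'$ be a $\kappa$-closed monoidal multiplicative system containing $S$, with localization $L'$. We show $L'$ inverts all of $\overline{S}$, and then the Gabriel--Zisman universal property gives the unique factorization. Because $\overline{S}'$ itself may fail to contain $\overline{S}$ literally, we instead show by transfinite induction that $L'$ inverts every $S_\alpha$, using only the exactness properties of $L'$ rather than closure of $\overline{S}'$.
\begin{itemize}
\item By \autoref{prop:cosmos}, $L'$ preserves finite limits, finite colimits and $\kappa$-small (co)products. A pushout or pullback of an inverted map is therefore sent to a pushout or pullback of an isomorphism, hence an isomorphism.
\item The coequalizer comparison maps are sent to comparison maps between coequalizers of parallel pairs that become isomorphic after inversion, so they become isomorphisms. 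The equalizer maps are handled dually.
\item Compositions, and $\kappa$-small coproducts and products, of isomorphisms are isomorphisms.
\item By \autoref{smalllimits}, $L'$ is strong monoidal, so $L'(\mathrm{id}\otimes s)\cong \mathrm{id}\otimes L'(s)$ is invertible.
\end{itemize}
Limit stages follow by union. Hence $L'$ inverts $S_\kappa=\overline{S}$ and factors uniquely through $L$.
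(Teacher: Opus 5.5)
Your proposal is correct and follows essentially the same route as the paper. The pushout and coequalizer-comparison stages give the right multiplicative axioms, and the pullback and equalizer stages give them dually. Regularity of $\kappa$ gives $\kappa$-closure, and a transfinite induction using that $L'$ preserves $\kappa$-small limits, colimits and $\otimes$ gives the universal property.

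Your choice of a stage $\alpha>\max(\beta,\lambda)$, to meet the $\alpha$-smallness bound in $T_{\alpha,6}$, is more careful than the paper's step (2). One small correction: $T_{\alpha,6}\subset T_{\alpha,7}$ holds only up to the unit isomorphism $\mI\otimes A\cong A$, so $T_{\alpha,7}$ should be read as also containing $T_{\alpha,6}$. The paper glosses over this point in the same way.
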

We refer to $\overline{S}$ as the $\kappa$-closed monoidal multiplicative closure of $S$.
We also have $\cup_\kappa S_{\kappa}$ as the closed monoidal multiplicative closure of
$S$, and it shares a similar universal property.
\begin{proof}
  By
construction, we have $S = S_{0} \subset S_{1} \subset \dots \subset S_{\kappa} = \bigcup_{\beta < \kappa}
S_{\beta}=\overline{S}$. 

(1)  We first show that $\overline{S}$ is both a right and a left
multiplicative system, i.e. $\overline{S}$ satisfies
\autoref{defn:multiplicative} items~\autoref{item:mult-1} to ~\autoref{item:mult-4}
and the dual statements.  

It is obvious from the construction that $\overline{S}$ contains all identities and is
  closed under finite composition. For any morphism $f_1 : X \xrightarrow{} Y$
  in $\sV$ and $s: X \xrightarrow{} X'$ in $S_{\beta} \subset \overline{S}$, the induced
  map $s' : Y \xrightarrow{} Y\coprod_X X'$ is in $S_{ \beta +1} \subset S_{ \kappa}$. Suppose that we are
  given two morphisms $g_1 , g_2 : X \rightrightarrows{} Y $ in $\sV$ and a morphism $s: W
  \xrightarrow{} X$ in $S_{\beta} \subset \overline{S}$ such that $g_1 \circ s = g_2 \circ s$. We
  may take $Z= \mathrm{coeq}(f,g)$ and $t: Y \cong \mathrm{coeq} (f\circ s , g \circ s)
  \xrightarrow{} \mathrm{coeq} (f,g) = Z$. Then $t$ belongs to $S_{ \beta +1} \subset
  \overline{S}$. Dually, the corresponding assertions for left multiplicative
  systems also hold.

(2) We then check $\overline{S}$ is closed under $\kappa$-small product and coproduct.
 For any ordinal  $\alpha < \kappa$ and $s_1 , \dots s_\alpha \in \overline{S}$, for each $i
 \leq \alpha$ there exists an ordinal $f(i) < \kappa$ such that $s_i \in S_{f(i)}$. Take
 $\beta_0 = \mathrm{Sup} \{f(1), \cdots, f(\alpha) \}$. As $\kappa$ is a regular cardinal, we
 must have $\beta_0 < \kappa$. By construction we have $\coprod_i s_i \in S_{\beta_0 +1} \subset S_{\kappa}$ and $\prod_i s_i \in S_{\beta_0 +1} \subset S_{\kappa}$. 
 It is easy to see that $\overline{S}$ is closed under tensoring with identities.

 (3) It remains to show the universal property. Suppose we are given another
 $\kappa$-closed monoidal multiplicative system containing $S$, denoted by
 $\overline{S}'$.  We use induction to show that for any ordinal
 $\alpha \leq \kappa$, $L'$ takes morphisms in $S_\alpha$ to isomorphisms in $\sV [\overline
 {S}'^{-1}]$. The base case is true because $S_0 = S$. Suppose that $L'(S_{\beta})$
 are isomorphisms for all $\beta < \alpha$, and we would like to show it for
 $L'(S_{\alpha})$. If $\alpha$ is a limit ordinal, the claim is true by the definition of
 $S_{\alpha}$. If $\alpha = \beta + 1$ for some $\beta$, using the fact that $L'$ preserves
 $\kappa$-small limits, colimits and the tensor product (\autoref{prop:cosmos} and
 \autoref{smalllimits}), one can show that $L'$ sends each class in
 \autoref{con:closure} to isormorphisms. For example,
 $L'$ sends $T_{\alpha,1}$ to isomorphisms because it does for $T_{\alpha,0}$ and the
 pushout of an isomorphism is also an isomorphism.

 The assertion will then follow from the universal property of
 localization.
\end{proof}

\subsection{Closed monoidal localization} 
\begin{lem} \label{lem:cofinal}
  Let $J$ be a small diagram and $\mathcal{B}$ be a collection of morphisms in $J$. Let
  $J'=J[\mathcal{B}^{-1}]$ be the diagram obtained by adding an inverse to each $f \in \mathcal{B}$.
  Then the inclusion $\iota: J \to J'$ is cofinal \footnote{We use cofinal to mean
   final in \cite[IX.3]{mac1998categories}, as it concerns colimits.}.
\end{lem}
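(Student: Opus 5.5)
We prove cofinality with the criterion of \cite[IX.3]{mac1998categories}: the inclusion $\iota: J \to J'$ is cofinal if and only if, for every object $j' \in J'$, the comma category $j' \downarrow \iota$ is nonempty and connected. The objects of $J'$ are those of $J$, so nonemptiness is immediate: for $j' = j$ take the pair $(j, \mathrm{id}_j)$. The substance of the proof is connectedness.

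Fix $j$, and let $(k, u\colon j \to k)$ be an object of $j\downarrow\iota$, with $u$ a morphism of $J'$. Every morphism of $J' = J[\mathcal{B}^{-1}]$ is represented by a necklace
\[ j = a_0 \to a_1 \leftarrow \cdots a_n = k, \]
whose forward arrows lie in $J$ and whose backward arrows are formal inverses $b^{-1}$ of some $b \in \mathcal{B}$. We show by induction on the length of the necklace that $(k,u)$ lies in the same connected component as $(j,\mathrm{id}_j)$.

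Suppose $u = g\circ u'$, where $u'\colon j \to a$ is a shorter necklace and $g$ is its final letter. There are two cases.
\begin{itemize}
\item If $g\colon a \to k$ is a morphism of $J$, then $g$ itself is a morphism $(a,u') \to (k,u)$ in $j\downarrow\iota$.
\item If $g = b^{-1}$ with $b\colon k \to a$ in $\mathcal{B}$, then $b\circ u = b\circ b^{-1}\circ u' = u'$ in $J'$. Hence $b$ is a morphism $(k,u) \to (a,u')$ in $j\downarrow\iota$.
\end{itemize}
In either case $(k,u)$ is joined by a single morphism to $(a,u')$, which by induction is connected to $(j,\mathrm{id}_j)$. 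Therefore the comma category is connected.

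The only delicate point is that a morphism of $J'$ is an equivalence class of necklaces rather than a single necklace. This causes no difficulty: the argument only requires that some representative exists, and the displayed equalities hold in $J'$ because composition respects the relations. The relation $b\circ b^{-1} = \mathrm{id}$ is exactly what makes $b$ a morphism of the comma category in the second case.

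Applying the criterion, colimits over $J'$ of diagrams restricted along $\iota$ agree with the corresponding colimits over $J$, and so $\iota$ is cofinal.
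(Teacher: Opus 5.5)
Your proof is correct and is essentially the paper's argument. Both show that $j\downarrow\iota$ is nonempty using $(j,\mathrm{id}_j)$, and both connect an arbitrary $(k,u)$ to it by writing $u$ as a necklace: at each step the forward arrow $g\in J$, or the arrow $b\in\mathcal{B}$ itself, serves as a comma-category morphism between consecutive partial composites. The only difference is presentational: the paper lists the partial composites $G_i$ explicitly where you use induction on the necklace length.
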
 
\begin{proof}
  We need to show that the comma category $x \downarrow \iota$ is non-empty and connected
  for all objects $x \in J'$. Obviously, it contains the object $(x \in J,\, \mathrm{id}: x \to
  x \in J')$. Suppose we are given any object $(y \in J,\ F: x \to y \in J')$. Then
  $F$ may be written as a finite composite 
\begin{equation*}
  \begin{tikzcd}
    x = x_0 \ar[r,"F_1"] & x_1 \ar[r,"F_2"] & x_2 \ar[r, "F_3"] & \cdots \ar[r,"F_n"]& x_n=y,
  \end{tikzcd}
\end{equation*}
where each $F_i$ is either in $J$ or is the formal inverse of some morphism in
$\mathcal{B}$. Write $F_0 = \mathrm{id}_x$ and $G_i = F_i \circ F_{i-1}\circ \cdots \circ F_0$.
 We show that there is a zigzag of morphisms in
$x \downarrow \iota$ from $(x, G_0)$ to $(x, G_n)$ by providing morphisms between pairs
 $(x, G_{i-1})$ and $(x, G_{i})$.
\begin{itemize}
\item 
Case 1: $F_i = g$ for $g \in J$. The diagram
  $
  \begin{tikzcd}
    x \ar[d,"G_{i-1}"'] \ar[r, equal] & x \ar[d,"G_i"] \\
    x_{i-1} \ar[r,"F_i"'] & x_i
  \end{tikzcd}
  $
  is a morphism from $(x, G_{i-1})$ to $(x, G_{i})$.
\item Case 2: $F_i = f^{-1}$ for $f \in \mathcal{B}$. The diagram
  $
  \begin{tikzcd}
    x \ar[d,"G_{i-1}"'] \ar[r, equal] & x \ar[d,"G_i"]\\
    x_{i-1}  & x_i \ar[l,"f"]
  \end{tikzcd}
  $
  is a morphism from $(x, G_{i})$ to $(x, G_{i-1})$.
  \qedhere
\end{itemize}
\end{proof}

\begin{thm}\label{commutewithtensor}
 Let $S = \{ \mathrm{colim}_{i \in I} (A_i \otimes X) \to \mathrm{colim}_{i \in I}(A_i) \otimes
 X\}$ where $I$ runs over all $\kappa$-small categories and $X$ runs over
 all objects of $\sV$. Then
\begin{enumerate}
\item \label{item:tensor-1} The localization  $L: \sV \to
  L\sV:=\sV[\overline{S}^{-1}]$ preserves $\kappa$-small limits, colimits and the
  tensor product.
\item \label{item:tensor-2} For any $\kappa$-small diagram $(A_j)_{j \in J}$ in $\sV$ and object $X \in \sV$, the induced map
  $$\mathrm{colim}_{j \in J} (LA_j \otimes LX) \to \mathrm{colim}_{j \in J}(LA_j) \otimes
 LX$$ is an isomorphism in $L\sV$.
\item \label{item:tensor-3} For any $\kappa$-small diagram $(B_i)_{i \in I}$ in $L\sV$ and object $X \in L\sV$, the natural map
  $$\mathrm{colim}_{i \in I} (B_i \otimes X) \to (\mathrm{colim}_{i \in I}B_i) \otimes
  X$$ is an isomorphism in $L\sV$.
\end{enumerate}
\end{thm}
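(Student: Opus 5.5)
Part \autoref{item:tensor-1} follows from results already in hand. By \autoref{thm:closure}, $\overline{S}$ is a $\kappa$-closed monoidal multiplicative system. \autoref{prop:cosmos} then shows that $L$ preserves $\kappa$-small limits and colimits, and \autoref{smalllimits} shows that $L$ is strong symmetric monoidal with $LA\otimes LB = L(A\otimes B)$. For part \autoref{item:tensor-2}, apply $L$ to the morphism $s\colon \mathrm{colim}_J (A_j\otimes X)\to (\mathrm{colim}_J A_j)\otimes X$, which lies in $S\subset\overline{S}$. Its image $L(s)$ is an isomorphism. Since $L$ preserves $\kappa$-small colimits and $\otimes$, the map $L(s)$ identifies with the canonical comparison map $\mathrm{colim}_J(LA_j\otimes LX)\to \mathrm{colim}_J(LA_j)\otimes LX$. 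We need to check naturality, namely that the identifications $L(\mathrm{colim}) \cong \mathrm{colim} L$ and $L(-\otimes -)\cong L-\otimes L-$ are compatible with the structure maps. This is a diagram chase using the universal property of the colimit.

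Part \autoref{item:tensor-3} is where the real work lies, because a diagram in $L\sV$ need not be the image of a diagram in $\sV$. The objects of $L\sV$ are those of $\sV$, so we may take $X$ to be $LX$. A diagram $B\colon I\to L\sV$ assigns an object $B_i=LA_i$ to each $i$ and a morphism to each arrow of $I$. Each such morphism is represented by a necklace, that is, a zigzag of morphisms of $\sV$ in which the backward arrows lie in $\overline{S}$. The plan is to build a new small category $J$ by adjoining to $I$ the intermediate objects of chosen necklace representatives for all the morphisms $B(u)$ (or for generators of $I$), with arrows for each forward and backward step. This gives a diagram $A\colon J\to\sV$. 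Let $\mathcal{B}$ be the set of backward steps; $L$ sends each of them to an isomorphism. Then $LA$ extends to $J'=J[\mathcal{B}^{-1}]$. There is also a functor $I\to J'$ sending $u$ to the composite along its necklace, and $B$ factors as $LA$ restricted along this functor.

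The comparison would then run as follows. By \autoref{lem:cofinal}, $J\to J'$ is cofinal. Hence $\mathrm{colim}_{J'} LA\cong\mathrm{colim}_J LA$, and likewise after tensoring with $LX$. By part \autoref{item:tensor-2} applied to $J$, the map $\mathrm{colim}_J(LA_j\otimes LX)\to\mathrm{colim}_J(LA_j)\otimes LX$ is an isomorphism. Composing the two gives the isomorphism for $J'$. Finally we must pass from $J'$ back to $I$.

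The main obstacle is this last passage. It has two parts. First, $I\to J'$ need not be cofinal. Second, in $L\sV$ the image of $I$ in $J'$ must make the relevant diagrams commute, which requires imposing the necklace relations and the composition relations of $I$ on $J'$. To handle this, I would quotient $J'$ by the relations holding in $L\sV$; the resulting diagram still lands in $L\sV$. The cleaner route is to arrange that every object of $J'$ is isomorphic in $J'$ to an object of $I$. The intermediate necklace objects are connected to the endpoints only by zigzags, so this can fail, and I would instead take $J$ to be a category that contains $I$'s objects together with the necklace objects and whose colimit over $I$ agrees. Concretely, one can show $\mathrm{colim}_I B\cong\mathrm{colim}_{J'}LA$ by comparing cocones directly: a cocone on $B$ extends uniquely to a cocone on $LA|_{J'}$, because each intermediate object maps via the invertible backward steps and forward steps into some $B_i$. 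This uniqueness and extension argument, together with $\kappa$-smallness of $J$ (which requires $\kappa$ to be uncountable or the necklaces to be finite), is the step to be checked most carefully.
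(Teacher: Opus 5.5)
Your proof of (1) and (2), and the first half of your argument for (3), is the paper's argument: lift the arrows of $B$ to necklaces to get $A\colon J\to\sV$, invert the backward steps, use the cofinality lemma for $J\to J'=J[\mathcal{B}^{-1}]$, and apply (2) to $J$. You differ in how you pass back to $I$.

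The paper reverses the backward arrows to form a category $K$. It asserts without proof that $I$ is a cofinal subcategory of $K$. It then uses $J[\mathcal{B}_1^{-1}]\cong K[\mathcal{B}_2^{-1}]$ to apply the cofinality lemma a second time. You instead show directly that cocones on $B$ over $I$ correspond bijectively to cocones on $\overline{LA}$ over $J'$, and likewise after tensoring with $X$.

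Your version is the more robust one. For $I$ to be a subcategory of $K$, one has to impose the composition relations of $I$. These hold only after applying $L$, so they cannot be imposed on $J$, which is a diagram in $\sV$. The paper's identification of the two localizations and its cofinality claim therefore need exactly the bookkeeping your cocone comparison avoids. Your bijection also shows that $\mathrm{colim}_I B$ exists in $L\sV$. It works equally well between $I$ and $J$ directly, so the cofinality lemma becomes optional. The paper's formulation is shorter and more formal once $K$ is built correctly.

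Three things to fix in your write-up.

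\emph{The claimed functor.} The sentence asserting a functor $I\to J'$ through which $B$ factors is false when $J$ is free on the necklace steps. The necklace of $vu$ and the concatenation of the necklaces of $v$ and $u$ are different morphisms of $J'$. Delete it; the cocone argument does not use it.

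\emph{Making the bijection precise.} Give the intermediate vertices of different necklaces distinct names. Restriction sends cocones on $\overline{LA}$ to cocones on $B$ because $B(u)$ is the image of the necklace path of $u$. Extension is forced by $c_x=c_{i'}\circ\overline{LA}(x\to i')$. Compatibility at the left endpoint is then exactly $c_{i'}\circ B(u)=c_i$.

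\emph{The $\kappa$-smallness concern.} It is not about necklace length, since necklaces are always finite; it is about directed cycles. The free category on a $\kappa$-small graph is $\kappa$-small when $\kappa$ is uncountable, and the paper later fixes $\kappa>\omega$. For $\kappa=\omega$, replace the free category by a finite one. It has an object for each vertex and each edge $e$, and arrows $e\to\mathrm{src}(e)$ and $e\to\mathrm{tgt}(e)$. Send $e$ to $A_{\mathrm{src}(e)}$, the first arrow to the identity and the second to $A(e)$. This diagram has the same cocones.
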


\begin{proof}
    Item \autoref{item:tensor-1} follows from \autoref{prop:cosmos} and
    \autoref{smalllimits}. Item \autoref{item:tensor-2} follows from
    \autoref{item:tensor-1}.
    
    It remains to show item \autoref{item:tensor-3}. We use $X$ to denote its lift in $\sV$.
    We may use necklaces to
    lift the diagram $D_0 = (B_i)_{i \in I}$ in $L\sV$ to a $\kappa$-small diagram
    $D = (A_j)_{j\in J}$ in $\sV$, where $\mathrm{ob}(I) \subset \mathrm{ob}(J)$ and
    $A_i = B_i$ for $i \in I$.
\begin{figure}[h!]
      \centering
      \begin{tikzpicture}
        \node (a) at (0,0) {r};
        \node (b) at (3,0) {t};
        \node (c) at (1.5,0) {j};
        \draw[->] (c) --  node[above]{$s$} (a);
        \draw[->] (c) --  node[above]{$f$} (b);
        \node at (1.5,-0.5) {J};
        \node (r) at (0,-1.5) {$B_r$};
        \node (t) at (3,-1.5) {$B_t$};
        \node (j) at (1.5,-1.5) {$A_j$};
        \draw[->] (j) -- node[above]{$D(s)$}  (r);
        \draw[->] (j) -- node[above]{$D(f)$} (t);
        \node at (1.5,-2) {$D = (A_j)$ in $\sV$};
        \node at (7,0) {};

      \end{tikzpicture}

      \begin{tikzpicture}

        \node (a) at (0,0) {r};
        \node (b) at (3,0) {t};
        \draw[->] (a) -- (b);
        \node at (1.5,-0.5) {I};
        \node (r) at (0,-1.5) {$B_r$};
        \node (t) at (3,-1.5) {$B_t$};
        \draw[->] (r) -- (t);
        \node at (1.5,-2) {$D_0=(B_i)$ in $L\sV$};        
      \end{tikzpicture}
      \begin{tikzpicture}
        \node (a) at (0,0) {r};
        \node (b) at (3,0) {t};
        \node (c) at (1.5,0) {j};
        \draw[->] (a) --  node[above]{$s$} (c);
        \draw[->] (c) -- node[above]{$f$}  (b);
        \node at (1.5,-0.5) {K};
        \node (r) at (0,-1.5) {$B_r$};
        \node (t) at (3,-1.5) {$B_t$};
        \node (j) at (1.5,-1.5) {$A_j$};
        \draw[->] (r) -- node[above]{$D'(s)$} node[below]{$\sim$} (j);
        \draw[->] (j) -- node[above]{$LD(f)$} (t);
        \node at (1.5,-2) {$D'$ in $L\sV$};  
      \end{tikzpicture}
    \end{figure}
    
\noindent By construction, if we reversed all
    backward arrows in the necklaces, we obtained another $\kappa$-small category $K$
    with the same objects as $J$.
    As the backward arrows in $J$ are sent to
    isomorphisms in $L\sV$, we can construct a diagram $D' : K \to L\sV$ such
    that $D' = LD$ on objects and forward arrows in the necklaces, $D'(s) = D(s)^{-1}$
    if $s$ is a backward arrow in the necklaces $J$, and extending to other morphisms by
    composition.
    Then $I$ is a subcategory of $K$ and $(B_i)_{i \in I} = (D')|_I$.
    It may be shown that $I \subset K$ is a cofinal subcategory, and so
\begin{align}
\label{eq:4}  \mathrm{colim}_K(D') & \cong \mathrm{colim}_I(B_i) \\
\label{eq:5}  \mathrm{colim}_K(D' \otimes X) & \cong \mathrm{colim}_I(B_i \otimes X)
\end{align}

Denote the class of all the backward arrows in $J$ by $\mathcal{B}_1$ and the corresponding
reversed arrows in $K$ by $\mathcal{B}_2$. Then $J[ \mathcal{B}_1^{-1}] \cong K[ \mathcal{B}_2^{-1}]$, and the
dotted arrows exist and are equal by universal properties:
\begin{equation*}
  \begin{tikzcd}
    J \ar[r] \ar[rrd,"{LA_j}"'] & J[ \mathcal{B}_1^{-1}]  \ar[rd, dotted, "\overline{{LA_j}}"] \ar[rr, equal] &
    & K[ \mathcal{B}_2^{-1}] \ar[ld, dotted, "\overline{D'}"']
    & K \ar[l] \ar[lld,"{D'}"] \\
    & & L \sV
  \end{tikzcd}
\end{equation*}
There is 
\begin{equation}
  \label{eq:1}
\mathrm{colim}_J(LA_j) \cong
\mathrm{colim}(\overline{{LA_j}}) \cong \mathrm{colim}(\overline{D'})  \cong \mathrm{colim}_K(D'),
\end{equation}
where the first and last isomorphisms are by \autoref{lem:cofinal}.
Similarly, there is
\begin{equation}
  \label{eq:2}
\mathrm{colim}_J(LA_j \otimes X) \cong \mathrm{colim}_K(D' \otimes X).
\end{equation}
Note that in this case we used that if $s$ is a backward arrow in the necklaces,
then both $LD(s) \otimes \mathrm{id}_X$ and $D'(x) \otimes \mathrm{id}_X$ are isomorphisms
in $L\sV$, which is guaranteed by the monoidal property of $\overline{S}$.

    Combing \autoref{eq:1}, \autoref{eq:2} and item \autoref{item:tensor-2}, we have 
\begin{equation*}
\mathrm{colim}_K(D' \otimes X) \cong (\mathrm{colim}_KD') \otimes X.
\end{equation*}
With \autoref{eq:4} and \autoref{eq:5}, this finishes the proof.
\end{proof}

\section{Normal oplax monoidal structure}
\label{sec:normal-op-lax}
Let $(\sV , \otimes , \mathcal{I})$ be a symmetric monoidal category and $L$ be the monoidal
localization functor defined in \autoref{commutewithtensor}. In this section we
discuss the monoidal products of $\Lambda$-sequences when $\otimes$ in $\sV$ is not assumed
to commute with colimits.

\subsection{$n$-fold Day convolution}
For a finite set $S$ and an object $X$ in $\sV$, define $S \otimes X = \amalg_S X$ to be
the $S$-fold coproduct of the object $X$. There is
\begin{equation*}
S \otimes (T \otimes X) \cong (S \times T) \otimes X \text{ for finite sets } S,T \text{ and } X \in \sV.
\end{equation*}
\begin{prop}\label{prop:tensor-colim}
  The functor $- \otimes X: \mathrm{FinSet} \to \sV$ commutes with finite colimits; and
the functor $T \otimes -: \sV \to \sV$ commutes with colimits.
\end{prop}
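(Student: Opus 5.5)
The plan is to prove both claims by reducing to the universal property of coproducts. Throughout, $S\otimes X=\coprod_S X$, and the functoriality in $S$ is the obvious one: a map of finite sets $\phi\colon S\to S'$ induces $\coprod_S X\to\coprod_{S'}X$, which sends the summand indexed by $s$ to the summand indexed by $\phi(s)$ by the identity.

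\emph{First claim.} I will first check the claim on maps into a test object and then reduce a general finite colimit to the two basic cases. For any $Y\in\sV$ there is a natural bijection
\[
\sV(S\otimes X, Y)\cong \mathrm{Set}(S,\sV(X,Y)).
\]
This says that $-\otimes X$ is left adjoint to $\sV(X,-)\colon \sV\to\mathrm{Set}$, except that the right adjoint lands in all sets rather than finite sets. So I will not invoke adjointness directly. Instead, for a finite diagram $D\colon I\to\mathrm{FinSet}$ with colimit $C$ (computed in $\mathrm{Set}$, hence finite), I compute
\begin{align*}
\sV(\mathrm{colim}_I (D_i\otimes X),Y)&\cong \lim_I \mathrm{Set}(D_i,\sV(X,Y))\\
&\cong \mathrm{Set}(C,\sV(X,Y))\cong\sV(C\otimes X,Y).
\end{align*}
I then need to verify that this composite is induced by the canonical comparison map $\mathrm{colim}_I(D_i\otimes X)\to C\otimes X$; once that holds, Yoneda gives the isomorphism. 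Equivalently, one can use that finite colimits are generated by the initial object, binary coproducts and coequalizers, and check each case. The initial object and binary coproducts are immediate: $\emptyset\otimes X=\emptyset$ and $(S\sqcup T)\otimes X=(S\otimes X)\sqcup(T\otimes X)$. The coequalizer case is exactly the Hom computation above, carried out for a parallel pair.

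\emph{Second claim.} For a diagram $(X_i)$ in $\sV$, coproducts commute with colimits, since both are colimits and colimits commute with colimits:
\[
\coprod_T\mathrm{colim}_i X_i\cong\mathrm{colim}_i\coprod_T X_i.
\]
This needs no finiteness of the diagram, and it uses only the cocompleteness of $\sV$.

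The main point to verify is the naturality in the first claim. The bijection has to be compatible with the maps that $\phi\colon S\to S'$ induces on both sides, so that the comparison morphism really is the one whose Hom image is the composite above. I will check this directly on summands using the description of $\phi_*$ given at the start.
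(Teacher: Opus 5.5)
Your proposal is correct and is a spelled-out version of the paper's one-line argument (``it follows from the definition''): you unwind $S\otimes X=\coprod_S X$ through the universal property of coproducts and use that colimits commute with colimits. The naturality check you flag can be skipped, because the bijection $\sV(S\otimes X,Y)\cong\mathrm{Set}(S,\sV(X,Y))$ already exhibits $-\otimes X\colon\mathrm{Set}\to\sV$ as left adjoint to $\sV(X,-)$, hence colimit-preserving, and $\mathrm{FinSet}\hookrightarrow\mathrm{Set}$ preserves finite colimits.
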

\begin{proof}
 It follows from the definition.
\end{proof}
\begin{warn}
  For $X, Y \in \sV$, $S \otimes (X \otimes Y)$ can be identified with $(S \otimes X) \otimes Y$
only when $\otimes$ commutes with finite colimits in $\sV$. We shall be careful where
to put the parenthesis.
\end{warn}

\begin{defn}
    For $\sD_1 , \dots ,\sD_n \in \Lambda^{op}[\sV]$, the $n$-fold box product is defined to be 
    \begin{equation*}
        \sD_1 \boxtimes_\Lambda \cdots \boxtimes_\Lambda \sD_n = \int^{i_1,\dots ,i_n} \Lambda(-,\mathbf{i_1} +\dots+\mathbf{i_n} ) \otimes \bigg( \sD_1(\mathbf{i_1}) \otimes \cdots \otimes \sD_n(\mathbf{i_n}) \bigg)
    \end{equation*}
\end{defn}

\begin{rem}
 There are maps $\sD_1 \boxtimes_{\Lambda} \sD_2 \boxtimes_{\Lambda} \sD_3 \to (\sD_1 \boxtimes_{\Lambda} \sD_2) \boxtimes_{\Lambda} \sD_3$ and  $\sD_1 \boxtimes_{\Lambda} \sD_2 \boxtimes_{\Lambda} \sD_3 \to \sD_1 \boxtimes_{\Lambda} (\sD_2 \boxtimes_{\Lambda} \sD_3)$ by
 the universal property. These are only isomorphisms when $\otimes$ commutes with
 colimits. In fact, $(\Lambda^{op}[\sV], \boxtimes_{\Lambda}, \sI_0)$ forms a normal op-lax
 monoidal structure as defined later in \autoref{normaloplax}, but we shall omit
 the details.
\end{rem}

\begin{prop}
 For $\sD_1 , \dots ,\sD_n \in \Lambda^{op}[\sV]$, the natural map  
\begin{equation*}
\iota: \sD_1 \boxtimes_\SI \cdots \boxtimes_\SI \sD_n \to \sD_1 \boxtimes_\Lambda \cdots \boxtimes_\Lambda \sD_n
\end{equation*}
is an isomorphism of $\Sigma$-sequences.
\end{prop}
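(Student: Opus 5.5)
We prove the claim by computing both sides as colimits levelwise and identifying them, in the same way as the binary case \cite[Theorem 3.4]{MZZ}. The key point is that we must not use that $\otimes$ commutes with colimits. The only colimit operations allowed are those that act on the indexing finite sets $\Lambda(\bn,-)$ and $\Sigma(\bn,-)$, through the tensoring $S\otimes X=\amalg_S X$ of \autoref{prop:tensor-colim}. The inner object $\sD_1(\mathbf{i_1})\otimes\cdots\otimes\sD_n(\mathbf{i_n})$ is never split apart.

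\emph{Step 1.} Write $\Lambda(\bn,\mathbf{i_1}+\cdots+\mathbf{i_n})$ as a set, decomposed by images. A based injection $\lambda\colon \bn\to\mathbf{i_1}+\cdots+\mathbf{i_n}$ is the same as a subset $U_r\subset\{1,\dots,i_r\}$ for each $r$, with $\sum|U_r|=n$, together with a bijection $\{1,\dots,n\}\cong\coprod U_r$. Every such $\lambda$ factors uniquely as
\[
\bn\xrightarrow{\ \sigma\ }\mathbf{j_1}+\cdots+\mathbf{j_n}\xrightarrow{\ \lambda_1+\cdots+\lambda_n\ }\mathbf{i_1}+\cdots+\mathbf{i_n},
\]
where $j_r=|U_r|$, $\sigma$ is a permutation and each $\lambda_r$ is an \emph{ordered} injection. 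The factorization is unique up to the evident $\Sigma_{j_1}\times\cdots\times\Sigma_{j_n}$-ambiguity, which is removed by requiring each $\lambda_r$ to be order preserving.

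\emph{Step 2.} Compute the coend. Using Step 1, every element $(\lambda, x)$ of the $\Lambda$-coend is identified with $(\sigma,(\lambda_1^*\otimes\cdots\otimes\lambda_n^*)x)$, where $\lambda_r^*$ denotes the restriction $\sD_r(\lambda_r)$. This gives a retraction from the coend over $\Lambda^{\times n}$ back to the coend over $\Sigma^{\times n}$, namely
\[
\int^{\mathbf{j}\in\Sigma^{n}}\Sigma(\bn,\mathbf{j_1}+\cdots+\mathbf{j_n})\otimes\big(\sD_1(\mathbf{j_1})\otimes\cdots\otimes\sD_n(\mathbf{j_n})\big)\cong\coprod_{j_1+\cdots+j_n=n}\Sigma_n\otimes_{\Sigma_{j_1}\times\cdots\times\Sigma_{j_n}}\big(\sD_1(\mathbf{j_1})\otimes\cdots\otimes\sD_n(\mathbf{j_n})\big).
\]
We make this precise by a cofinality argument rather than elementwise. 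Consider the category of elements of $\Lambda(\bn,-)$ restricted along $\bigvee\colon(\Lambda^{op})^n\to\Lambda^{op}$; its objects are $(\mathbf i,\lambda)$. The subcategory of objects with $\lambda$ bijective, with morphisms tuples of permutations, is cofinal for the coend diagram. The reason is that each $(\mathbf i,\lambda)$ receives a canonical map from $(\mathbf j,\sigma)$ via $(\lambda_r)$, and the uniqueness in Step 1 makes the relevant comma categories connected. These are in fact ``initial object up to $\Sigma$'' arguments. The factorization is functorial in maps $\mathbf i'\to\mathbf i$ of $\Lambda^n$, because composites of ordered injections are ordered injections after reordering by permutations.

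\emph{Step 3.} Check the map. The comparison $\iota$ is precisely the map of colimits induced by the inclusion of the $\Sigma$-indexed diagram into the $\Lambda$-indexed one. By Step 2 this inclusion is cofinal, so $\iota$ is an isomorphism, and it is evidently $\Sigma_n$-equivariant in $\bn$. The argument never distributes $\otimes$ over $\amalg$, since the cofinality statement is about indexing categories only. This is why it holds for general $\sV$, unlike the associativity maps of the preceding remark.

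The main obstacle is Step 2: verifying connectedness of the comma categories, that is, that any two ways of reducing $(\mathbf i,\lambda)$ to a bijective index are related by permutations compatibly with the $\Lambda^{op}$-action on each $\sD_r$. I would handle this by proving it first for $n=2$, mirroring the proof in \cite{MZZ}, and then observing that the combinatorics of the factorization in Step 1 is identical for arbitrary $n$.
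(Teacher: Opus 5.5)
Your proposal is correct and takes essentially the paper's route. The paper also computes both sides as pointwise left Kan extensions and reduces the claim to cofinality of the induced functor of comma categories $\vee_\Sigma\downarrow\bn \to \vee_\Lambda\downarrow\bn$, citing the argument of \cite[Theorem 3.4]{MZZ}; this is your full subcategory of bijective $\lambda$ inside the category of elements, up to taking opposites. The connectedness you flag as the main obstacle follows directly from your Step 1: for fixed $(\mathbf i,\lambda)$, the category of factorizations through a bijective index is a groupoid in which any two objects are related by a unique tuple of permutations.
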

\begin{proof}
 As left Kan extension can be computed pointwise by colimits over the arrow category, it suffices to prove that in the diagram  
  \begin{equation*}
    \begin{tikzcd}
\SI^{op} \times \cdots \times \SI^{op} \ar[d, "\vee_{\SI}"'] \ar[r, "i \times \cdots \times i"] & \LA^{op} \times \cdots \times \LA^{op} \ar[d,
"\vee_{\LA}"] \\
\SI^{op} \ar[r, " i "'] & \LA^{op}
    \end{tikzcd}
\end{equation*}
the induced map of arrow categories $i:  \vee_{\Sigma}  \downarrow \bn \to \vee_{\Lambda}  \downarrow \bn$ is cofinal for every $\bn
\in \SI$.
The exact same proof of \cite[Theorem 3.4]{MZZ} works.
\end{proof}

\subsection{$2$-fold Kelly product}

Recall that when $(\sV , \otimes , \mathcal{I})$ is closed symmetric monoidal, the Kelly product is defined to be the coend
\begin{align}
\sD_1 \odot_{\Lambda} \sD_2 := {} & \int^\bk \sD_1(\mathbf{k}) \otimes \sD_2^{\boxtimes k} \notag\\ \cong {} & \int^{\mathbf{k,n_1,\dots ,
                                                            n_k}} \Lambda (-, \mathbf{n_1 + \cdots +n_k})\otimes \sD_1(\mathbf{k})\otimes \sD_2(\mathbf{n_1}) \otimes \cdots\otimes \sD_2(\mathbf{n_k})\notag
\end{align}

These equivalent definitions involve several isomorphisms commuting $\otimes$ with colimits.
Inspired by it, for $\sD_1, \sD_2 \in \Lambda^{op}[\sV]_{\sI_0}$, we set 
\begin{align*}
  \sD_1 \otimes \sD_2^{\otimes k} : \Lambda^{op} \times (\Lambda^{op})^k & \to \sV \\
  (\bm, \mathbf{n_1}, \cdots, \mathbf{n_k}) & \mapsto \sD_1(\bm) \otimes \sD_2(\mathbf{n_1}) \otimes \cdots
                                         \otimes \sD_2(\mathbf{n_k}).
\end{align*}
Let $(\sD_1 \sm \sD_2)_\bk: \Lambda^{op} \times \Lambda^{op} \to \sV$ be the left Kan extension as in:
\begin{equation*}
  \begin{tikzcd}
    \Lambda^{op} \times (\Lambda^{op})^k \ar[rr,"{\sD_1 \otimes \sD_2^{\otimes k}}"]
    \ar[d," \mathrm{id} \times \vee"'] & & \sV\\
     \Lambda^{op} \times \Lambda^{op} \ar[urr,"(\sD_1 \sm \sD_2)_\bk"']
  \end{tikzcd}
\end{equation*}
Explicitly,
\begin{equation*}
(\sD_1 \sm \sD_2)_\bk(\bm, -) = \int^{\mathbf{n_1}, \cdots, \mathbf{n_k}}\Lambda (-,\mathbf{n_1 + \cdots +n_k}) \otimes \big(\sD_1(\bm) \otimes \sD_2(\mathbf{n_1}) \otimes \cdots
                                         \otimes \sD_2(\mathbf{n_k})\big).
\end{equation*}
The functors $(\sD_1 \sm \sD_2)_\bk(-,-)$ are also functorial for $\bk$ in
$\Lambda$. To see this, the non-trivial check is for the morphisms of the form
$\sigma_{k+1}: \bk \to \mathbf{k+1}$. We examine the following diagrams of definition for
$(\sD_1 \sm \sD_2)_\bk$ and $(\sD_1 \sm \sD_2)_{\mathbf{k+1}}$.
\begin{equation*}
  \begin{tikzcd}
    \Lambda^{op} \times (\Lambda^{op})^k  \ar[rrrr,bend left, "{\sD_1 \otimes \sD_2^{\otimes k}}", ""'{name=A}]
    \ar[rr,"{\mathrm{id} \times \mathrm{id}^k \times \mathbf{0}}"]\ar[d,"\mathrm{id} \times \vee"']
    & & \Lambda^{op} \times (\Lambda^{op})^{k+1} \ar[rr,"{\sD_1 \otimes \sD_2^{\otimes k+1}}"]
    \ar[d,"\mathrm{id} \times \vee"'] \ar[from=A, Rightarrow, "\eta"]& &  \sV\\
     \Lambda^{op} \times \Lambda^{op} \ar[rr, equal] \ar[urrrr,bend right=45, "(\sD_1 \sm
     \sD_2)_{\mathbf{k}}"']
     & &  \Lambda^{op} \times \Lambda^{op} \ar[urr,near start, "(\sD_1 \sm \sD_2)_{\mathbf{k+1}}"']
  \end{tikzcd}
\end{equation*}
The natural transformation $\eta$ is induced by $\mI \to \sD_2(\mathbf{0})$, and
we have natural transformations
\begin{equation*}
(\sD_1 \sm \sD_2)_{\mathbf{k}} \to \mathrm{LKan}_{\mathrm{id} \times \vee} \sD_1 \otimes
\sD_2^{\otimes k} \otimes \sD_2(\mathbf{0}) \to (\sD_1 \sm \sD_2)_{\mathbf{k+1}},
\end{equation*}
where the first arrow is induced by $\eta$ and the second arrow is induced by $\mathrm{id} \times \mathrm{id}^k \times \mathbf{0}$.

\begin{defn}\label{2case}
  For $\sD_1, \sD_2 \in \Lambda^{op}[\sV]_{\sI_0}$, define the $2$-fold Kelly product to be
  \begin{align*} 
    \sD_1\odot_\Lambda \sD_2& = \int ^{\bk} (\sD_1 \sm \sD_2)_{\bk}(\bk,-) \\
    & = \int^{\bk,\mathbf{n_1},\dots , \mathbf{n_k}} \Lambda (-,\mathbf{n_1 + \cdots +n_k})\otimes
      \big(\sD_1(\mathbf{k})\otimes \sD_2(\mathbf{n_1}) \otimes \cdots\otimes \sD_2(\mathbf{n_k}) \big)
\end{align*}

\end{defn}

\begin{prop}\label{Lpreserve}
  Composing with $L: \sV \to L\sV$ gives functors
  $$L: \Lambda^{op}[\sV] \to \Lambda^{op}[L\sV] \text{ and } L: \Lambda^{op}[\sV]_{\sI_0} \to \Lambda^{op}[L\sV]_{\sI_0}$$
 that preserve $\boxtimes_{\Lambda}$ and $\odot_{\Lambda}$.
\end{prop}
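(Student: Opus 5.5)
The plan is to reduce everything to the facts that $L$ preserves $\kappa$-small colimits and the tensor product, from \autoref{commutewithtensor}\autoref{item:tensor-1}. Here $\kappa$ is taken large enough that all the diagrams below are $\kappa$-small. This is possible since $\Lambda$ is small and each coend over $\Lambda$ involves only set-many objects and morphisms; in practice I would take the closed localization $\cup_\kappa S_\kappa$, or fix $\kappa$ uncountable.

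First, postcomposition with $L$ is clearly functorial. It sends $\Lambda^{op}[\sV]$ to $\Lambda^{op}[L\sV]$. Since $L$ is strong monoidal (\autoref{smalllimits}), $L\mI\cong \mI$, so a base map $\eta\colon \mI\to \sD(\mathbf 0)$ goes to $L\eta\colon \mI\cong L\mI\to L\sD(\mathbf 0)$. For the same reason $L$ preserves $\emptyset$, being an empty colimit, and hence $L\sI_0\cong\sI_0$ and $L\sI_1\cong \sI_1$.

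Next comes the box product. I would write $\sD_1\boxtimes_\Lambda\cdots\boxtimes_\Lambda\sD_n(\bm)$ as the coend in the definition of the $n$-fold box product. This coend is a small colimit of objects of the form $\Lambda(\bm,\mathbf{i_1}+\cdots+\mathbf{i_n})\otimes(\sD_1(\mathbf{i_1})\otimes\cdots\otimes\sD_n(\mathbf{i_n}))$, where the finite-set copower is a finite coproduct. Applying $L$ and using, in order:
\begin{itemize}
\item $L$ commutes with the coend, a small colimit;
\item $L$ commutes with the finite coproduct $S\otimes-$;
\item $L(X\otimes Y)\cong LX\otimes LY$, applied iteratively with the parenthesization kept fixed,
\end{itemize}
we get a natural isomorphism $L(\sD_1\boxtimes\cdots\boxtimes\sD_n)\cong L\sD_1\boxtimes\cdots\boxtimes L\sD_n$. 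I would then check compatibility with the $\Lambda$-functoriality in $\bm$ and with base maps. This is formal, since the isomorphisms are natural and all structure maps are induced by maps between the indexing terms.

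For the Kelly product I would use \autoref{2case}, which expresses $\sD_1\odot_\Lambda\sD_2$ as the single iterated coend $\int^{\bk,\mathbf{n_1},\dots,\mathbf{n_k}}\Lambda(-,\mathbf{n_1}+\cdots+\mathbf{n_k})\otimes(\sD_1(\bk)\otimes\sD_2(\mathbf{n_1})\otimes\cdots)$. Here the outer coend over $\bk$ uses the $\Lambda$-functoriality in $\bk$ of $(\sD_1\sm\sD_2)_\bk$ built from $\eta$. The same argument applies. $L$ commutes with the left Kan extension defining $(\sD_1\sm\sD_2)_\bk$, since it is a pointwise colimit, and with the outer coend. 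It also carries the transition maps $(\sD_1\sm\sD_2)_\bk\to(\sD_1\sm\sD_2)_{\mathbf{k+1}}$, induced by $\eta$ and by $\mathbf 0$-insertion, to the corresponding maps for $L\sD_1,L\sD_2$, because these are built from $\eta$, tensoring, and maps of colimits. Hence $L(\sD_1\odot_\Lambda\sD_2)\cong L\sD_1\odot_\Lambda L\sD_2$, compatibly with base maps, since the base map of the Kelly product is defined from $\eta$ and $\sD\odot\sI_0\cong\sD$.

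The main obstacle I expect is the bookkeeping in the Kelly-product step. There one must verify that the isomorphisms for $(\sD_1\sm\sD_2)_\bk$ are natural in $\bk\in\Lambda$, especially for the maps $\sigma_{k+1}$ that use the base point. Only then does the outer coend over $\bk$ transport correctly. I would handle this by treating the whole expression as one colimit over a single small indexing category: a Grothendieck-type category of tuples $(\bk,\mathbf{n_1},\dots,\mathbf{n_k})$ with morphisms generated by $\Lambda$-maps together with the $\eta$-insertions. Then $L$ commutes with that one colimit termwise, with the termwise isomorphism $L(A\otimes B_1\otimes\cdots)\cong LA\otimes LB_1\otimes\cdots$ natural in the indexing data.
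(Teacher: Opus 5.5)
Your proposal is correct and takes the same route as the paper, whose entire proof is that $L$ preserves ($\kappa$-small) colimits and the tensor product; you make explicit the bookkeeping the paper leaves implicit, namely the choice of uncountable $\kappa$, the naturality in $\bk$ of the $\eta$-induced transition maps, and the base points. One small correction: $\sD\odot_\Lambda\sI_0\cong\sD$ is false in general (already for closed $\sV$ one has $\sD\odot_\Lambda\sI_0\cong\sD(\mathbf 0)\otimes\sI_0$, e.g.\ $\sI_1\odot_\Lambda\sI_0\cong\sI_0$), so justify compatibility with base maps instead by noting that the base map of $\sD_1\odot_\Lambda\sD_2$ is $\eta\colon\mI\to\sD_1(\mathbf 0)$ followed by the inclusion of the $\bk=\mathbf 0$ term of the coend, which $L$ visibly preserves.
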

\begin{proof}
  This is because $L$ preserves colimits and tensor products in $\sV$.
\end{proof}

We denote $\sI_1 = \Lambda(-,\mathbf{1}) \otimes \mathcal{I}$. It is a unital $\Lambda$-sequence.
\begin{exmp}\label{2unit}
     There are two canonical morphisms $\alpha_{1,0,0} : \sD \xrightarrow{} \sI_1
     \odot_\Lambda \sD$ and  $\alpha_{1,1,0} : \sD \xrightarrow{} \sD \odot_\Lambda \sI_1$, which
     we define now.
By the Yoneda Lemma, there is 
\begin{align*}
  \sD = & \int^{\mathbf{n_1}} \Lambda(-, \mathbf{n_1}) \otimes \sD(\mathbf{n_1}) \\
   = &  \int^{\bk,\mathbf{n_1} , \cdots , \mathbf{n_k}} (\Lambda(-,\mathbf{n_1 + \cdots +n_k}) \times \Lambda(\mathbf{k}, \mathbf{1}) )\otimes \sD(\mathbf{n_1}) \otimes \cdots \otimes \sD(\mathbf{n_k})
\end{align*}
By commuting tensor products with coproducts as in
$$\Lambda(\mathbf{k}, \mathbf{1}) \otimes (\mI \otimes X) \to (\Lambda(\mathbf{k}, \mathbf{1}) \otimes \mI)
\otimes X,$$
the map $\alpha_{1,0,0}$ maps $\sD$ to
\begin{align*}
   \sI_1 \odot_\Lambda \sD= {} & \int^{\bk,\mathbf{n_1} , \cdots , \mathbf{n_k}} \Lambda(-,\mathbf{n_1 + \cdots +n_k}) \otimes \bigg( (\Lambda(\mathbf{k}, \mathbf{1}) \otimes \mathcal{I})\otimes \sD(\mathbf{n_1}) \otimes \cdots \otimes \sD(\mathbf{n_k})     \bigg).
\end{align*}
And similarly for $\alpha_{1,1,0}$.
\end{exmp}

\begin{rem}
  When $\otimes$ commutes with colimits in $\sV$, both $\alpha_{1,0,0}$ and $\alpha_{1,1,0}$
  are isomorphisms. In general, $\sI_1$ is not necessarily isomorphic to $\sI_1
  \odot_{\Lambda} \sI_1$.
\end{rem}

\subsection{3-fold Kelly product}
Now we consider the 3-fold product. Since we do not have associativity any more,
we have to make separate definitions. 

Let 
\begin{equation*}
 \sD_1 \otimes \sD_2^{\otimes k} \otimes \sD_3^{\otimes t} : \Lambda^{op} \times (\Lambda^{op})^k \times (\Lambda^{op})^t \to
 \sV
\end{equation*}
\begin{equation*}
(\bm,  \mathbf{n_1}, \cdots, \mathbf{n_k}, \mathbf{j_1}, \cdots, \mathbf{j_t})
 \mapsto \sD_1(\bm) \otimes \sD_2(\mathbf{n_1}) \otimes \cdots \otimes \sD_2(\mathbf{n_k}) \otimes
  \sD_3(\mathbf{j_1}) \otimes \cdots \otimes \sD_3(\mathbf{j_t}),
\end{equation*}
and $(\sD_1 \sm \sD_2 \sm \sD_3)_{\bk,\bt}: \Lambda^{op} \times \Lambda^{op} \times \Lambda^{op} \to \sV$ be the left Kan extension as in:
\begin{equation*}
  \begin{tikzcd}
    \Lambda^{op} \times (\Lambda^{op})^k  \times (\Lambda^{op})^t  \ar[rrr,"{\sD_1 \otimes \sD_2^{\otimes k}  \otimes \sD_3^{\otimes t}}"]
    \ar[d," \mathrm{id} \times \vee \times \vee"'] & & & \sV\\
     \Lambda^{op} \times \Lambda^{op} \times \Lambda^{op} \ar[urrr,"(\sD_1 \sm \sD_2 \sm \sD_3)_{\bk,\bt}"']
  \end{tikzcd}
\end{equation*}

Similarly as before, the functors $(\sD_1 \sm \sD_2 \sm \sD_3)_{\bk,\bt}$ are
also functorial for $\bk, \bt$ in $\Lambda$. 

\begin{defn}\label{3compose}
  For $\sD_1, \sD_2, \sD_3 \in \Lambda^{op}[\sV]_{\sI_0}$, define the 3-fold Kelly
  product to be  
\begin{align*}
  \mu_3(\sD_1, \sD_2, \sD_3) = {} & \int^{\bk,\bt} (\sD_1 \sm \sD_2 \sm
                       \sD_3)_{\bk,\bt}(\bk,\bt,-)\\
  = & \int^{\substack{\bk, \bt \\
  \mathbf{n_1},\cdots,\mathbf{n_\bk}\\\mathbf{j_1},\cdots,\mathbf{j_\bt}}}
  \big(\Lambda(-,\mathbf{j_1+\cdots + j_t}) \times \Lambda(\mathbf{t},\mathbf{n_1+\cdots+n_{k}}) \big)\\
  & \otimes \bigg( \sD_1(\mathbf{k})\otimes \sD_2(\mathbf{n_1}) \otimes\cdots \otimes \sD_2(\mathbf{n_k})\otimes
    \sD_3(\mathbf{j_1}) \otimes \cdots \otimes \sD_3(\mathbf{j_t}) \bigg)
\end{align*}
\end{defn}

\begin{lem} We have the following isomorphisms:      
\begin{itemize}
\item For any object $X \in \sV$ which may be dependent
on
$$\bk,  \mathbf{n_1},\mathbf{j_{1,1}}, \cdots, \mathbf{j_{1,n_1}},
  \mathbf{n_k}, \mathbf{j_{k,1}}, \cdots, \mathbf{j_{k,n_k}},$$ there is
\begin{equation}
\label{eq:6}
\begin{split}
 &  \int^{\substack{\mathbf{r_1} , \cdots , \mathbf{r_k}}} \big( \Lambda(-,\mathbf{r_1+\cdots
  +r_k}) \\
& \phantom{==== }\times  \Lambda(\mathbf{r_1},\mathbf{j_{1,1}+\cdots
      +j_{1,n_1}}) \times \cdots \times \Lambda(\mathbf{r_k},\mathbf{j_{k,1}+\cdots
  +j_{k,n_k}})\big)  \otimes X\\
& \cong \Lambda(-, \mathbf{j_{1,1}+\cdots
      +j_{1,n_1}} + \cdots + \mathbf{j_{k,1}+\cdots  +j_{k,n_k}}) \otimes X;
\end{split}
\end{equation}

\item 
For any object $X \in \sV$ which may be dependent on
$\bk,
\mathbf{n_1},\cdots,\mathbf{n_\bk}$, there is
\begin{equation}
\label{eq:7}
\begin{split}
 & \int^{\bt,  \mathbf{j_1},\cdots,\mathbf{j_\bt}}
  \big(\Lambda(-,\mathbf{j_1+\cdots + j_t}) \times \Lambda(\mathbf{t},\mathbf{n_1+\cdots+n_{k}}) \big)
 \otimes \big( X \otimes \sD_3(\mathbf{j_1}) \otimes \cdots \otimes \sD_3(\mathbf{j_t}) \big) \\
 & \cong 
  \int^{ \mathbf{j_1},\cdots,\mathbf{j_{n_1 + \cdots + n_k}}} \Lambda(-,\mathbf{j_1+\cdots + j_{\mathbf{n_1+\cdots+n_{k}}}}) 
 \otimes \big( X \otimes \sD_3(\mathbf{j_1}) \otimes \cdots \otimes \sD_3(\mathbf{j_{\mathbf{n_1+\cdots+n_{k}}}}) \big)
\end{split}
\end{equation}
\end{itemize}
\end{lem}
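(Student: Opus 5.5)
Both isomorphisms are instances of the co-Yoneda lemma (density formula) in the setting where the ``scalars'' are finite sets acting on $\sV$ via $S \otimes X = \amalg_S X$. The key point is to keep $X$ (and the $\sD_3$-factors) outside the part being simplified. We never commute $\otimes$ of $\sV$ with colimits; we only use \autoref{prop:tensor-colim}: $-\otimes X$ from finite sets commutes with finite colimits, and $T\otimes -$ commutes with all colimits. The hom-sets $\Lambda(\mathbf{a},\mathbf{b})$ are finite, so all set-level manipulations stay in $\mathrm{FinSet}$.

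For \autoref{eq:6}, I would first treat $X$ as a constant (it does not depend on the integration variables $\mathbf{r_1},\dots,\mathbf{r_k}$). Write $\mathbf{J_i} = \mathbf{j_{i,1}+\cdots+j_{i,n_i}}$. At the level of finite sets, the coend
\[
\int^{\mathbf{r_1},\dots,\mathbf{r_k}} \Lambda(-,\mathbf{r_1+\cdots+r_k}) \times \Lambda(\mathbf{r_1},\mathbf{J_1}) \times\cdots\times \Lambda(\mathbf{r_k},\mathbf{J_k})
\]
is the left Kan extension along $\vee: (\Lambda^{op})^k\to\Lambda^{op}$ of the representable $\prod_i \Lambda(-,\mathbf{J_i})$. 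By Yoneda this is $\Lambda(-,\mathbf{J_1}+\cdots+\mathbf{J_k})$: the coend is a coequalizer of coproducts of finite sets, and the Yoneda collapse happens in $\mathrm{Set}$ through the identity elements. One caveat is that the coend is indexed by the infinite category $\Lambda^k$. However, by the standard co-Yoneda argument it is computed by a reflexive coequalizer whose colimit is attained on the finite subdiagram indexed by $\mathbf{r_i}\le \mathbf{J_i}$. Equivalently, one can note that the coend is a colimit over the category of elements, which has a terminal object $(\mathbf{J_1},\dots,\mathbf{J_k},\mathrm{id})$ for each fixed target. Using the terminal object is cleaner: the colimit then equals the value at that object, so tensoring with $X$ commutes with it for free. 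This is because any functor preserves colimits over a category with a terminal object. Applying $-\otimes X$ then gives \autoref{eq:6} without any commutation hypothesis.

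For \autoref{eq:7}, I would first integrate out $\mathbf{t}$. For fixed $\mathbf{j_1},\dots,\mathbf{j_t}$, the factor $\Lambda(\mathbf{t},\mathbf{N})$ with $N=n_1+\cdots+n_k$ is covariant in $\mathbf{t}$ paired with the contravariant dependence through the $\mathbf{j}$'s. So I rewrite the integrand as a functor of $\mathbf{t}$ given by $G(\mathbf{t}) = \int^{\mathbf{j_1},\dots,\mathbf{j_t}}\Lambda(-,\mathbf{j_1+\cdots+j_t})\otimes(X\otimes\sD_3(\mathbf{j_1})\otimes\cdots\otimes\sD_3(\mathbf{j_t}))$. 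This is a $\Lambda^{op}$-functor in $\mathbf{t}$, with functoriality in $\sigma_i$ coming from the base map $\eta$ of $\sD_3$, exactly as in the functoriality of $(\sD_1\sm\sD_2)_\bk$. Then \autoref{eq:7} reads $\int^{\mathbf{t}}\Lambda(\mathbf{t},\mathbf{N})\otimes G(\mathbf{t})\cong G(\mathbf{N})$. Again this is co-Yoneda, realized as a colimit over a category of elements with terminal object $(\mathbf{N},\mathrm{id})$, so no interchange of $\otimes$ with colimits is needed. Fubini for coends then lets me perform the $\mathbf{t}$-integration first. Fubini only interchanges colimits with colimits and is valid in any cocomplete $\sV$.

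The main obstacle is making the terminal-object argument rigorous. Concretely, one must check that the coend $\int^{\mathbf t}\Lambda(\mathbf t,\mathbf N)\otimes G(\mathbf t)$ really is the colimit of $G$ over the category of elements of $\Lambda(-,\mathbf N)$ (i.e.\ $\Lambda\downarrow \mathbf N$), with the copower written as $\amalg_{\Lambda(\mathbf t,\mathbf N)}G(\mathbf t)$. One must also check that the resulting map to $G(\mathbf N)$ is the one induced by the $\Lambda$-functoriality, in particular the base point insertions via $\eta$. Once these identifications are in place, both isomorphisms follow formally.
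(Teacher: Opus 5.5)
Your proposal is correct and is essentially the paper's argument: both isomorphisms come from the (co-)Yoneda lemma for finite-set copowers, using only \autoref{prop:tensor-colim} and never commuting the tensor of $\sV$ with colimits, and for \autoref{eq:7} your $G$ is exactly the paper's auxiliary functor $F$, with $\int^{\bt}\Lambda(\bt,\mathbf{N})\otimes G(\bt)\cong G(\mathbf{N})$. The differences are cosmetic: for \autoref{eq:6} you collapse all the $\mathbf{r_i}$ at once via the terminal object $(\mathbf{J_1},\dots,\mathbf{J_k},\mathrm{id})$ of the category of elements, whereas the paper applies Yoneda one variable $\mathbf{r_i}$ at a time; and since the $\mathbf{j}$'s are indexed by $\bt$, it is the $\mathbf{j}$-coend that is taken first (with the finite copower $\Lambda(\bt,\mathbf{N})\otimes-$ pulled out of it), and only then the $\bt$-coend.
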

\begin{proof}
 For \autoref{eq:6}, observe that
 \begin{equation}
\begin{split}
 &  \int^{\substack{\mathbf{r_1} , \cdots , \mathbf{r_k}}} \big( \Lambda(-,\mathbf{r_1+\cdots
  +r_k}) \\
& \phantom{==== }\times  \Lambda(\mathbf{r_1},\mathbf{j_{1,1}+\cdots
      +j_{1,n_1}}) \times \cdots \times \Lambda(\mathbf{r_k},\mathbf{j_{k,1}+\cdots
  +j_{k,n_k}})\big)  \otimes X\notag\\
& \cong \int^{\substack{\mathbf{r_k}}}\cdots \int^{\substack{\mathbf{r_1}} } \big( \Lambda(-,\mathbf{r_1+\cdots
  +r_k}) \\
& \phantom{==== }\times  \Lambda(\mathbf{r_1},\mathbf{j_{1,1}+\cdots
      +j_{1,n_1}}) \times \cdots \times \Lambda(\mathbf{r_k},\mathbf{j_{k,1}+\cdots
  +j_{k,n_k}})\big)  \otimes X
\end{split}
\end{equation}

By the Yoneda lemma, we have 

\begin{equation*}
    \Lambda(-,\mathbf{j_{1,1}}+\cdots+\mathbf{j_{1,n_1}}+\mathbf{r_2}+\cdots+\mathbf{r_k})\cong\int^{\mathbf{r_1}}\Lambda(-,\mathbf{r_1}+\cdots +\mathbf{r_k})\times \Lambda(\mathbf{r_1},\mathbf{j_{1,1}}+\cdots+\mathbf{j_{1,n_1}})
\end{equation*}

The claim follows by repeating the process for $\mathbf{r_2},\cdots,
\mathbf{r_k}$ and \autoref{prop:tensor-colim}.

  For \autoref{eq:7}, let $F: \Lambda^{op} \to \sV$ be given by
  $$F(\bt) = \int^{\mathbf{j_1, \cdots, j_{t}}} \Lambda(-,
  \mathbf{j_1+ \cdots + j_t}) \otimes (X \otimes \sD_3(\mathbf{j_1}) \otimes \cdots \otimes \sD_3(\mathbf{j_t})).$$
    By the Yoneda lemma, $\int^{\bt} \Lambda(\bt, \mathbf{n_1 + \cdots + n_k}) \otimes F(\bt) \cong
    F(\mathbf{n_1 + \cdots + n_k})$. The claim follows from \autoref{prop:tensor-colim}.
\end{proof}

\begin{prop}\label{3case}
  There are two natural transformations
  $$s_1 : \mu_3(\sD_1,\sD_2,\sD_3) \xrightarrow{} (\sD_1\odot_\Lambda  \sD_2)\odot_\Lambda \sD_3
  \text{ and }  s_2 :  \mu_3(\sD_1,\sD_2,\sD_3) \xrightarrow{} \sD_1\odot_\Lambda (
  \sD_2\odot_\Lambda \sD_3).$$ When $\otimes$ commutes with colimits in $\sV$, $s_1$ and $s_2$ are isomorphisms. 
\end{prop}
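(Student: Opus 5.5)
We construct both maps by writing the targets as iterated coends and comparing them with the single coend defining $\mu_3$. The only non-formal ingredient is the canonical comparison map
\[
\mathrm{colim}_i(A_i\otimes X)\to(\mathrm{colim}_i A_i)\otimes X.
\]
This map always exists, and it is an isomorphism exactly when $\otimes$ commutes with colimits.

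\emph{The map $s_1$.} Unwinding \autoref{2case} twice, we get
\[
(\sD_1\odot_\Lambda\sD_2)\odot_\Lambda\sD_3=\int^{\bt,\mathbf{j_1},\dots,\mathbf{j_t}}\Lambda(-,\mathbf{j_1+\cdots+j_t})\otimes\Big((\sD_1\odot_\Lambda\sD_2)(\bt)\otimes\sD_3(\mathbf{j_1})\otimes\cdots\otimes\sD_3(\mathbf{j_t})\Big).
\]
Here
\[
(\sD_1\odot_\Lambda\sD_2)(\bt)=\int^{\bk,\mathbf{n_\bullet}}\Lambda(\bt,\mathbf{n_1+\cdots+n_k})\otimes\big(\sD_1(\bk)\otimes\sD_2(\mathbf{n_1})\otimes\cdots\otimes\sD_2(\mathbf{n_k})\big).
\]
Consider the summand of $\mu_3$ indexed by $(\bk,\bt,\mathbf{n_\bullet},\mathbf{j_\bullet})$. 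We rewrite $(\Lambda(-,\dots)\times\Lambda(\bt,\dots))\otimes(\cdots)$ as $\Lambda(-,\dots)\otimes\big((\Lambda(\bt,\dots)\otimes(\sD_1\otimes\sD_2^{\otimes k}))\otimes\sD_3^{\otimes t}\big)$. Two kinds of maps are needed for this:
\begin{itemize}
\item isomorphisms $S\otimes(T\otimes X)\cong(S\times T)\otimes X$;
\item comparison maps $T\otimes(X\otimes Y)\to(T\otimes X)\otimes Y$, coming from the universal property of coproducts; this is the direction opposite to the one in the Warning.
\end{itemize}
Next we include each summand into the inner coend $(\sD_1\odot_\Lambda\sD_2)(\bt)$, tensored with $\sD_3(\mathbf{j_\bullet})$. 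This is the comparison $\mathrm{colim}(A_i\otimes Y)\to(\mathrm{colim}A_i)\otimes Y$. Finally we include into the outer coend. We check that these maps are compatible with the coend relations in all variables $\bk,\bt,\mathbf{n_\bullet},\mathbf{j_\bullet}$, including the base-point relations coming from $\eta$. Compatibility holds by naturality of the comparison maps, so they descend to a map out of $\mu_3$. Naturality in the $\sD_i$ is clear.

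\emph{The map $s_2$.} This uses the lemma. We have
\[
\sD_1\odot_\Lambda(\sD_2\odot_\Lambda\sD_3)=\int^{\bk,\mathbf{r_1},\dots,\mathbf{r_k}}\Lambda(-,\mathbf{r_1+\cdots+r_k})\otimes\big(\sD_1(\bk)\otimes(\sD_2\odot\sD_3)(\mathbf{r_1})\otimes\cdots\big),
\]
where each $(\sD_2\odot\sD_3)(\mathbf{r_i})$ is a coend over $\mathbf{n_i},\mathbf{j_{i,1}},\dots,\mathbf{j_{i,n_i}}$. Equation \autoref{eq:7} rewrites $\mu_3$ as a coend over $\bk,\mathbf{n_\bullet}$ and $\mathbf{j_1},\dots,\mathbf{j_{n_1+\cdots+n_k}}$, which we regroup as $\mathbf{j_{i,l}}$. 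Equation \autoref{eq:6} then re-expands $\Lambda(-,\sum\mathbf{j_{i,l}})\otimes X$ as a coend over $\mathbf{r_1},\dots,\mathbf{r_k}$ of $\Lambda(-,\sum\mathbf{r_i})\times\prod_i\Lambda(\mathbf{r_i},\sum_l\mathbf{j_{i,l}})$. Both rewritings are genuine isomorphisms, since by \autoref{prop:tensor-colim} only finite-set tensors are involved. Now we distribute: each factor $\Lambda(\mathbf{r_i},\dots)\otimes\sD_2(\mathbf{n_i})\otimes\sD_3^{\otimes n_i}$ maps into $(\sD_2\odot\sD_3)(\mathbf{r_i})$. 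This again uses the comparison maps and the inclusions of summands into the $k$ inner coends, in each tensor factor.

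\emph{The isomorphism claim, and the main obstacle.} When $\otimes$ commutes with colimits, every comparison map used above is an isomorphism. Then $s_1$ and $s_2$ are composites of isomorphisms, namely Fubini for coends and the Yoneda reductions, so both are isomorphisms. These agree with the associativity isomorphisms of \autoref{thm:MZZ}. The main obstacle is bookkeeping rather than conceptual. One must verify that the summandwise maps respect every coend relation, in particular the $\Lambda$-functoriality in $\bk$ and $\bt$. That functoriality is defined through the base maps $\mI\to\sD_j(\mathbf{0})$ inserted via $\eta$, and for $s_2$ it has to be checked that it matches the base-point identifications inside the inner products $(\sD_2\odot\sD_3)(\mathbf{r_i})$. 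I would handle this by checking it on the generators $\sigma_i$ together with permutations, exactly as functoriality of $(\sD_1\sm\sD_2)_\bk$ in $\bk$ was checked above.
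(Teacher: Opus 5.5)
Your proposal is correct and follows essentially the same route as the paper. In both, $s_1$ is the natural comparison from the single coend of tensors defining $\mu_3$ to the nested coend (tensor with the inner coend), and $s_2$ is the composite of the isomorphisms \autoref{eq:7} and \autoref{eq:6}, followed by the coproduct-of-tensors to tensor-of-coproducts map and then the coend-of-tensors to tensor-with-coend map. The paper also records that $s_1$ and $s_2$ are pointwise in $\overline{S}$, which you do not state; this is not needed for the proposition itself but is what later lets $L$ carry $\mu_3$ to the iterated Kelly product in $L\sV$.
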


\begin{proof}
  We have
  {\small
  \begin{align*}
     & (\sD_1\odot_\Lambda \sD_2) \odot_\Lambda \sD_3 \\
     &= \int^{\bt, \mathbf{j_1}, \cdots, \mathbf{j_t}} \Lambda(-,\mathbf{j_1+\cdots +j_t}) \otimes
       \Big( (\sD_1\odot_\Lambda \sD_2)(\bt)
      \otimes \sD_3(\mathbf{j_1}) \otimes \cdots \otimes \sD_3(\mathbf{j_t}) \Big) \\
    &= \int^{\substack{\bt, \mathbf{j_1}, \cdots, \mathbf{j_t}}}  \Lambda(-,\mathbf{j_1+\cdots
      +j_t}) \otimes \bigg( \Big(\int^{\substack{\bk, \mathbf{n_1} , \cdots , \mathbf{n_k}}}
      \Lambda(\mathbf{t},\mathbf{n_1+\cdots+n_{k}}) 
      \\
     & \phantom{= } \otimes \sD_1(\mathbf{k})\otimes  \sD_2(\mathbf{n_1}) \otimes \cdots  \otimes \sD_2(\mathbf{n_k}) \Big)\otimes
       \sD_3(\mathbf{j_1}\big) \otimes \cdots \otimes \sD_3(\mathbf{j_t}) \bigg).
    \end{align*}
    }
Comparing with the formula in \autoref{3compose}, the natural transformation $s_1$ is given
by the natural map from  coend of tensors to the tensor with the coend. Notice that $s_1$ is pointwise in $\overline{S}$.
{\small
  \begin{align*}
     & \sD_1\odot_\Lambda (\sD_2 \odot_\Lambda \sD_3) \\
     &= \int^{\substack{\bk, \mathbf{r_1} , \cdots , \mathbf{r_k}}} \Lambda(-,\mathbf{r_1+\cdots
       +r_k}) \otimes \big(\sD_1(\bk) \otimes (\sD_2\odot_\Lambda \sD_3)(\mathbf{r_1}) \otimes \cdots \otimes
        (\sD_2\odot_\Lambda \sD_3)(\mathbf{r_k}) \big)\\ 
    &= \int^{\substack{\bk, \mathbf{r_1} , \cdots , \mathbf{r_k}}} \Lambda(-,\mathbf{r_1+\cdots
      +r_k}) \otimes \bigg( \sD_1(\bk) \otimes \\
    &\phantom{= }  \int^{\substack{\mathbf{n_1}, \mathbf{j_{1,1}}, \cdots, \mathbf{j_{1,n_1}}}}  \Lambda(\mathbf{r_1},\mathbf{j_{1,1}+\cdots
      +j_{1,n_1}})  \otimes (\sD_2(\mathbf{n_1}) \otimes \sD_3(\mathbf{j_{1,1}}) \otimes \cdots \otimes
      \sD_3(\mathbf{j_{1,n_1}})) \otimes \cdots \\
     &\phantom{= } \otimes \int^{\substack{\mathbf{n_k}, \mathbf{j_{k,1}}, \cdots, \mathbf{j_{k,n_k}}}}  \Lambda(\mathbf{r_k},\mathbf{j_{k,1}+\cdots
      +j_{k,n_k}}) \otimes (\sD_2(\mathbf{n_k}) \otimes \sD_3(\mathbf{j_{k,1}}) \otimes \cdots \otimes
      \sD_3(\mathbf{j_{k,n_k}})) \bigg).
        \end{align*}
}
We now define the natural transformation $s_2$ as the following composite:
{\small
\begin{align*}
  \mu_3(\sD_1, \sD_2, \sD_3)
  = & \int^{\substack{\bk, \bt \\
  \mathbf{n_1},\cdots,\mathbf{n_\bk}\\\mathbf{j_1},\cdots,\mathbf{j_\bt}}}
  \big(\Lambda(-,\mathbf{j_1+\cdots + j_t}) \times \Lambda(\mathbf{t},\mathbf{n_1+\cdots+n_{k}}) \big)\\
  & \otimes \bigg( \sD_1(\mathbf{k})\otimes \sD_2(\mathbf{n_1}) \otimes\cdots \otimes \sD_2(\mathbf{n_k})\otimes
    \sD_3(\mathbf{j_1}) \otimes \cdots \otimes \sD_3(\mathbf{j_t}) \bigg) \\ 
\text{ by } \autoref{eq:7} \cong &  \int^{\substack{\bk \\
  \mathbf{n_1},\cdots,\mathbf{n_\bk}\\\mathbf{j_1},\cdots,\mathbf{j_{n_1+\cdots + n_k}}}}
  \Lambda(-,\mathbf{j_1+\cdots + j_{n_1+\cdots+n_{k}}})  \\
  & \otimes \bigg( \sD_1(\mathbf{k})\otimes \sD_2(\mathbf{n_1}) \otimes\cdots \otimes \sD_2(\mathbf{n_k})\otimes
    \sD_3(\mathbf{j_1}) \otimes \cdots \otimes \sD_3(\mathbf{j_{\mathbf{n_1+\cdots+n_{k}}}}) \bigg) \\
  \text{ by } \autoref{eq:6} \cong
    & \int^{\substack{\bk, \mathbf{r_1} , \cdots ,\mathbf{r_k} \\ \mathbf{n_1},
  \mathbf{j_{1,1}}, \cdots, \mathbf{j_{1,n_1}} \\ \cdots \\ \mathbf{n_k},
  \mathbf{j_{k,1}}, \cdots, \mathbf{j_{k,n_k}}}}
  \big(\Lambda(-,\mathbf{r_1+\cdots +r_k}) \\
  & \times  \Lambda(\mathbf{r_1},\mathbf{j_{1,1}+\cdots
      +j_{1,n_1}}) \times \cdots \times  \Lambda(\mathbf{r_k},\mathbf{j_{k,1}+\cdots
      +j_{k,n_k}}) \big) \\
&  \otimes  \bigg( \sD_1(\bk)\otimes \sD_2(\mathbf{n_1}) \otimes \cdots \otimes \sD_2(\mathbf{n_k}) \otimes \sD_3(\mathbf{j_{1,1}}) \otimes \cdots \otimes
      \sD_3(\mathbf{j_{1,n_1}})) \otimes \cdots \\
&\phantom{=} \otimes \sD_3(\mathbf{j_{k,1}}) \otimes \cdots \otimes \sD_3(\mathbf{j_{k,n_k}}) \bigg)
  \\
\longrightarrow & \int^{\substack{\bk, \mathbf{r_1} , \cdots ,\mathbf{r_k} \\ \mathbf{n_1},
  \mathbf{j_{1,1}}, \cdots, \mathbf{j_{1,n_1}} \\ \cdots \\ \mathbf{n_k},
  \mathbf{j_{k,1}}, \cdots, \mathbf{j_{k,n_k}}}} 
 \Lambda(-,\mathbf{r_1+\cdots
      +r_k}) \otimes \bigg( \sD_1(\bk) \otimes \\
    &\phantom{= }   \big(\Lambda(\mathbf{r_1},\mathbf{j_{1,1}+\cdots
      +j_{1,n_1}})  \otimes (\sD_2(\mathbf{n_1}) \otimes \sD_3(\mathbf{j_{1,1}}) \otimes \cdots \otimes
      \sD_3(\mathbf{j_{1,n_1}})) \big) \otimes \cdots \\
     &\phantom{= } \otimes \big( \Lambda(\mathbf{r_k},\mathbf{j_{k,1}+\cdots
      +j_{k,n_k}}) \otimes (\sD_2(\mathbf{n_k}) \otimes \sD_3(\mathbf{j_{k,1}}) \otimes \cdots \otimes
       \sD_3(\mathbf{j_{k,n_k}})) \big) \bigg) \\ 
\longrightarrow & \sD_1\odot_\Lambda (\sD_2 \odot_\Lambda \sD_3),  
\end{align*}
}

\noindent
where the first arrow is from coproduct of tensors to tensor with coproducts and
the second arrow is from coend of tensors to the tensor with the coend.
Notice that $s_2$ is also pointwise in $\overline{S}$.

\end{proof}

\begin{exmp}\label{3unit}
 Writing $ \mu_2 (\sD_1 , \sD_2) =\sD_1 \odot_{\Lambda} \sD_2$, there are canonical maps
  \begin{align*}
  \alpha_{2,0,0} :& \mu_2 (\sD_1 , \sD_2) \xrightarrow{} \mu_3(\sI_1,\sD_1, \sD_2),\\
  \alpha_{2,1,0} :& \mu_2 (\sD_1 , \sD_2) \xrightarrow{} \mu_3(\sD_1, \sI_1, \sD_2),\\
  \alpha_{2,2,0} :& \mu_2 (\sD_1 , \sD_2) \xrightarrow{} \mu_3(\sD_1, \sD_2, \sI_1)   
  \end{align*}
 defined analogously as in \autoref{2unit}. They are in principle
 commuting tensor products with colimits. We shall later use them.
\end{exmp}

\begin{rem}
  \label{rem:Ching}
Replacing $\Lambda$ by $\Sigma$ in \autoref{2case} and \autoref{3compose}, we recover
Ching's definitions in \cite[Definition 2.12]{ChingComposition}. These definitions can be
generalized to $n$-fold Kelly products.
\end{rem}

\subsection{Normal oplax monoidal structure}
To describe the relationship between these $n$-fold products, we need the following definition which generalized the definition for monoidal category.

\begin{defn}(\cite{ChingComposition})\label{normaloplax}
 A normal oplax monoidal category $(\sE, \mu_n , \alpha_{n,l,r},I)$ consists of the following data:

\begin{enumerate}
    \item A object $I$ in $\sV$, called the unit.
    \item A series of functors $\mu_0 , \mu_1, \mu_2 , \cdots $, where $\mu_n : \sE^n
      \xrightarrow{} \sE$. The functor $\mu_0: * \to \sE$ takes the value
      $I$ and $\mu_1: \sE \to \sE$ is the identity functor.
    \item A series of natural transformations
    $$\alpha_{n,l,r} : \mu_n (X_1 , \cdots , X_n) \xrightarrow{} \mu_{n-r+1} (X_1 ,
    \cdots , X_l , \mu_r (X_{l+1} , \cdots , X_{l+r}), X_{l+r+1} , \cdots , X_n)$$
    for triples $(n,l,r)$ satisfying $0 \leqslant l \leqslant l+r \leqslant n$ . 
\end{enumerate}
The data must satisfy the following conditions:
\begin{enumerate}[(a)]
    \item For every $n$ and $l$, $\alpha_{n,l,1}$ and $\alpha_{n,0,n}$ are identities;
    \item \label{item:check-alpha}
      For every valid values of $n, l, r, k, s$, the following two diagrams commute.
\begin{equation*}
    \begin{tikzcd}[nodes={font=\scriptsize}, column sep = -3em]
      \mu_n (X_1 , \cdots , X_n) \ar[dd,"\alpha_{n,k,s}"'] \ar[dr,"\alpha_{n,l,r}"] \\
      & \mu_{n-r+1} (X_1, \cdots ,  \mu_r (X_{l+1} , \cdots , X_{l+r}),
      \cdots , X_n) \ar[dd,"\alpha_{n-r+1,k-r+1,s}"]
      \\
    \mu_{n-s+1} (\cdots , \mu_s (X_{k+1} , \cdots , X_{k+s}), \cdots) \ar[dr, "\alpha_{n-s+1,l,r}"]\\
    & \mu_{n-r-s+2} ( \cdots , \mu_r (X_{l+1} , \cdots , X_{l+r}), \cdots , \mu_s (X_{k+1} , \cdots , X_{k+s}), \cdots)
  \end{tikzcd}
\end{equation*}

\begin{equation*}
  \begin{tikzcd}[nodes={font=\scriptsize}, column sep = -3em]
     \mu_n (X_1 , \cdots , X_n) \ar[dd,"\alpha_{n,k,s}"'] \ar[dr,"\alpha_{n,l,r}"] \\
     & \mu_{n-r+1} (X_1, \cdots , \mu_r (X_{l+1} , \cdots , X_{l+r}), \cdots, X_n)
     \ar[dd,"{\mu_{n-r+1} (X_1, \cdots , \alpha_{r,k-l,s}, \cdots, X_n)}"]\\
    \mu_{n-s+1} (\cdots , \mu_s (X_{k+1} , \cdots , X_{k+s}), \cdots) \ar[dr,"\alpha_{n-s+1,l,r-s+1}"] \\
    & \mu_{n-r+1} ( \cdots , \mu_{r-s+1} (X_{l+1} , \cdots ,\mu_s (X_{k+1} , \cdots , X_{k+s}),\cdots ,  X_{l+r}),  \cdots)
  \end{tikzcd}
\end{equation*}

\end{enumerate}
\end{defn}

A monoidal category is a normal oplax monoidal category
by taking
$$\mu_n (X_1 , X_2 , \cdots , X_n) := (((X_1 \otimes X_2) \otimes \cdots ) \otimes X_n)$$ and
$\alpha$ to be induced by the associators. Conversely, a normal oplax monoidal
structure comes from a monoidal structure if
the natural transformations $\alpha_{n,l,r}$ are all isomorphisms (See
\cite[Example 1.2]{ChingComposition}).

\begin{exmp}[Reduced normal oplax monoidal structure]\label{reduced}
Let $(\sE , \mu_n, \alpha_{n,l,r}, I)$ be a normal oplax monoidal category with $\sE$
admitting a $0$-object. We define a new normal oplax monoidal structure $(\sE,
\mu_n, \overline{\alpha}_{n,l,r}, I)$ by
\begin{equation*}
 \overline{\alpha}_{n,l,r} =   
\begin{cases}
\alpha_{n,l,r} & \text{ for } r > 0  \text{ or } r=n=0, \\
0 & \text{ for } r=0 \text{ and } n > 0.
\end{cases}
\end{equation*}
It is easy to see that the diagrams in \autoref{item:check-alpha} commute.
\end{exmp}
\begin{exmp}[Truncated reduced normal oplax monoidal structure]\label{truncated}
For a reduced normal oplax monoidal structure $(\sE,
\mu_n, \overline{\alpha}_{n,l,r}, I)$ as in \autoref{reduced} and fixed $k \geqslant 1$, we define a new normal oplax monoidal structure $(\sE, \mu_n^{\leqslant k}, \overline{\alpha}_{n,l,r}^{\leqslant k}, I)$ by
\begin{equation*}
\mu_{n}^{\leqslant k} (X_1, \cdots , X_n)=   
\begin{cases}
\mu_{n} (X_1, \cdots , X_n) & \text{for } 0 \leqslant n \leqslant k, \\
0 & \text{for } n \geqslant k+1;
\end{cases}
\end{equation*}

\begin{equation*}
\overline{\alpha}_{n,l,r}^{\leqslant k} =   
\begin{cases}
\overline{\alpha}_{n,l,r} & \text{for } 0 \leqslant n \leqslant k,  \\
0 & \text{for } n \geqslant k+1.
\end{cases}
\end{equation*}
The diagrams in \autoref{item:check-alpha} still commute and this indeed forms a
normal oplax monoidal structure, which we refer to as the $k$-th truncation of
$(\sE, \mu_n, \overline{\alpha}_{n,l,r}, I)$.
\end{exmp}

\begin{exmp}\label{lowercondition}
    From \autoref{2case}, \autoref{2unit}, \autoref{3case} and \autoref{3unit}, we have already obtained the data on $\Lambda^{op}[\sV]$
\begin{equation*}
\mu_n  \text{ for } n \leq 3,\ \alpha_{n,l,r} \text{ for }n\leq 2 \text{ or } (n=3 \text{ and }1\leq r \leq 3).
\end{equation*}
These data satisfy the above compatability conditions. In
\autoref{thm:complete-mu}, we will show that the data can be extended to a
normal oplax monoidal structure.
\end{exmp}

\begin{defn}
Let $(\sE,\mu_n , \alpha_{n,l,r}, I)$ and $(\sF, \nu_n , \beta_{n,l,r},J)$ be two
normal oplax monoidal categories. A morphism $(F,t)$ from $\sE$ to $\sF$ is a functor
$F: \sE \xrightarrow{} \sF$ with natural transformations $t_n: F\circ \mu_n \to \nu_n \circ
F^n$ such that
\begin{enumerate}
    \item $F$ sends $I$ to $J$.
    \item $t_n$ is the identity for $n \leq 1$.
    \item 
      The following diagram commutes for all $(n,l,r)$ satisfies $0 \leqslant l \leqslant l+r \leqslant n$:
\begin{equation*}
\begin{tikzcd}[nodes={font=\scriptsize}]
 F \mu_n (X_1 , \cdots , X_n) 
     \arrow[dd, "{t_n (X_1 , \cdots , X_n)}"']
     \arrow[r, "{F \alpha_{n,l,r}}"]
   & 
   F \mu_{n-r+1} (X_1 , \cdots , \mu_r (X_{l+1} , \cdots , X_{l+r}), \cdots , X_n) 
   \arrow[d, "{t_{n-r+1} (X_1, \cdots , \mu_r(X_{l+1} , \cdots , X_{l+r}), \cdots , X_n)}"] \\
   & \nu_{n-r+1} (F X_1 , \cdots  ,F \mu_r (X_{l+1} , \cdots , X_{l+r}), \cdots , F X_n)
   \ar[d,"{\nu_{n-r+1} (F X_1, \cdots , t_r(X_{l+1} , \cdots , X_{l+r}), \cdots , F X_n)}"]\\
   \nu_n (F X_1 , \cdots , F X_n) 
    \arrow[r, "\beta_{n,l,r}"] 
  &  \nu_{n-r+1} (F X_1 , \cdots  , \mu_r (F X_{l+1} , \cdots , F X_{l+r}), \cdots , F X_n). 
\end{tikzcd}
\end{equation*}
\end{enumerate}
We say $F$ is a normal oplax monoidal functor if there exist the natural transformations $t_n$ completing
it to a morphism of normal oplax monoidal categories.
\end{defn}

\begin{defn}
Let $(\sE, \mu , I)$ be a normal oplax monoidal category. A monoid in $\sE$ is
an object $M$ with morphisms $m_n : \mu_n (M, M, \cdots , M) \xrightarrow{} M$ such
that the following diagram commutes for all $(n,l,r)$ satisfying $0 \leqslant l \leqslant l+r
\leqslant n$:
\begin{equation*}
\begin{tikzcd}[nodes={font=\scriptsize}]
  \mu_n (M , \cdots , M) 
    \arrow[dr, "m_n"'] 
    \arrow[r, "\alpha_{n,l,r}"] 
  & 
  \mu_{n-r+1} (M , \cdots , \mu_r (M , \cdots , M), \cdots , M) 
    \arrow[d, "{m_{n-r+1}(\cdots, m_r, \cdots)}"] \\
  & M 
\end{tikzcd}
\end{equation*}

\end{defn}
When $\sE$ is monoidal category, the two definitions of monoids are consistent.

\begin{lem}\label{lem:Ching}
 (\cite[Prop 3.4]{ChingComposition})

 Suppose we are given an object $M$ in a normal oplax monoidal category $(\sV,
 \mu_n , \alpha_{n,l,r} , I)$ with maps
 $$m_0 : I \xrightarrow{} M \text{ and } m_2 :\mu_2(M,M) \xrightarrow{} M.$$
 They determine a monoid structure on $M$ if and
 only if they satisfy the following associative laws and identity laws:
\begin{enumerate}
\item 
\begin{equation*}
  \begin{tikzcd}[nodes={font=\scriptsize}]
  & \mu_2 (\mu_2(M,M),M) \ar[rr,"{\mu_2(m_2 , M)}"] & & \mu_2(M,M) \ar[rd,"{m_2}"] \\
  \mu_3(M,M,M) \ar[ru, "{\alpha_{3,0,2}}"] \ar[rd,"{\alpha_{3,1,2}}"'] & & & & M\\
  & \mu_2(M,\mu_2(M,M)) \ar[rr,"{\mu_2(M,m_2)}"'] & & \mu_2(M,M) \ar[ru,"{m_2}"']
\end{tikzcd}
\end{equation*}
\item 
\begin{equation*}
\begin{tikzcd}[nodes={font=\scriptsize}]
  M \arrow[r, "{\alpha_{1,0,0}}"] \ar[rd,equal] &  \mu_2(I, M)  \arrow[r, "{\mu_2(m_0, M)}"]
  & \mu_2(M, M) \arrow[ld, "m_2"]\\
  &  M
\end{tikzcd}
\quad
\begin{tikzcd}[nodes={font=\scriptsize}]
  M \arrow[r, "{\alpha_{1,1,0}}"] \ar[rd,equal] &  \mu_2(M, I)  \arrow[r, "{\mu_2(M, m_0)}"]
  & \mu_2(M, M) \arrow[ld, "m_2"]\\
  & M
\end{tikzcd} 
\end{equation*}
\qed
\end{enumerate}

\end{lem}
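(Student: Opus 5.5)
The plan is to treat the two implications separately, with the substance lying in the \emph{if} direction. For \emph{only if}, suppose $\{m_n\}$ is a monoid structure extending $m_0,m_2$. The monoid axiom for $(n,l,r)=(3,0,2)$ says that $m_3 = m_2\circ\mu_2(m_2,M)\circ\alpha_{3,0,2}$. The axiom for $(3,1,2)$ gives $m_3 = m_2\circ\mu_2(M,m_2)\circ\alpha_{3,1,2}$. Equating the two yields the associative law. Similarly, the axiom for $(n,l,r)=(1,0,0)$ reads $m_1 = m_2\circ\mu_2(m_0,M)\circ\alpha_{1,0,0}$. Since $\mu_1=\mathrm{id}$, the axiom for $(1,0,1)$ forces $m_1=\mathrm{id}_M$, because $\alpha_{1,0,1}$ is the identity by condition (a). This gives the left identity law, and $(1,1,0)$ gives the right one.

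For \emph{if}, I will define the $m_n$ recursively by left-bracketing. Set $m_1=\mathrm{id}$, and for $n\ge 3$ put
\[ m_n := m_2\circ \mu_2(m_{n-1},M)\circ \alpha_{n,0,n-1}. \]
It then remains to verify the monoid diagram for every triple $(n,l,r)$. I will do this by induction on $n$, and for fixed $n$ by cases on $r$.

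If $r=1$ or $r=n$, the diagram is trivial by condition (a). If $r=0$, then $\alpha_{n,l,0}$ inserts $\mu_0=I$. In this case I reduce, using the second compatibility diagram of \autoref{normaloplax}, to the unit laws $(1,0,0)$ and $(1,1,0)$ together with the inductive hypothesis. For $2\le r\le n-1$, I compare $\alpha_{n,l,r}$ with $\alpha_{n,0,n-1}$ or $\alpha_{n,1,n-1}$ using the two commuting diagrams in \autoref{normaloplax}(b). This rewrites $m_{n-r+1}(\dots,m_r,\dots)\circ\alpha_{n,l,r}$ through a factorization in which the block $X_{l+1},\dots,X_{l+r}$ lies either inside the first $n-1$ factors, or straddles the last factor.

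In the first situation the inductive hypothesis for $m_{n-1}$ applies directly. In the straddling situation, where $l+r=n$, I pass through $\mu_3$ via $\alpha_{n,\,\cdot,\,\cdot}$ landing in $\mu_3(\mu_{a}(\dots),\mu_b(\dots),M)$. There I use the associative law, transported by naturality of the $\alpha$'s, to move a bracket from right to left.

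The main obstacle is the bookkeeping in this straddling case. It requires choosing the right sequence of $\alpha$-compatibilities, namely the first diagram for disjoint blocks and the second for nested blocks, so that every map is reduced to either an instance of the associative law on $\mu_3(M,M,M)$ or an inductive instance. I would first prove the auxiliary statement that $m_n = m_2\circ\mu_2(m_a,m_b)\circ\alpha'$ for every splitting $n=a+b$, where $\alpha'$ is the canonical composite of $\alpha$'s landing in $\mu_2(\mu_a,\mu_b)$. This is done by induction on $b$ using the associative law. All cases then follow from it together with naturality.
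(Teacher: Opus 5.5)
\textbf{The gap: the $(1,0,1)$ axiom does not give $m_1=\mathrm{id}_M$.} Your \emph{only if} direction uses this claim, and it is false. Since $\mu_1=\mathrm{id}$ and $\alpha_{1,0,1}=\mathrm{id}$, that axiom reads $m_1=m_1\circ\mu_1(m_1)=m_1\circ m_1$. So it only says $m_1$ is idempotent. The other axioms with $r=1$ or $r=n$ likewise only give $m_n\circ\mu_n(\dots,m_1,\dots)=m_n=m_1\circ m_n$.

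With the definition of monoid exactly as the paper writes it, the \emph{only if} direction is in fact false. Work in $(\mathrm{Set},\times,*)$, viewed as a normal oplax monoidal category. Take a monoid $A$, put $M=A\sqcup\{z\}$, and let $e\colon M\to M$ fix $A$ and send $z$ to $1_A$. Set $m_n(x_1,\dots,x_n)=e(x_1)\cdots e(x_n)$, so $m_0=1_A$ and $m_1=e$. Every $m_r$ lands in $A$, where $e$ is the identity, so all the monoid diagrams commute. Yet $m_2\circ\mu_2(m_0,M)\circ\alpha_{1,0,0}=e\neq\mathrm{id}_M$.

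So $m_1=\mathrm{id}_M$ cannot be derived. It has to be part of the definition of a monoid; this is the usual convention for normal oplax monoidal categories, and the cited result needs it, but the paper's definition omits it. Once it is assumed, your \emph{only if} argument from the axioms $(3,0,2)$, $(3,1,2)$, $(1,0,0)$, $(1,1,0)$ is correct.

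\textbf{The \emph{if} direction is sound.} It gives an actual argument where the paper only cites Ching. One correction in the case $r=0$:
\begin{itemize}
\item For $l\le n-1$, the nested-block diagram gives $\alpha_{n+1,0,n}\circ\alpha_{n,l,0}=\mu_2(\alpha_{n-1,l,0},X_n)\circ\alpha_{n,0,n-1}$. The inductive instance $(n-1,l,0)$ then finishes the case.
\item For $l=n$, the nested diagram is vacuous. You need the disjoint-block diagram, which gives $\alpha_{n+1,0,n}\circ\alpha_{n,n,0}=\alpha_{1,1,0}$. Then naturality of $\alpha_{1,1,0}$ with respect to $m_n$ and the right unit law finish it.
\end{itemize}

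Your splitting claim does go through by induction on $b$, with $\alpha'_{a,b}=\alpha_{a+1,0,a}\circ\alpha_{n,a,b}$. The compatibility diagrams give
\begin{itemize}
\item $\mu_2(\mu_a,\alpha_{b,0,b-1})\circ\alpha'_{a,b}=\alpha_{3,1,2}\circ\alpha_{a+2,0,a}\circ\alpha_{n,a,b-1}$, and
\item $\alpha_{3,0,2}\circ\alpha_{a+2,0,a}\circ\alpha_{n,a,b-1}=\mu_2(\alpha'_{a,b-1},X_n)\circ\alpha_{n,0,n-1}$.
\end{itemize}
Naturality of $\alpha_{3,1,2}$ and $\alpha_{3,0,2}$, together with the associative law, then reduces the claim to the case $(a,b-1)$.

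Finally, the axiom $(n,0,n-1)$ forces your recursive formula for $m_n$. It is worth saying so, since it gives the uniqueness implicit in ``determine''.
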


Motivated by this result, we make the following definition.
\begin{defn} \label{defn:equi-normal-oplax}
Let $(\sE,\mu_n , \alpha_{n,l,r}, I)$ and $(\sE, \nu_n , \beta_{n,l,r}, I)$ be  two normal
oplax monoidal structures on $\sE$ with the same unit.
We say they are \emph{equivalent} if there is a zigzag of morphisms
$(\mathrm{id},t^i): (\sE,\mu_n^i) \to (\sE, \nu_n^i)$ and
$(\mathrm{id},s^i): (\sE, \mu_n^{i+1} ) \to (\sE, \nu_n^{i})$ for $0 \leq i \leq k$ with
$\mu_n^0=\mu_n$ and $\mu_n^{k+1} = \nu_n$ such that $t_n^i$ and $s_n^i$ are all
identities for $0 \leq i \leq k$ and $n \leq 3$.
\end{defn}

\begin{cor} \label{cor:equi-monoid}
  Equivalent normal oplax monoidal categories have isomorphic categories of
  monoids.
\end{cor}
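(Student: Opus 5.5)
We use \autoref{lem:Ching} to reduce the monoid structures to data in arities at most $3$, and then compare those data across the zigzag. Since equivalence is defined by a zigzag, it suffices to treat a single morphism $(\mathrm{id},t)\colon(\sE,\mu_n,\alpha_{n,l,r},I)\to(\sE,\nu_n,\beta_{n,l,r},I)$ with $t_n=\mathrm{id}$ for $n\le 3$. For such a morphism we will produce an isomorphism of categories of monoids that is the identity on underlying objects. Composing these isomorphisms and their inverses along the zigzag then gives the result.

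\emph{Step 1: the lower data agree.} Because $t_n$ is the identity for $n\le3$, we have $\mu_n=\nu_n$ as functors for $n\le 3$. We then read off the compatibility square in the definition of a morphism for each triple $(n,l,r)$ with $n\le 3$. In that square every vertical map is a $t_k$ with $k\le 3$, or $\nu_{n-r+1}$ applied to such a $t_r$, so every vertical map is an identity. The square therefore says $\alpha_{n,l,r}=\beta_{n,l,r}$ whenever $n\le 3$. In particular this holds for $\alpha_{3,0,2},\alpha_{3,1,2},\alpha_{1,0,0},\alpha_{1,1,0}$ and their $\beta$ counterparts.

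\emph{Step 2: apply \autoref{lem:Ching}.} By \autoref{lem:Ching}, a monoid in either structure is the same as an object $M$ together with maps $m_0\colon I\to M$ and $m_2\colon\mu_2(M,M)\to M$ satisfying the associativity and unit diagrams. Those diagrams involve only $\mu_2$, $\mu_3$, $\alpha_{3,0,2}$, $\alpha_{3,1,2}$, $\alpha_{1,0,0}$ and $\alpha_{1,1,0}$, and by Step 1 these coincide with their $\nu$ and $\beta$ counterparts. So the pairs $(m_0,m_2)$ that determine monoids are exactly the same for both structures. We must also check morphisms of monoids. A monoid morphism is a map $f\colon M\to M'$ commuting with all the $m_n$. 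The higher $m_n$ are determined from $m_0$ and $m_2$ by iterating $\alpha$, exactly as in the proof of \autoref{lem:Ching}. Hence commuting with $m_0$ and $m_2$ is equivalent to being a monoid morphism, and this condition depends only on the low-arity data, which are shared.

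Step 2 is the main obstacle, because it relies on the precise content of \autoref{lem:Ching}. We need two facts from it: that $m_n$ is uniquely determined by $m_0$ and $m_2$ (presumably $m_n=m_2\circ\mu_2(m_{n-1},M)\circ\alpha_{n,0,n-1}$), and that the constraints on $(m_0,m_2)$ are exactly the displayed diagrams. If morphisms are not explicitly covered there, we will extract uniqueness of $m_n$ from the recursion and check by induction on $n$ that $f$ intertwines $m_n$. This induction uses naturality of $\alpha_{n,0,n-1}$ in its arguments.
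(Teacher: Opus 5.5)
Your proposal is correct and follows the paper's proof. You reduce along the zigzag to a single morphism $(\mathrm{id},t)$ with $t_n=\mathrm{id}$ for $n\le 3$ and then invoke \autoref{lem:Ching}. Your Step 1, and your handling of monoid morphisms via the recursion $m_n=m_2\circ\mu_2(m_{n-1},M)\circ\alpha_{n,0,n-1}$ for $n\ge 3$ together with naturality, spell out what the paper leaves implicit.

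One small slip: for $(n,l,r)=(3,l,0)$ the compatibility square involves $t_4$, so it only gives $t_4\circ\alpha_{3,l,0}=\beta_{3,l,0}$, not equality. This is harmless, because you only need $\alpha_{3,0,2},\alpha_{3,1,2},\alpha_{1,0,0},\alpha_{1,1,0}$, and their squares involve only $t_k$ with $k\le 3$.
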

\begin{proof}
  It suffices to prove that $(\mathrm{id}, t):  (\sE,\mu_n , \alpha_{n,l,r}, I) \to
  (\sE, \nu_n , \beta_{n,l,r}, I)$ with $t_n= \mathrm{id}$ for $n \leq 3$ induces
  $\Mon(\sE, \nu_n) \cong \Mon(\sE, \mu_n)$. This is true because of \autoref{lem:Ching}.
\end{proof}

In a monoidal category, higher multiplications are completely determined by the
multiplication of two objects. For normal oplax monoidal categories this is no
longer the case. For example, the truncated reduced normal oplax monoidal
structure $(\sE, \mu_n^{\leqslant k}, \overline{\alpha}_{n,l,r}^{\leqslant k} , I)$ defined in
\autoref{truncated} are equivalent but not isomorphic for all $k \geqslant 3$.

\subsection{Lifting normal oplax monoidal structures}

In this subsection, we fix a regular cardianl $\kappa > \omega$ and use small to mean
$\kappa$-small, as all diagrams considered will be $\kappa$-small.

Recall that in \autoref{commutewithtensor} we constructed a monoidal
localization
$$L: \sV \to L\sV,$$ such that $\otimes$ commutes with colimits in $L\sV$.
Therefore, $(\Lambda^{op}[L\sV]_{\sI_0}, \odot_{\Lambda}, \sI_0)$  is a monoidal category.
We will prove:
\begin{thm}\label{mainthm}
  There exists a normal oplax monoidal structure on $\Lambda^{op}[\sV]_{\sI_0}$,
  denoted $(\Lambda^{op}[\sV]_{\sI_0}, \mu_n , \alpha_{n,l,r},I)$, such that  
\begin{enumerate}
\item The unit $I = \sI_1$.
\item $\mu_2(\sD_1, \sD_2) = \sD_1 \odot_\Lambda \sD_2$ is as in \autoref{2case}.

\item \label{item:mu3} $\mu_3 (\sD_1, \sD_2, \sD_3)$ is as in \autoref{3compose}.
\item  The natural transformations $\alpha_{n,l,r}$  for $n\leq 2$, or $n=3$ and $1 \leq
  r \leq 3$ are given as in \autoref{lowercondition}.

\item $L: \Lambda^{op}[\sV]_{\sI_0} \to \Lambda^{op}[L\sV]_{\sI_0}$ is a functor of normal oplax monoidal categories.

\end{enumerate}
 And additionally, 
 for any normal oplax monoidal structure $(\Lambda^{op}[\sV], \nu_n , \beta_{n,l,r}, I)$
 on $\Lambda^{op}[\sV]$ satisfying the above properties, there exists a morphism from
 $(\Lambda^{op}[\sV], \nu_n)$ to $(\Lambda^{op}[\sV], \mu_n)$.

\end{thm}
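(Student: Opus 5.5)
The plan is to build the higher products $\mu_n$ for $n\ge 4$ by a universal ``pullback along $L$'' construction. This is how the promised general result \autoref{thm:complete-mu} would be used: any compatible lower data (here $\mu_n$ and $\alpha_{n,l,r}$ for $n\le 3$ from \autoref{lowercondition}) extends to a normal oplax structure, and there is a universal extension.

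\textbf{Step 1: the target structure in $L\sV$.} On $\Lambda^{op}[L\sV]_{\sI_0}$, \autoref{commutewithtensor} makes $\otimes$ commute with colimits. Hence $\odot_\Lambda$ is an honest monoidal product with unit $\sI_1$ (\autoref{thm:MZZ}), and it yields a normal oplax structure $(\nu_n,\beta_{n,l,r})$ whose $\beta$'s are isomorphisms built from associators. By \autoref{Lpreserve}, $L$ carries $\mu_2,\mu_3$ and the lower $\alpha$'s to $\nu_2,\nu_3$ and the corresponding $\beta$'s. The maps $s_1,s_2$ of \autoref{3case} and the unit maps of \autoref{2unit}, \autoref{3unit} become isomorphisms after $L$, since they are pointwise in $\overline{S}$.

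\textbf{Step 2: define $\mu_n$ for $n\ge 4$.} Define $\mu_n(X_1,\dots,X_n)$ as the terminal object receiving compatible maps from the lower data. Concretely, take it to be the limit, over all ways of parenthesizing $(X_1,\dots,X_n)$ into iterated lower-arity products $\mu_{n'}$ with $n'<n$, of the corresponding iterated products. These are glued along the $\alpha$'s between them and fibered over $L^{-1}$ of $\nu_n(LX_1,\dots,LX_n)$. The precise limit cone uses the maps induced by the $\alpha$'s among the parenthesizations. This $\kappa$-small limit exists because $\sV$ is complete.

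The $\alpha_{n,l,r}$ for $n\ge4$ are then the limit projections. The $\alpha_{3,l,0}$ and $\alpha_{n,l,0}$ (unit insertions) are obtained the same way, with the inserted unit treated as a slot. The coherence diagrams \autoref{item:check-alpha} commute because both composites are projections from the same limit onto the same doubly-parenthesized term. Axiom (a) holds by construction. For $n\le3$ the compatibility is the content of \autoref{lowercondition}.

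\textbf{Step 3: $L$ is a morphism.} Since $L$ preserves $\kappa$-small limits, $L\mu_n$ is the analogous limit in $L\sV$. This gives a canonical map $t_n\colon L\mu_n\to\nu_n L$, namely the projection onto the fully left-parenthesized term composed with the inverse associators. Compatibility with $\beta$ follows from the monoidal coherence of $\nu$.

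\textbf{Step 4: universality.} Given another structure $(\nu'_n,\beta'_{n,l,r})$ agreeing in arity $\le3$ with $L$ a morphism, its $\beta'$ maps supply a cone from $\nu'_n$ to the diagram defining $\mu_n$. The oplax axioms are exactly the cone conditions. This gives maps $t_n\colon\nu'_n\to\mu_n$ by induction on $n$, equal to the identity for $n\le3$, and the morphism axiom holds by construction.

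The main obstacle is Step 2: choosing the indexing diagram so that it is small and well defined inductively, and so that all the coherence squares in \autoref{item:check-alpha} (in particular the nested second diagram) are literally projection identities. The first thing I would try is induction on $n$, indexing by planar trees with $n$ leaves and at least one internal vertex of arity $<n$, with morphisms given by edge contraction.
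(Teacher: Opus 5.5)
Your architecture matches the paper's. For $n\ge4$ the paper sets $\mu_n(\vec\sD)=\lim_{W_n}D[\vec\sD,\mu]$, where $W_n$ is your tree poset with every vertex of arity between $2$ and $n-1$. Unary vertices must be excluded, or $\mu_n$ itself reappears via $\alpha_{n,l,1}=\mathrm{id}$. The $\alpha_{n,l,r}$ with $2\le r\le n-1$ are the projections, and universality is your cone argument.

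Two points need repair. First, delete ``fibered over $L^{-1}$ of $\nu_n(LX_1,\dots,LX_n)$''. Morphisms of $L\sV$ are classes of zigzags and need not come from $\sV$, so no object of $\sV$ receives maps in $\sV$ from the bracketed products and represents $\nu_n(L\vec X)$. Any such extra vertex would also break Step~4, since a competing $\nu'_n$ has no map to it. The limit must not mention $L$ at all.

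Second, and more substantively, Step~3 is missing the key input: $W_n$ is connected for $n\ge4$ (\autoref{annex}). After applying $L$ every arrow of the diagram is an isomorphism. Connectedness is what makes $L\mu_n\cong\nu_n(L\vec\sD)$. More to the point, it relates the projection onto the left-normed bracketing to the projection onto $(x_1,\dots,(x_{l+1},\dots,x_{l+r}),\dots,x_n)$ for $l>0$; these two elements are incomparable in $W_n$. Compatibility of your $t_n$ with $\beta_{n,l,r}$ needs exactly this, and coherence of $\nu$ alone cannot supply it. $W_3$ shows the danger: it has two incomparable elements, so there the limit becomes $\nu_3\times\nu_3$ after $L$. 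That is why $\mu_3$ must be given by hand.

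Finally, the unit insertions are not projections. The map $\alpha_{n-1,l,0}$ goes \emph{into} the limit $\mu_n(\dots,I,\dots)$, so you must build a cone.
\begin{itemize}
\item For most $w\in W_n$, delete the unit letter to get $w^\circ\in W_{n-1}$, project, and apply lower unit insertions.
\item When the deletion leaves a block of length $n-1$, $w^\circ\notin W_{n-1}$. This happens for $w=(x_1,(x_2,\dots,x_n))$ with $l=0$, for $((x_1,\dots,x_{n-1}),x_n)$, and when the unit is paired with a neighbour in a binary block under a root of arity $n-1$. In these cases you must use $\alpha_{1,0,0}$ or $\alpha_{1,1,0}$ directly, possibly applied inside $\mu_{n-1}$.
\item You must then check that these legs form a cone.
\end{itemize}
Consequently, your claim that every coherence square is an identity between projections covers only $r,s\neq0$. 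For $s=0$ the squares hold by the choice of legs. For $r=s=0$, i.e.\ two unit insertions $\mu_{n-2}\to\mu_n$, you must compare the two cones leg by leg, according to how many of the two unit letters lie in the inner block.
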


\begin{defn}\label{annexation}
    For $n \geq 3$, we define $W_n$ to be the poset of all ways to combine $n$
    letters into one with parentheses such that each parenthesis has length of at least $2$ and at
    most ${n-1}$. The length of a parenthesis is the number of factors, where
    parenthesized letters count as one factor.
\end{defn}

\begin{exmp}
    $W_3$ has two elements $((\e_1,\e_2),\e_3)$ and
    $(\e_1,( \e_2,\e_3))$. The following poset is part of $W_4$.
\begin{equation*}
  \begin{tikzcd}
    (((\e_1,\e_2),\e_3),\e_4) & ((\e_1,\e_2),\e_3, \e_4) \ar[l] \ar[r]
    & ((\e_1,\e_2),(\e_3, \e_4))
  \end{tikzcd}
\end{equation*}
\end{exmp}

\begin{lem}\label{annex}
    When $n \geqslant4$,  $W_n$ is connected. 
\end{lem}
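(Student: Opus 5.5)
The plan is to identify $W_n$ with the face poset of the associahedron with its top cell removed, and to show that every element of $W_n$ is connected to a binary parenthesization, and that any two binary parenthesizations are connected through elements of $W_n$.

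Concretely, an element of $W_n$ is a planar rooted tree with $n$ labelled leaves $\e_1,\dots,\e_n$ in order. Each internal vertex (a parenthesis) has between $2$ and $n-1$ children. The bound $n-1$ excludes exactly the corolla $(\e_1,\dots,\e_n)$, since any non-root vertex automatically has fewer than $n$ children. The order relation, as in the example of $W_4$, relates a tree to any tree obtained from it by inserting one additional parenthesis, that is, by splitting a vertex. Connectedness of the underlying graph is all that is needed, so it suffices to exhibit zigzags of such single-step relations.

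\textbf{Step 1: every element is connected to a binary tree.} Let $w\in W_n$ contain a parenthesis $(Y_1,\dots,Y_m)$ with $m\ge 3$, where the $Y_i$ are letters or parenthesized blocks. Replace it by $((Y_1,Y_2),Y_3,\dots,Y_m)$. The new parenthesis has length $2$ and the modified one has length $m-1\ge 2$, and all lengths stay $\le n-1$, so the result lies in $W_n$ and is related to $w$. The total number of parentheses strictly increases, so iterating terminates at a fully binary parenthesization.

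\textbf{Step 2: binary trees are connected.} Any two binary parenthesizations of $n$ letters are connected by a finite sequence of elementary rotations
\[
(\dots((A,B),C)\dots)\;\leftrightarrow\;(\dots(A,(B,C))\dots),
\]
which is the classical connectedness of the Tamari lattice; one can also argue by induction, moving every tree to the left comb. Both sides of a rotation refine the common element $(\dots(A,B,C)\dots)$, obtained by merging the two parentheses into one of length $3$. This element lies in $W_n$ precisely because $3\le n-1$, which is where the hypothesis $n\ge 4$ enters. For $n=3$ the merged element would be the excluded corolla, and indeed $W_3$ is disconnected. Each rotation therefore yields a zigzag of length two in $W_n$, and combining this with Step 1 shows that $W_n$ is connected.

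The only point requiring care is the bookkeeping that the intermediate elements obey the length bounds $2\le\text{length}\le n-1$; the potential obstacle is exactly the merged ternary vertex, handled as above. I would cite, or briefly prove by induction on $n$, the connectedness of binary trees under rotations. The induction: rotate at the root until the right subtree is a single letter, then recurse on the left subtree.
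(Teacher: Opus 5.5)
Your proof is correct, but it takes a different route from the paper's.

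\textbf{The paper's route.} The paper first refines an element until it contains a parenthesis around two consecutive single letters. It then \emph{coarsens} to one of the $n-1$ special elements $(x_1,\dots,(x_i,x_{i+1}),\dots,x_n)$, which have an outer parenthesis of length $n-1$ and a single binary inner one. Finally it links consecutive members of this linearly indexed family by one explicit zigzag through $(x_1,\dots,((x_{i-1},x_i),x_{i+1}),\dots,x_n)$, $(x_1,\dots,(x_{i-1},x_i,x_{i+1}),\dots,x_n)$ and $(x_1,\dots,(x_{i-1},(x_i,x_{i+1})),\dots,x_n)$.

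\textbf{Your route.} You go the other way. You refine all the way to binary parenthesizations, which are the vertices of the associahedron. You then need a global input: binary trees are connected under rotations. Your left-comb induction supplies this correctly. The local move is the same in both arguments, namely passing through a ternary coarsening $(\dots(A,B,C)\dots)$. In both, $n\ge 4$ enters exactly through the admissibility of that ternary parenthesis, and also, in the paper's zigzag, through the outer length $n-2\ge 2$.

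\textbf{What each buys.} The paper's argument is slightly more economical. Its target family is a chain of only $n-1$ elements, so no rotation-connectivity lemma is needed. Yours gives the cleaner conceptual picture. $W_n$ is the face poset of $\partial K_n\cong S^{n-3}$, and once $n\ge 4$ it contains the entire $1$-skeleton of $K_n$, since every edge has exactly one ternary vertex. This also shows at a glance why $W_3$ is disconnected.
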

\begin{proof}
 By adding parentheses, any element in $W_n$ is connected to some element of the form $(\letter_1 ,
 (\cdots,(\letter_i , \letter_{i+1}),\cdots), \letter_n)$ where there is a parenthesis containing two
 single consecutive letters. By erasing parentheses, this element is further
 connected to the element $(\letter_1, \cdots,(\letter_i , \letter_{i+1}),\cdots, \letter_n)$ where there are only two pairs of
 parenthesis. Finally, any two ``adjacent'' elements of this kind are connected
 as seen in the following diagram.
\begin{center}
    \begin{tikzpicture}
   \node (a)at(-4,2){$(\letter_1 , \cdots , (\letter_{i-1} , \letter_i) , \cdots , \letter_n)$};
   \node (b)at(-3,0){$(\letter_1 , \cdots , ((\letter_{i-1} , \letter_i) , \letter_{i+1}), \cdots , \letter_n)$};
   \node (c)at(0,1.25){$(\letter_1 , \cdots , (\letter_{i-1} , \letter_i , \letter_{i+1}), \cdots , \letter_n)$};
   \node (d)at(3,0){$(\letter_1 , \cdots , (\letter_{i-1} ,( \letter_i , \letter_{i+1})), \cdots , \letter_n)$};
   \node (e)at(4,2){$(\letter_1 , \cdots , (\letter_i , \letter_{i+1}) , \cdots , \letter_n)$};
   \draw[->](a)--(b);
   \draw[->](c)--(b);
   \draw[->] (c)--(d);
   \draw[->] (e)--(d);
    \end{tikzpicture}
  \end{center}

\end{proof}

\begin{thm} \label{thm:complete-mu}
  Let $\sE$ be a category with small limits. Suppose that the data
\begin{equation*}
\mu_n  \text{ for } n \leq 3,\ \alpha_{n,l,r} \text{ for }n\leq 2 \text{ or } (n=3 \text{ and }1\leq r \leq 3).
\end{equation*}
 have been defined and the conditions in \autoref{normaloplax} are satisfied
 whenever the terms are defined.
 Then one can define $\mu_n$ for $n \geq 4$ and $\alpha_{n,l,r}$ for the remaining cases to obtain a normal oplax
 monoidal structure on $\sE$. Moreover, there is a terminal object $(\sE, \mu_n ,
 \alpha_{n,l,r},I)$ in all such structures. Therefore, all such structures are 
 equivalent in the sense of \autoref{defn:equi-normal-oplax}.
\end{thm}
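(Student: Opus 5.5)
We define the higher products inductively as limits, which is where the hypothesis that $\sE$ has small limits enters. Suppose $\mu_m$ and all $\alpha_{m,l,r}$ are defined for $m<n$, with $n\geq 4$. For each element $w\in W_n$ (\autoref{annexation}), let $\mu_w(X_1,\dots,X_n)$ be the iterated product dictated by the parenthesization: each parenthesis of length $r$ is evaluated by $\mu_r$, and $r\leq n-1$ throughout. A morphism $w\to w'$ in $W_n$ inserts or erases a layer of parentheses. It therefore induces a natural map $\mu_{w'}\to\mu_w$ or $\mu_w\to\mu_{w'}$ (direction fixed by the poset orientation), built from some $\alpha_{m,l,r}$ with $m<n$ applied inside the appropriate factor. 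By the inductive hypothesis, i.e.\ condition \autoref{item:check-alpha} for smaller arities, these maps compose consistently, so $w\mapsto\mu_w$ is a functor from $W_n$ (or its opposite) to $\mathrm{Fun}(\sE^n,\sE)$.

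Next we define $\mu_n$ and the missing structure maps. Set $\mu_n:=\lim_{w\in W_n}\mu_w$, taken pointwise in $\sE$; it is small because $W_n$ is finite. For $r\neq 0,1,n$, define $\alpha_{n,l,r}$ as the limit projection onto the vertex $w=(X_1,\dots,(X_{l+1},\dots,X_{l+r}),\dots,X_n)$, whose value is exactly $\mu_{n-r+1}(\dots,\mu_r(\dots),\dots)$. Set $\alpha_{n,l,1}$ and $\alpha_{n,0,n}$ to be identities. The unit maps $\alpha_{n,l,0}$ need a separate step. For $n=3$, $r=0$, the target $\mu_4$ is itself a limit, so $\alpha_{3,l,0}$ can be defined by specifying compatible maps $\mu_3\to\mu_w(\dots,I,\dots)$ for every $w\in W_4$. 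Each such map factors through a vertex where the unit is bracketed with a neighbour, using the given $\alpha_{2,\cdot,0}$ and $\alpha_{3,\cdot,\cdot}$. In general, $\alpha_{n,l,0}\colon\mu_n\to\mu_{n+1}(\dots,I,\dots)$ is defined into the limit by sending $\mu_n$ to each $\mu_w$ with $w\in W_{n+1}$. This uses the already-constructed lower $\alpha$'s together with the projections out of $\mu_n$, deleting the $I$ from the parenthesization when it sits inside a parenthesis of length $\geq 3$, and applying a unit map of lower arity otherwise.

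Then we verify the axioms. Both diagrams in \autoref{item:check-alpha} have the form ``project to a vertex, then apply a lower $\alpha$'', and by construction this equals projecting directly to the corresponding doubly-parenthesized vertex. Commutativity therefore reduces to the cone condition of the limit plus the induction hypothesis. The case $n=3$ is given, since $\mu_3$ is not a limit; this is where the hypothesis on low-degree data is used. Connectedness of $W_n$ for $n\geq4$ (\autoref{annex}) is not needed for existence. It is used for uniqueness.

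Terminality goes as follows. Given another structure $(\nu_n,\beta_{n,l,r})$ agreeing for $n\leq3$, we build $t_n\colon\nu_n\to\mu_n$ inductively. The composites $\nu_n\to\nu_w\to\mu_w$, using the $\beta$'s and $t_{<n}$, form a cone by axiom \autoref{item:check-alpha} for $\nu$, hence induce a unique map to the limit. Compatibility with the $\alpha$'s holds by construction, and uniqueness of the cone map gives terminality. Every structure maps to the terminal one by morphisms that are identities for $n\leq3$, so all such structures are equivalent in the sense of \autoref{defn:equi-normal-oplax}. The main obstacle is the unit maps $\alpha_{n,l,0}$: one has to check that the proposed maps into $\mu_{n+1}(\dots,I,\dots)$ form a cone and satisfy the mixed diagrams that contain both $r=0$ and $r\geq2$ arrows. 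I would handle this first for $n=3$ by direct case analysis over $W_4$, then argue by induction, using naturality of the projections.
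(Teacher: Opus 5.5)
Your plan is the paper's proof. The paper likewise takes $\mu_n$ to be the limit over $W_n$ of the parenthesization diagram and $\alpha_{n,l,r}$ for $2\le r\le n-1$ to be the limit projections, and it defines $\alpha_{n-1,l,0}$ vertexwise by deleting the inserted $I$ and applying lower unit maps; it obtains terminality from the cone of $\beta$'s and the universal property of the limit. Two points need tightening: you must also verify the $r=s=0$ diagrams $\alpha_{n-1,k+1,0}\circ\alpha_{n-2,l,0}=\alpha_{n-1,l,0}\circ\alpha_{n-2,k,0}$ (the paper checks these vertexwise in the same way), and connectedness of $W_n$ plays no role in the uniqueness of the comparison map, which is just the limit's universal property; the paper uses connectedness only in the proof of \autoref{mainthm}.
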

\begin{proof}
  We define $\mu_n(\e_1, \cdots , \e_n)$ and $\alpha_{n,l,r}(\e_1 , \cdots , \e_n)$ by induction
  on $n$.

  For $n\geq 4$, suppose that we have defined $\mu_m$, $\alpha_{m,l,r}$ for $m
  \leq n-1$, $1 \leq r \leq m$, and $\alpha_{m,l,0}$ for $m \leq n-2$ satisfying
  \autoref{normaloplax}.
  Then for $\vec{\e}=(\e_1, \cdots , \e_n) \in \sE^n$, there is a diagram
\begin{equation*}
D[\vec\e,\mu]: W_n \to \sE
\end{equation*}
sending each element to a combination of $(\e_1 , \cdots , \e_n)$ using the
corresponding $\mu_2 , \cdots, \mu_{n-1}$ and each arrow to the (compositions of)
suitable $\alpha_{m,l,r}$ ($m \leq n-1$, $r \ge 2$), which are all defined by inductive assumption.
Define
\begin{equation*}
\mu_n(\e_1, \cdots , \e_n) = \mathrm{lim}_{W_n} D[\vec\e,\mu],
\end{equation*}
and $\alpha_{n,l,r}$ for $2 \leq r \leq n-1$ to be the canonical map
$$\mu_n (\e_1 , \cdots , \e_n) \xrightarrow{}
D[\vec\e,\mu](\letter_1 , \cdots , \letter_l ,(\letter_{l+1}, \cdots , \letter_{l+r}), \letter_{l+r+1} , \cdots , \letter_n).$$
Define $\alpha_{n,l,1}$ and $\alpha_{n,0,n}$ to be identities.
For $\alpha_{n-1,l,0}$, consider 
\begin{equation*}
  \vec\e^{\circ}=(\e_1, \cdots, \e_{n-1}) \in \sE^{n-1} \text{ and }
  \vec\e =(\e_1, \cdots, X_l, I, X_{l+1},
  \cdots , \e_{n-1}) \in \sE^{n}.
\end{equation*}
To obtain $\alpha_{n-1,l,0}: \mu_{n-1}(\vec\e^{\circ}) \to \mu_n(\vec\e)$,
it suffices to see that there are canonical maps $\alpha_{n-1,l,0}(w)$ from
$\mu_{n-1}(\vec\e^{\circ})$ to $D[\vec\e](w)$ for each element $w \in W_n$
that are compatible for all $w$. For an element $w$, there are two cases:
\begin{itemize}
\item When there exists a parenthesis of length $n-1$ after deleting
  $\letter_{l+1}$ from $w$. As $n \geq 4$, the only possiblities are 
  $w=(\letter_1,(\letter_2, \cdots, \letter_n))$, $l=0$, or $w=((\letter_1, \cdots,
  \letter_{n-1}), \letter_n)$, $l=n-1$ or $w=(\letter_1, \cdots, (\letter_{l+\epsilon},\letter_{l+\epsilon+1}), \cdots, \letter_n)$, $\epsilon=0 \text{ or }1$.
  We let $\alpha_{n-1,l,0}(w)$ be
  $$\alpha_{1,0,0}: \mu_{n-1}(\vec\e^{\circ}) \to \mu_2(I, \mu_{n-1}(\vec\e^{\circ}))$$ in the
  first case,
  $$\alpha_{1,1,0}: \mu_{n-1}(\vec\e^{\circ}) \to \mu_2(\mu_{n-1}(\vec\e^{\circ}), I)$$
  in the second case, and
  $\mu_{n-1}(\e_1, \cdots, \alpha_{2,\epsilon,0}, \cdots, \e_{n-1}) \circ \alpha_{n-1,l+\epsilon-1,1}$ in the last case.
\item When there are no parenthesis of length $n-1$ after deleting
  $\letter_{l+1}$ from $w$. We obtain an element $w^{\circ} \in W_{n-1}$ by deleting
  $\letter_{l+1}$ and any resulting parathesis of length 1.   Let
  $\alpha_{n-1,l,0}(w)$ be
\begin{equation*}
    \mu_{n-1}(\vec\e^{\circ}) \to  D[\vec\e^{\circ}](w^{\circ}) \to D[\vec\e](w),
\end{equation*}
  where the first map is some $\alpha_{3,l',2}$ when $n=4$ and the canonical map from the limit when $n>4$, and the second map is
  induced by some $\alpha_{n',l',0}$ for $n' \leq n-2$, possibly composed with some
  $\alpha_{n'',l'',1} $ for $n'' \leq n-2$ if there have been a parathesis of length 1 deleted.
\end{itemize}
The compactiblity of the $\alpha_{n-1,l,0}(w)$ across $w$ holds by inductive hypothesis because all $\mu_{m}$
appearing in the diagrams satsifies $m \leq n-1$.

We still need to check the conditions in \autoref{normaloplax}. There are
essentially three cases in \autoref{normaloplax}\autoref{item:check-alpha}:
\begin{enumerate}
\item \label{item:1} $s,r \neq 0$, we should check for $\mu_n(\e_1,\cdots,\e_n)$.
\item \label{item:2} $s=0$, $r \neq 0$, we should check for $\mu_{n-1}(\e_1,\cdots,\e_{n-1})$.
\item \label{item:3} $s=r=0$, we should check for $\mu_{n-2}(\e_1,\cdots,\e_{n-2})$.
\end{enumerate}
The diagrams in case \autoref{item:1} commute by definition of $\mu_n$.
The diagrams in case \autoref{item:2} commute by definition of $\alpha_{n-1,l,0}$.
For case \autoref{item:3}, let
\begin{equation*}
 \vec\e^{\circ}=(\e_1, \cdots, \e_{n-2}) \text{ and }
 \vec\e =(\e_1, \cdots, \e_l,I, \cdots, \e_k, I, 
 \cdots , \e_{n-2}) \in \sE^{n}.
\end{equation*}
We need to show that the two composites $\alpha_{n-1,k+1,0}\circ \alpha_{n-2,l,0}$ and
$\alpha_{n-1,l,0} \circ \alpha_{n-2,k,0}$ give the same map $\mu_{n-2}(\vec\e^{\circ}) \to
\mu_n(\vec\e)$.
For this purpose, it suffices to check that they give the same map $\mu_{n-2}(\vec\e^{\circ}) \to
D[\vec\e,\mu](w)$ for each $w \in W_n$. We may assume that $w = (\letter_1, \cdots, (\letter_i, \cdots,
\letter_j), \cdots, \letter_n)$ has two pairs of
parantheses, as any $w$ receives a map from such an element.
Work similarly as before, we attempt to
delete the letters $\letter_{l+1}$ and $\letter_{k+2}$ whose values are $I$ in $\vec\e$ from
$w$ and there are several cases:
\begin{itemize}
\item When exactly one of $\letter_{l+1}$ and $\letter_{k+2}$ is in the inner
  paranthesis. Without loss of generality we do it for $w=((\letter_1, \cdots, \letter_{l+1}), \cdots,
  \letter_n)$. Then the following diagram commutes by naturality of $\alpha_{n-l-1,k-l,0}$:
\begin{equation*}
  \begin{tikzcd}[nodes={font=\scriptsize}, column sep = -3em]
    \mu_{n-l-1}(\mu_{l}(\e_1, \cdots, \e_l), \e_{l+1}, \cdots, \e_{n-2})
    \ar[dd,"{\alpha_{n-l-1,k-l,0}}"']  \ar[rd, "{\mu_{n-l-1}(\alpha_{l,l,0},\e_{l+1},\cdots,\e_{n-2})}"]\\
    & \mu_{n-l-1}(\mu_{l+1}(\e_1, \cdots, \e_l,I), \e_{l+1}, \cdots, \e_{n-2})
            \ar[dd,"{\alpha_{n-l-1,k-l,0}}"] \\
    \mu_{n-l}(\mu_{l}(\e_1, \cdots, \e_{l}), \e_{l+1}  \cdots, \e_k,I,\cdots \e_{n-2})
    \ar[rd, "{\mu_{n-l}(\alpha_{l,l,0}, \cdots)}"] \\
    & \mu_{n-l}(\mu_{l+1}(\e_1, \cdots, \e_{l},I), \e_{l+1}, \cdots, \e_k,I,\cdots \e_{n-2})
  \end{tikzcd}
\end{equation*}
\item When both of  $\letter_{l+1}$ and $\letter_{k+2}$ are in the inner
  paranthesis, or none of them is in the inner paranthesis.
  Without loss of generality we do it for $w=((\letter_1, \cdots, \letter_{l+1}, \cdots, \letter_{k+2}), \cdots,
  \letter_n)$. Then the following diagram commutes by inductive hypothesis: 
\begin{equation*}
  \begin{tikzcd}[nodes={font=\scriptsize}, column sep = -3em]
    \mu_{n-k-1}(\mu_k(\e_1, \cdots, \e_k), \e_{k+1}, \cdots, \e_{n-2})
    \ar[dd,"{\mu_{n-k-1}(\alpha_{k,k,0},  \cdots)}"']  \ar[rd, "{\mu_{n-k-1}(\alpha_{k,l,0},\cdots)}"]\\
    & \mu_{n-k-1}(\mu_{k+1}(\e_1, \cdots, \e_l,I, \cdots, \e_k), \e_{k+1}, \cdots, \e_{n-2})
            \ar[dd,"{\mu_{n-k-1}(\alpha_{k+1,l,0}, \cdots)}"] \\
    \mu_{n-k-1}(\mu_{k+1}(\e_1, \cdots, \e_k, I), \e_{k+1}  \cdots, \e_{n-2})
    \ar[rd, "{\mu_{n-k-1}(\alpha_{k+1,l,0},  \cdots )}"] \\
    & \mu_{n-k-1}(\mu_{k+2}(\e_1, \cdots, \e_{l},I, \e_{l+1}, \cdots, \e_k,I),\cdots \e_{n-2})
  \end{tikzcd}
\end{equation*}
\end{itemize}

We now check the universal property. Let $(\sE, \nu_n , \beta_{n,l,r}, I)$ be a normal
oplax monoidal structure with $\nu_n=\mu_n$ and $\beta_{n,l,r} = \alpha_{n,l,r}$ for $n\leq 3$.
We show there is a morphism $s_n$ from this normal oplax structure to $(\sE, \mu_n
, \alpha_{n,l,r}, I)$ . When $n \leqslant 3$, $s_n$ is the identity.    
Suppose we have constructed $s_m$ for all $m \leqslant n-1$, these $s_m$ induce natural
morphisms from $D[\vec\e,\nu]$ to $D[\vec\e,\mu]$. Therefore we have natural maps between
their limits. On the other hand, the natural transformations $\beta_{m,l,r}$ for $m
\leq n$ induce compatible maps from $\nu_n(\e_1,\cdots,\e_n)$ to each entry of the diagram
$D[\vec\e,\nu]$, thus to its limit. We set $s_n$ to be the dotted arrow. 
\begin{equation*}
  \begin{tikzcd}
    \nu_n(\e_1,\cdots,\e_n) \ar[r] \ar[d,dotted,"s_n"'] & \mathrm{lim}_{W_n} D[\vec\e,\nu] \ar[d]\\
    \mu_n(\e_1, \cdots , \e_n) \ar[r, equal]&  \mathrm{lim}_{W_n} D[\vec\e,\mu]
  \end{tikzcd}
\end{equation*}
\end{proof}

\begin{proof}[Proof of \autoref{mainthm}]
  By \autoref{thm:complete-mu}, we have a terminal normal oplax monoidal structure
   $(\Lambda^{op}[\sV]_{\sI_0}, \mu_n , \alpha_{n,l,r}, \sI_0)$ extending the datum in \autoref{lowercondition}.
  In particular, by definition there is
   $$\mu_n(\sD_1, \cdots, \sD_n) := \mathrm{lim}_{W_n}D[\vec\sD,\mu],$$
   where limit in $\Lambda^{op}[\sV]_{\sI_0}$ is taken pointwise.
We show that $L: \Lambda^{op}[\sV]_{\sI_0} \to \Lambda^{op}[L\sV]_{\sI_0}$ is a functor of normal oplax monoidal categories.

   As $(\Lambda^{op}[L\sV]_{\sI_0}, \odot_{\Lambda}, \sI_0)$ is a monoidal category, it has a
   normal oplax monoidal structure $(\upsilon_n , \beta_{n,l,r})$ and the arrows in the
   diagram $D[L\vec\sD, \upsilon]$ are all isomorphisms in $\Lambda^{op}[L\sV]$, so are the
   arrows from $\upsilon_n(L\sD_1, \cdots, L\sD_n)$ to each term of the diagram. 
   As $W_n$ is a connected poset by \autoref{annex}, we have
\begin{equation*}
\upsilon_n(L\sD_1, \cdots, L\sD_n) \cong \mathrm{lim}_{W_n} D[L\vec\sD, \upsilon].
\end{equation*}
   
By \autoref{commutewithtensor}, $L$ preserves small limits and colimits.
We have
   $$L( \mu_2(\sD_1, \sD_2)) \cong L\sD_1 \odot_\Lambda L\sD_2 \text{ and } L\mu_3 (\sD_1, \sD_2,
  \sD_3) \cong L\sD_1 \odot_\Lambda L\sD_2 \odot_\Lambda L\sD_3,$$
as well as natural maps
\begin{equation*}
L\mu_n(\sD_1, \cdots, \sD_n) \cong \upsilon_n(L\sD_1, \cdots, L\sD_n) \text{ and } L\alpha_{n,l,r} \cong \beta_{n,l,r}.
\end{equation*}
\end{proof}

\begin{cor}
  $\LA^{op}_{\mI}[\sV] \subset \Lambda^{op}[\sV]_{\sI_0}$ is a sub-normal oplax monoidal category. 
  \end{cor}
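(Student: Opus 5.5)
The plan is to show that the structure maps of the terminal normal oplax structure from \autoref{mainthm} all restrict to $\LA^{op}_{\mI}[\sV]$. Two things must be proved: for unital $\sD_1,\dots,\sD_n$, the sequence $\mu_n(\sD_1,\dots,\sD_n)$ is again unital (its value at $\mathbf{0}$ is $\mI$ and the base map is the identity), and the unit $\sI_1$ lies in the subcategory. Since $\LA^{op}_{\mI}[\sV]$ is a full subcategory of $\Lambda^{op}[\sV]_{\sI_0}$, the natural transformations $\alpha_{n,l,r}$ then restrict automatically. The same holds for the coherence diagrams of \autoref{normaloplax}.

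The unit $\sI_1$ is unital by definition, since $\sI_1(\mathbf{0})=\Lambda(\mathbf{0},\mathbf{1})\otimes\mI=\mI$.

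For $\mu_2$, I would evaluate the coend of \autoref{2case} at $\mathbf{0}$. The factor $\Lambda(\mathbf{0},\mathbf{n_1+\cdots+n_k})$ is a single point for every choice of indices, so
\[(\sD_1\odot_\Lambda\sD_2)(\mathbf{0})=\int^{\bk,\mathbf{n_1},\dots,\mathbf{n_k}}\sD_1(\bk)\otimes\sD_2(\mathbf{n_1})\otimes\cdots\otimes\sD_2(\mathbf{n_k}).\]
For each fixed $\bk$, the coend over the $\mathbf{n_i}\in\Lambda^{op}$ is computed by the initial objects $\mathbf{n_i}=\mathbf{0}$, which are terminal in $\Lambda^{op}$ since $\mathbf{0}$ is initial in $\Lambda$. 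This reduces the expression to $\int^{\bk}\sD_1(\bk)\otimes\sD_2(\mathbf{0})^{\otimes k}$. This step needs no commutation of $\otimes$ with colimits: a coend of a functor that depends contravariantly on $\mathbf{n_i}$ only through a coYoneda factor is a colimit over a category with a terminal object. The reduced coend is $\int^{\bk}\sD_1(\bk)\otimes\mI^{\otimes k}$, with the $\Lambda$-action on the $\mI$ factors trivial because $\sD_2$ is unital. Again by coYoneda, this is $\sD_1(\mathbf{0})=\mI$. The base map is the identity by the observation after \autoref{KellyLA}.

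The same computation handles $\mu_3$ from \autoref{3compose}: at $\mathbf{0}$, first collapse the $\mathbf{j}$ variables, then $\bt$, $\mathbf{n}$ and $\bk$. For $n\ge4$, $\mu_n=\lim_{W_n}D[\vec\sD,\mu]$ is computed pointwise. Inductively every vertex of the diagram is unital, and every arrow $\alpha$ is a map under $\sI_0$, hence the identity of $\mI$ at level $\mathbf{0}$. Since $W_n$ is connected by \autoref{annex}, the limit at $\mathbf{0}$ of this constant diagram is $\mI$, with identity base map.

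The step I expect to be the main obstacle is making the cofinality argument at level $\mathbf{0}$ rigorous without assuming that $\otimes$ commutes with colimits. In particular, I need the iterated coend to be computable as a colimit over the category of elements, which has a terminal object because $\mathbf{0}$ is initial in $\Lambda$. I would phrase this by writing the whole coend at $\mathbf{0}$ as a single colimit over the twisted arrow category of $\Lambda^{op}\times(\Lambda^{op})^k$. I would then exhibit the object with all $\mathbf{n_i}=\mathbf{0}$ and $\bk=\mathbf{0}$, equipped with the unique maps, as terminal in the relevant indexing category, so the colimit is just evaluation there.
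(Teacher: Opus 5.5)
Your proposal is correct and takes the same approach as the paper, whose proof is only the one-line remark that $\LA^{op}_{\mI}[\sV]$ is closed under the $\mu_n$ ``by inspection''; your level-$\mathbf{0}$ computation is exactly what that inspection leaves implicit: collapsing the $\mathbf{n_i}$ because $\mathbf{0}$ is terminal in $\Lambda^{op}$, co-Yoneda in $\bk$, and connectedness of $W_n$ for the limits with $n\ge 4$, together with your checks that $\sI_1$ is unital and that the subcategory is full. One caution: the iterated argument you already give is the rigorous one, whereas the twisted-arrow colimit in your last paragraph fails as stated, because the twisted arrow category of $\Lambda$ has no terminal object (there are no maps $\bk\to\mathbf{0}$ in $\Lambda$ for $k>0$).
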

  \begin{proof}
    By inspection, the category $\LA^{op}_{\mI}[\sV]$ is closed
    under operators $\mu_n$.    
  \end{proof}

\subsection{Unital operads as monoids}
We are now ready to proof the following theorem, generalizing \cite[Theorem 0.13]{MZZ}.
\begin{thm}
\label{thm:operad} Let $\sV$ be a complete and cocomplete symmetric monoidal
category.
Let $\LA^{op}[\sV]_{\mI_0}$ and $\LA^{op}_{\mI}[\sV]$ be
equipped with the normal oplax monoidal structures defined in
\autoref{mainthm}. There are isomorphisms of categories 
\begin{align*}
  \{\text{based operads in } \sV\} & \cong \{\text{monoids in }\LA^{op}[\sV]_{\sI_0}\};\\
  \{\text{unital operads in } \sV\} & \cong \{\text{monoids in }\LA^{op}_{\mI}[\sV]\}.
\end{align*}
\end{thm}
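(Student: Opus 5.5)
By \autoref{lem:Ching}, a monoid in the normal oplax monoidal category $(\LA^{op}[\sV]_{\sI_0},\mu_n,\alpha_{n,l,r},\sI_1)$ of \autoref{mainthm} is the same as an object $\sC$ with a unit $m_0\colon \sI_1\to\sC$ and a product $m_2\colon \sC\odot_\Lambda\sC\to\sC$ satisfying the associativity law through $\mu_3$, $\alpha_{3,0,2}=s_1$, $\alpha_{3,1,2}=s_2$, and the two unit laws through $\alpha_{1,0,0},\alpha_{1,1,0}$. All these data live in levels $n\le 3$, which are given by the explicit coends of \autoref{2case} and \autoref{3compose}. So the plan is to unpack these data levelwise and compare them directly with May's definition of a based operad, as in the proof of \cite[Theorem 0.13]{MZZ}. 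The difference is that we never commute $\otimes$ with colimits. We only use the universal property of coends of the form $\int \Lambda(-,\cdot)\otimes(\cdots)$, which is always valid.

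First, a map $m_2\colon \sC\odot_\Lambda\sC\to\sC$ out of the coend of \autoref{2case} is the same as a family
\[ \gamma_\lambda\colon \sC(\bk)\otimes\sC(\mathbf{n_1})\otimes\cdots\otimes\sC(\mathbf{n_k})\to\sC(\bn),\qquad \lambda\in\Lambda(\bn,\mathbf{n_1+\cdots+n_k}), \]
natural in all variables. This is the Yoneda form of maps out of $\Lambda(-,\cdot)\otimes X$ with $X$ a tensor of objects of $\sV$; it needs only the universal property of coproducts and coends, not distributivity. Naturality in $\lambda$ says $\gamma_\lambda=\sC(\lambda)\circ\gamma_{\mathrm{id}}$. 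Naturality in the $\mathbf{n_i}$ gives equivariance and compatibility of $\gamma$ with the $\Lambda$-structure maps in the inputs. Naturality in $\bk$ along $\sigma_i$, which is built in via $\eta\colon \mI\to\sC(\mathbf 0)$ as in the functoriality of $(\sD_1\sm\sD_2)_\bk$, gives the based compatibility: inserting the base point $\eta$ in slot $i$ agrees with restricting along $\sigma_i$. These are exactly conditions \autoref{condition1} and \autoref{condition2} from the proof of \autoref{pullback}. With the structure maps $\gamma=\gamma_{\mathrm{id}}$, they recover the equivariance and base point axioms of a based operad, and conversely. Similarly, $m_0\colon\sI_1\to\sC$ in $\LA^{op}[\sV]_{\sI_0}$ is just a map $\mI\to\sC(\mathbf 1)$, since $\sI_1(\mathbf 0)=\mI$ is forced to map by $\eta$.

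Next, the unit laws. Precomposing $m_2\circ(m_0\odot\mathrm{id})$ with $\alpha_{1,0,0}$ yields, on the summand $\bk=\mathbf 1$, $\lambda=\mathrm{id}$, the map $\sC(\bn)\cong\mI\otimes\sC(\bn)\to\sC(\mathbf 1)\otimes\sC(\bn)\to\sC(\bn)$. So the left unit law is exactly the operadic left unit axiom, and likewise on the right. For associativity, both composites in \autoref{lem:Ching}(1) are maps out of the coend $\mu_3(\sC,\sC,\sC)$. Hence they agree if and only if they agree on each generating piece $\sC(\bk)\otimes\sC(\mathbf{n_1})\otimes\cdots\otimes\sC(\mathbf{j_t})$, indexed by $\lambda\in\Lambda(-,\mathbf{j_1+\cdots+j_t})$ and $\lambda'\in\Lambda(\mathbf t,\mathbf{n_1+\cdots+n_k})$. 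By the naturality obtained above we may reduce to $\lambda=\mathrm{id}$ and $\lambda'=\mathrm{id}$, with $t=\sum n_i$. Tracing $s_1$ through the definition gives $\gamma(\gamma(c;d_1,\dots,d_k);e_1,\dots,e_t)$. Tracing $s_2$ through the isomorphisms \autoref{eq:7} and \autoref{eq:6} and the coproduct/coend comparison maps gives $\gamma(c;\gamma(d_1;e_{1,\ast}),\dots,\gamma(d_k;e_{k,\ast}))$ composed with the reindexing permutation. This is precisely May's associativity axiom.

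The main obstacle is this last verification: checking carefully that $s_2$, which is defined through the isomorphisms \autoref{eq:6} and \autoref{eq:7} and two non-invertible comparison maps, restricts on each generating piece to the expected nested composite with the correct block permutation and the correct base-point identifications for $\bt<\sum n_i$. I would handle it by working summand-by-summand, using that the comparison maps are the canonical maps from coproducts or coends of tensors, so they commute with the coprojections. Because all isomorphisms and comparison maps are natural, the resulting correspondence is bijective on objects and on morphisms, which gives the first isomorphism of categories. For unital operads, the category $\LA^{op}_{\mI}[\sV]$ is a full sub-normal-oplax category closed under $\mu_n$ by the preceding corollary, and the extra condition $\sC(\mathbf 0)=\mI$ matches on both sides, so the second isomorphism follows by restriction.
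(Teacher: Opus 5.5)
Your argument is correct, but it takes a different route from the paper's.

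\emph{The paper's route.} The paper never evaluates $s_2$ on pieces. It notes that a monoid visibly gives a based operad. For the converse it applies the localization $L\colon\sV\to L\sV$ of \autoref{commutewithtensor}, in which $\odot_\Lambda$ is a genuine monoidal product, so $L\sC$ is a monoid by \cite{MZZ}. It then pulls the diagrams of \autoref{lem:Ching} back to $\sV$, using the compatibility of $L$ with $\mu_2,\mu_3$ and the $\alpha$'s from \autoref{mainthm} together with faithfulness of $L$.

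This is short and reuses the closed case wholesale. However, it depends on all of \autoref{sec:mono-local}, and in particular on faithfulness of $L$, which is not automatic for a Gabriel--Zisman localization. Indeed, $L$ identifies $f$ and $g$ as soon as $t\circ f=t\circ g$ for some $t\in\overline{S}$. For example, take $\sV=\mathrm{Top}^{op}$. The element of $S$ attached to a two-object discrete $I$ with $A_1=A_2=\mI$ is the opposite of the diagonal $X\to X\times X$. Inverting it identifies the two projections $X\times X\to X$, hence all parallel maps.

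\emph{Your route.} You stay inside $\sV$ and use only that maps out of the coends defining $\mu_2$ and $\mu_3$ are determined on coprojections, so you need no localization at all. The price is the bookkeeping you flag for $s_2$. That bookkeeping is routine, since every map in \autoref{eq:7}, \autoref{eq:6} and the two comparison maps is defined by a universal property and commutes with coprojections.

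\emph{Two points to make explicit.} For bijectivity on objects, state that the given $\Lambda$-structure of a monoid coincides with the one derived from $\gamma$, $\eta$ and $u=m_0$. To see this, apply the wedge condition in $\bk$ and then the right unit law to $\gamma(c;u,\dots,\eta,\dots,u)$. Also note that $m_0$ being a map under $\sI_0$ imposes $\sC(\sigma_1)\circ m_0=\eta$. This is not free; for a based operad it holds by the left unit axiom.
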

\begin{proof}
  The monoidal localization $L: \sV \to L\sV$ preserves and reflects
  monoids. Indeed, as $L$ is faithful, if the structure maps  $Lm_2 $ and $Lm_0$
  satisfy the diagrams in \autoref{lem:Ching}, so do $m_2$ and $m_0$.
  
It is easy to see that given a monoid in $\LA^{op}[\sV]_{\mI_0}$ (or
respectively, $\LA^{op}_{\mI}[\sV]$), the structure maps $m_0 $ and $m_2$
provide the structure maps of a based (respectively a unital) operad on
$\sV$. Conversely, suppose that we are given a based (unital) operad $\sC$ in
$\sV$. As $L$ is symmetric monoidal and preserves colimits, $L\sC$ is an operad
in $L\sV$, thus a monoid in $\LA^{op}[L\sV]_{\sI_0}$ (respectively
$\LA^{op}_{\mI}[L\sV]$). This proves that $\sC$ is a monoid in $\LA^{op}[\sV]_{\sI_0}$ (respectively
$\LA^{op}_{\mI}[\sV]$).
\end{proof}

\begin{rem} \label{rem:all-same}
 As we see in \autoref{mainthm}, all normal oplax monoidal structures
 satisfying the requirements are equivalent in the sense of
 \autoref{defn:equi-normal-oplax}. Consequently,
 the category of operads is isomorphic to the category of monoids in
 $\Lambda$-sequences equipped with \emph{any} normal oplax monoidal structure by
 \autoref{cor:equi-monoid}.
\end{rem}

\begin{rem}
  Ching \cite{ChingComposition} constructed a normal oplax monoidal structure
  on $\Sigma$-sequences, denoted $(\Sigma^{op}[\sV], \mu_n,  \alpha_{n,l,r}, \sI_1^{\Sigma})$ here,
  to prove an analogue of \autoref{thm:operad} identifying operads in $\sV$ to
  monoids in $\Sigma^{op}[\sV]$. Our constructions in this section can be applied to $\Sigma$-sequences similarly
  to obtain another normal oplax monoidal structure $(\Sigma^{op}[\sV], \nu_n, \beta_{n,l,r}, \sI_1^{\Sigma})$.

Ching's structure has a morphism to our structure, since it satisfies the conditions in
 \autoref{mainthm}. We do not know whether they are isomorphic.
\end{rem}

\bibliographystyle{plain}
\bibliography{references}

@article {MZZ,
    AUTHOR = {May, J. P. and Zhang, Ruoqi and Zou, Foling},
     TITLE = {Unital operads, monoids, monads, and bar constructions},
   JOURNAL = {Adv. Math.},
  FJOURNAL = {Advances in Mathematics},
    VOLUME = {461},
      YEAR = {2025},
     PAGES = {Paper No. 110065, 55},
      ISSN = {0001-8708,1090-2082},
   MRCLASS = {18M60 (18M05 55P48)},
  MRNUMBER = {4837126},
       DOI = {10.1016/j.aim.2024.110065},
       URL = {https://doi.org/10.1016/j.aim.2024.110065},
}

@book {GZCalculus,
    AUTHOR = {Gabriel, P. and Zisman, M.},
     TITLE = {Calculus of fractions and homotopy theory},
    SERIES = {Ergebnisse der Mathematik und ihrer Grenzgebiete [Results in
              Mathematics and Related Areas]},
    VOLUME = {Band 35},
 PUBLISHER = {Springer-Verlag New York, Inc., New York},
      YEAR = {1967},
     PAGES = {x+168},
   MRCLASS = {55.40 (18.00)},
  MRNUMBER = {210125},
MRREVIEWER = {A.\ K.\ Bousfield},
}

@book {lurie2009higher,
  title={Higher topos theory},
  author={Lurie, Jacob},
  year={2009},
  publisher={Princeton University Press}
}

@incollection {Krause,
    AUTHOR = {Krause, Henning},
     TITLE = {Localization theory for triangulated categories},
 BOOKTITLE = {Triangulated categories},
    SERIES = {London Math. Soc. Lecture Note Ser.},
    VOLUME = {375},
     PAGES = {161--235},
 PUBLISHER = {Cambridge Univ. Press, Cambridge},
      YEAR = {2010},
      ISBN = {978-0-521-74431-7},
   MRCLASS = {18E35 (18E30)},
  MRNUMBER = {2681709},
MRREVIEWER = {Jue\ Le},
       DOI = {10.1017/CBO9781139107075.005},
       URL = {https://doi.org/10.1017/CBO9781139107075.005},
}

@book {mac1998categories,
  title={Categories for the working mathematician},
  author={Mac Lane, Saunders},
  volume={5},
  year={1998},
  publisher={Springer Science \& Business Media}
}

@incollection {And,
    AUTHOR = {Anderson, D. W.},
     TITLE = {Chain functors and homology theories},
 BOOKTITLE = {Symposium on {A}lgebraic {T}opology ({B}attelle {S}eattle
              {R}es. {C}enter, {S}eattle, {W}ash., 1971)},
     PAGES = {1--12. Lecture Notes in Math., Vol. 249},
 PUBLISHER = {Springer, Berlin},
      YEAR = {1971},
   MRCLASS = {55B20},
  MRNUMBER = {0339132}}

@article {BMAx,
    AUTHOR = {Berger, Clemens and Moerdijk, Ieke},
     TITLE = {Axiomatic homotopy theory for operads},
   JOURNAL = {Comment. Math. Helv.},
  FJOURNAL = {Commentarii Mathematici Helvetici},
    VOLUME = {78},
      YEAR = {2003},
    NUMBER = {4},
     PAGES = {805--831},
      ISSN = {0010-2571},
   MRCLASS = {18D50 (18G55 55P48 55U35)},
  MRNUMBER = {2016697},
MRREVIEWER = {David Chataur},
       DOI = {10.1007/s00014-003-0772-y},
       URL = {https://doi.org/10.1007/s00014-003-0772-y},
}

@article {BergMoerOp2,
    AUTHOR = {Berger, Clemens and Moerdijk, Ieke},
     TITLE = {The {B}oardman-{V}ogt resolution of operads in monoidal model
              categories},
   JOURNAL = {Topology},
  FJOURNAL = {Topology. An International Journal of Mathematics},
    VOLUME = {45},
      YEAR = {2006},
    NUMBER = {5},
     PAGES = {807--849},
      ISSN = {0040-9383},
   MRCLASS = {18D50 (18G55 55P48 55U35)},
  MRNUMBER = {2248514},
MRREVIEWER = {Agust\'{i} Roig},
       DOI = {10.1016/j.top.2006.05.001},
       URL = {https://doi.org/10.1016/j.top.2006.05.001},
}

@article {ChingComposition,
    AUTHOR = {Ching, Michael},
     TITLE = {A note on the composition product of symmetric sequences},
   JOURNAL = {J. Homotopy Relat. Struct.},
  FJOURNAL = {Journal of Homotopy and Related Structures},
    VOLUME = {7},
      YEAR = {2012},
    NUMBER = {2},
     PAGES = {237--254},
      ISSN = {2193-8407,1512-2891},
   MRCLASS = {18D50 (18D10)},
  MRNUMBER = {2988948},
MRREVIEWER = {Sara\ Madariaga},
       DOI = {10.1007/s40062-012-0007-2},
       URL = {https://doi.org/10.1007/s40062-012-0007-2},
}

@article{day1973note,
  title={Note on monoidal localisation},
  author={Day, Brian},
  journal={Bulletin of the Australian Mathematical Society},
  volume={8},
  number={1},
  pages={1--16},
  year={1973},
  publisher={Cambridge University Press}
}

@book {Fresse0,
    AUTHOR = {Fresse, Benoit},
     TITLE = {Modules over operads and functors},
    SERIES = {Lecture Notes in Mathematics},
    VOLUME = {1967},
 PUBLISHER = {Springer-Verlag, Berlin},
      YEAR = {2009},
     PAGES = {x+308},
      ISBN = {978-3-540-89055-3},
   MRCLASS = {18D50 (18G50 55P48 57T30)},
  MRNUMBER = {2494775},
MRREVIEWER = {Paul G. Goerss},
       DOI = {10.1007/978-3-540-89056-0},
       URL = {https://doi.org/10.1007/978-3-540-89056-0},
}

@book{gabriel2012calculus,
  title={Calculus of fractions and homotopy theory},
  author={Gabriel, Peter and Zisman, Michel},
  volume={35},
  year={2012},
  publisher={Springer Science \& Business Media}}

@article {Kelly0,
    AUTHOR = {Kelly, G. M.},
     TITLE = {On the operads of {J}. {P}. {M}ay},
   JOURNAL = {Repr. Theory Appl. Categ.},
  FJOURNAL = {Reprints in Theory and Applications of Categories},
    NUMBER = {13},
      YEAR = {2005},
     PAGES = {1--13},
   MRCLASS = {18D50 (18C15 18D10 18D20)},
  MRNUMBER = {2177746},
MRREVIEWER = {Agust\'{i} Roig},
}

@incollection {Kelly,
    AUTHOR = {Kelly, G. M.},
     TITLE = {Coherence theorems for lax algebras and for distributive laws},
 BOOKTITLE = {Category {S}eminar ({P}roc. {S}em., {S}ydney, 1972/1973)},
     PAGES = {281--375. Lecture Notes in Math., Vol. 420},
 PUBLISHER = {Springer, Berlin},
      YEAR = {1974},
   MRCLASS = {18D05},
  MRNUMBER = {0364394}}

@article {Kro,
    AUTHOR = {Kro, Tore August},
     TITLE = {Model structure on operads in orthogonal spectra},
   JOURNAL = {Homology Homotopy Appl.},
  FJOURNAL = {Homology, Homotopy and Applications},
    VOLUME = {9},
      YEAR = {2007},
    NUMBER = {2},
     PAGES = {397--412},
      ISSN = {1532-0073},
   MRCLASS = {18D50 (55P42 55P48 55U35)},
  MRNUMBER = {2366955},
       URL = {http://projecteuclid.org/euclid.hha/1201127343}}

@book{MayGeo,
	Address = {Berlin},
	Author = {May, J. P.},
	Mrclass = {55D35},
	Mrnumber = {0420610 (54 \#8623b)},
	Mrreviewer = {J. Stasheff},
	Note = {Lectures Notes in Mathematics, Vol. 271},
	Pages = {viii+175},
	Publisher = {Springer-Verlag},
	Title = {The geometry of iterated loop spaces},
	Year = {1972}}

@incollection{Rant2,
	Author = {May, J. P.},
	Booktitle = {New topological contexts for {G}alois theory and algebraic geometry ({BIRS} 2008)},
	Mrclass = {18C20 (18D10 18D50 19D23 55P48)},
	Mrnumber = {MR2544392},
	Pages = {283--330},
	Publisher = {Geom. Topol. Publ., Coventry},
	Series = {Geom. Topol. Monogr.},
	Title = {The construction of {$E\sb \infty$} ring spaces from bipermutative categories},
	Volume = {16},
	Year = {2009}}

@article{MT,
	Author = {May, J. P. and Thomason, R.},
	Coden = {TPLGAF},
	Doi = {10.1016/0040-9383(78)90026-5},
	Fjournal = {Topology. An International Journal of Mathematics},
	Issn = {0040-9383},
	Journal = {Topology},
	Mrclass = {55P42 (55N22 55P35)},
	Mrnumber = {508885 (80g:55015)},
	Mrreviewer = {J. F. Adams},
	Number = {3},
	Pages = {205--224},
	Title = {The uniqueness of infinite loop space machines},
	Url = {http://dx.doi.org.proxy.uchicago.edu/10.1016/0040-9383(78)90026-5},
	Volume = {17},
	Year = {1978}}

@article{PS,
Author ={Pavlov, Dmitri and Scholbach, Jakob},
Title = {Admissibility and rectification of colored symmetric operads},
Journal = {Journal of Topology}, 
Volume = {11},
Year = {2018},
pages = {559-601}}

\end{document}